\newcommand{\pic}{{\rm Pic(Vect)}}
\newcommand{\vect}{{\rm Vect}}
\newcommand{\aut}{{\rm Aut(Vect)}}
\newcommand{\id}{{\rm id}}
\newtheorem{theorem}{Theorem}[section]
\newtheorem{proposition}[theorem]{Proposition}
\newtheorem{lemma}[theorem]{Lemma}
\theoremstyle{theorem}
\newtheorem{definition}[theorem]{Definition}
\theoremstyle{definition}
\newtheorem{example}[theorem]{Example}
\theoremstyle{remark}
\newtheorem{remark}[theorem]{Remark}
\theoremstyle{remark}
\newtheorem{conjecture}[theorem]{Conjecture}
\numberwithin{equation}{section}
\title{Boundary Conditions for Topological Quantum Field Theories, Anomalies and Projective Modular Functors}
\author{Domenico Fiorenza}
\address{Dipartimento di Matematica,``La Sapienza'' Universit\`a di Roma, P.le Aldo Moro, 5, Roma, Italy}
\author{Alessandro Valentino}
\address{Fachbereich Mathematik, Universit\"at Hamburg, Bundesstrasse 55, 20146 Hamburg, Germany}
\begin{document}

\maketitle
\begin{abstract}
We study boundary conditions for extended topological quantum field theories (TQFTs) and their relation to topological anomalies. We introduce the notion of TQFTs with moduli level $m$, and describe extended anomalous theories as natural transformations of invertible field theories of this type. We show how in such a framework anomalous theories give rise naturally to homotopy fixed points for $n$-characters on $\infty$-groups. By using dimensional reduction on manifolds with boundaries, we show how boundary conditions for $n+1$-dimensional TQFTs produce $n$-dimensional anomalous field theories. Finally, we analyse the case of fully extended TQFTs, and show that any fully extended anomalous theory produces a suitable boundary condition for the anomaly field theory.  
\end{abstract}

\begin{flushright}
\IfFileExists{CJKutf8.sty}  
 {\begin{CJK*}{UTF8}{gbsn}
天下大乱，形势大好 
\end{CJK*}
}{\emph{``Great it is the confusion under the sky, the situation is excellent.''}\\
}

\end{flushright}
\setcounter{tocdepth}{1}
\tableofcontents
%%

%%Intro
\section{Introduction}
In recent years, the study of boundary conditions for topological quantum field theories (TQFTs) has attracted much interest, both in the physics and mathematics literature; see for instance \cite{ks1,fsv1,fsv2,cmr,nuiten,fss,kap,kt,wgw}, among others. Namely, given an $n$-dimensional TQFT, from the mathematical point of view it is a sensible question to ask when does such a theory produce genuine numerical invariants of an $n$-dimensional manifold with boundary, rather than vectors in a state space associated to it. This is possible if we can regard the boundary not as arising from a ``cut-and-paste'' procedure implementing locality, but rather as a ``constrained'' part of the manifold. 
In general, there will be obstructions in extending a TQFT to manifolds with boundaries: the case of Reshetikhin-Turaev and Turaev-Viro TQFTs has been recently investigated in \cite{fsv1}. Both Reshetikhin-Turaev \cite{rt} and Turaev-Viro \cite{tuvir} TQFTs are extendeded topological field theories, namely these theories assign data also to manifolds of codimension 2. In the present work, we focus our attention on TQFTs that are extended down to codimension $k$, and at the same time, most importantly, extended \emph{up to infinity} to include diffeomorphisms, and their isotopies. This is the framework pioneered in \cite{lurie}, which makes extensive use of the language of $\infty$-categories, and which we find particularly suitable for our aims. Indeed, by regarding $n$-categories as $\infty$-categories, we can introduce the notion of a \emph{TQFT with moduli level $m$}: these are topological field theories which also detect information about the homotopy type of the diffeomorphisms group of manifolds up to a certain level $m$.\\
Our main motivation to introduce and study such field theories is due to the fact that they provide a very natural and elegant description of \emph{anomalous} TQFTs. It is well known, for instance, that the Reshetikhin-Turaev construction produces from a modular tensor category $\mathcal{C}$ a TQFT which is defined on a central extension of the extended 3-dimensional cobordism category \cite{walker}: namely, it gives rise only to a \emph{projective} representation of the 2-tier extended cobordism category ${\rm Cob}_{2}(3)$ taking values in $2\mbox{-}{\rm Vect}$, and the anomaly, in this context, is represented via a 2-cocycle on the modular groupoid \cite{turaev,bakalovkirillov,au,au2}. In a more modern approach, (topological) anomalies are themselves field theories in higher dimensions, and of a special kind, namely they are \emph{invertible}; anomalous TQFTs are then realised as \emph{truncated morphisms} from the trivial theory $\underline{1}$ to the given anomaly. We refer the reader to very recent works \cite{freed1,freed2} detailing this point of view. In the present work, we realise the anomaly theory as an invertible TQFT of moduli level 1 of the \emph{same} dimension as the anomalous TQFT. Namely, taking the higher morphisms into account there is no need for the involved TQFTs to be truncations of TQFTs defined in one dimension higher; rather, truncated TQFTs are a very particular example of moduli level 1 TQFTs. This provides a unified language to describe anomalous theories extended down to codimension $k$, and their category: given an anomaly theory $W$, it is the $(\infty, k-1)$-category of natural transformations between the trivial theory and $W$. Moreover, this description allows for more general anomaly theories, as explained in the text, and it has a strong representation theoretic flavour: anomalous $n$-dimensional TQFTs extended down to codimension $k$ give rise to homotopy fixed points for $k+1$-characters, a suitable and natural generalisation of group characters to the setting of $\infty$-groups. In codimension 1, these provide projective representations of the mapping class group of $n-1$-closed manifolds.\\ \\
Anomalous TQFTs and boundary conditions are expected to intertwine in a subtle relationship. The most striking example is provided by Chern-Simons theory, which should best be regarded as a field theory living on the boundary of a 4-dimensional TQFT \cite{fhlt,walker,witten}. Similarly, the Reshetikhin-Turaev theory arising from a modular tensor category $\mathcal{C}$ is induced by a 4-dimensional Crane-Yetter theory \cite{craneyetter,walker}. By basically using a \emph{dimensional reduction} procedure, we show that from a boundary condition of an (invertible) $n+1$-dimensional theory $Z$ one can obtain an anomalous TQFT, where the anomaly is induced by $Z$ itself. One sensible question to ask concerns the converse statement, i.e. the possibility of producing a boundary condition for an $n+1$-dimensional theory from the datum of an anomalous TQFT. In general, we do not expect this to hold: indeed, an anomalous TQFT with anomaly $W$ contains too little information to determine a boundary condition $\tilde{Z}$. Neverthless, when $Z$ is a fully extended theory the situation is much more amenable to treatment: via the cobordism hypothesis \emph{for manifolds with singularities}, we show that anomalous TQFTs with anomaly given by a fully extended TQFT $Z$ do indeed produce boundary conditions for $Z$. In other words, in the fully extended situation, ``truncated morphisms'' of TQFTs are just a shadow of something richer, namely TQFTs with genuine boundary conditions. This is particularly clear thanks to the formalism used to describe anomalies, namely as morphisms of TQFTs of moduli level 1.\\
The present work is organised as follows.
\\
\\
In Section \ref{infinity} we present a very gentle introduction to the language of $\infty$-categories, in the amount necessary to allow the reader acquainted with category theory to follow the rest of the paper. We also include some results we were not able to retrieve from the literature.\\
In Section \ref{cobordism} we give some basic notions concerning cobordism categories, with emphasis on properties available once we consider extension ``up to infinity''.\\
In Section \ref{tqft} we introduce the notion of an extended TQFT with moduli level $m$, and provide some examples; we show also how we recover ordinary extended TQFTs. The fully extended case is discussed as well in this section.\\
In Section \ref{sec.anomaly}, we introduce anomalies and anomalous TQFTs via the language developed in Section \ref{tqft}. For consistency, we also discuss invertible theories, and some properties of the Picard groupoid of $n$-vector spaces.\\
In Section \ref{twochar} we take a little detour to introduce $n$-characters and their homotopy fixed points, which is a subject in its own. We present the basic definitions and results needed to provide a description of anomalous TQFTs as homotopy fixed points, and we show how anomalous $n$-dimensional TQFTs in codimension 1 give rise to projective representations of the mapping class group of closed $n-1$-dimensional manifolds, hence to projective modular functors. 
%Most of the results in this section can be given in full generality, i.e. for arbitrary symmetric monoidal $(\infty, n)$-categories: we prefer not to do so, in order to keep the attention on their application to the case at hand.
\\
In Section \ref{boundary} we finally introduce boundary condition for TQFTs, providing examples in the simplest situations, and comparisons with the existing literature when needed. \\
In Section \ref{boundanom} we show how boundary conditions for invertible TQFTs give rise to anomalous theories. Moreover, we show that in the fully extended case also the contrary holds. We conclude with some remarks on recent results on 4-dimensional field theories arising from modular tensor categories.\\ \\
Not to burden the present work with technicalities of Higher Category theory, we have in several places appealed to intuition, and hence have preferred to give ``sketches'' of definitions, rather than full blown ones. We do feel the need then to be clearer concerning which aspects of our results should be regarded as rigorously established, and which ones still require a solid foundation, or at least technical details to be filled in. In the following we try to concisely state which tools we require: most of them are contained in \cite{lurie}, which, though lacking some amount of rigor in certain points, has had a wide influence in the study of TQFTs, in particular concerning their classification. See, for instance, \cite{fhlt}.\\
First, for any nonnegative integer $n$ and any group homomorphism $G\to O(n)$ we assume there exists a symmetric monoidal $(\infty,n)$-category $\mathrm{Bord}(n)^G$ of $G$-framed cobordism. Next, for any nonnegative integer $n$, we assume there exists a notion of a symmetric monoidal $n$-category $n\text{-}\mathrm{Vect}$ of $n$-vector spaces over a field $\mathbb{K}$, which, for $n=1$, reproduces the usual monoidal category of vector spaces over $\mathbb{K}$. Moreover, we require a natural equivalence of symmetric monoidal $(\infty,n-1)$-categories $\Omega(n\text{-}\mathrm{Vect})\cong (n-1)\text{-}\mathrm{Vect}$. In the last part of the present work, we assume also the cobordism hypothesis to hold, namely that a symmetric monoidal functor $Z\colon \mathrm{Bord}(n)^G\to n\text{-}\mathrm{Vect}$ is completely determined by its value on the $G$-framed point, and that this value can be any $G$-invariant fully dualizable object of $n\text{-}\mathrm{Vect}$. Finally, we assume a robust notion of lax natural transformations between strong monoidal $\infty$-functors between symmetric $(\infty, n)$-categories.
All the other results in the article are mathematically derived by these assumptions, and so they should be considered as mathematically established as soon as one is confident in assuming that in any rigorous foundation of the theory of symmetric monoidal $(\infty,n)$-categories, all of the above assumpions will have to be true. This is a widely expected to be so in the extended TQFTs/Higher Categories communities.\\
Neverthless, for $n\leq 2$ all the constructions we present here can be entirely reformulated using the language of ordinary categories, or the well established language of 2-categories and bicategories (see, e.g., \cite{benabou}). Indeed, the reader which is uncomfortable with the theory of $\infty$-category \emph{tout court} can safely substitute $k$ and $m$ in the paper with 1, and only have to deal with bicategories for the $(n\leq2)$-version of the results presented here. In particular, the main results of this article, i.e., the construction of projective representations of the mapping class groups of manifolds from anomalous TQFTs, and that boundary conditions for extended (invertible) TQFTs do produce anomalous topological theories can be both entirely expressed within a bicategorical language.
On the other hand, we have preferred to use the language of $\infty$-category because the naturality of the ideas contained in the present work become visibly clearer. Moreover, it allows us to ``see far'' in the landscape of topological quantum field theories, and permits  indeed interesting speculations, like the conjectural relation between Reshetikhin-Turaev anomalous 3d TQFT, and the 4-category $\mathrm{Braid}^\otimes$ we present in the final part of the article. These could certainly be seen as additional motivations to pursue the consolidation of the foundation of $\infty$-category theory in all its aspects. \\ \\
{\bf Acknowledgments.} The authors would like to thank Christian Blohmann and Peter Teichner for the invitation to visit Max-Planck-Institut f\"ur Mathematik in Bonn during April 2013, where the main bulk of this work has originated. Moreover, they would like to express gratitude to Joost Nuijten and Urs Schreiber for repeated interesting discussions, and to the referee for very useful comments on a first draft of this article. AV would like to thank Alexander Barvels, Nicolai Reshetikhin, Christoph Schweigert, Kevin Walker, and Christoph Wockel for useful discussions and suggestions. The work of AV is partly supported by the Collaborative Research Center 676 ``Particle, Strings and the Early Universe''. DF would like to thank the organisers of GAP XI - Pittsburgh, and Stephan Stolz for useful discussions and suggestions. 
%%
%%Section 2
%%
\section{Preliminary notions on higher category theory}\label{infinity}
In this section we will collect relevant results concerning higher category theory, and in particular $\infty$-categories, which we will use in the paper, mainly following \cite{lurie,baezdolan}, to which we direct the reader for details. The experienced reader, instead, can skip this section altogether.\\
An $n$-category can be informally thought of as a mathematical structure generalizing the notion of a category: we not only have objects and morphisms, but also morphisms between morphisms, morphisms between morphisms between morphisms, and so on, up to $n$. In the case $n=2$, a precise definition can be given (see, e.g.,\cite{benabou,schom}), where the crucial difference arises between \emph{strict} and \emph{weak} $2$-category. Once we notice that a strict 2-category is equivalent to a category enriched in ${\rm Cat}$, we can give a recursive definition for strict $n$-categories as follows:
%\begin{definition}
%For
for $n\geq{2}$, a strict $n$-category is a category enriched in ${\rm Cat}^{n-1}$, the category of strict $n-1$-categories.
%\end{definition} 
 %%
 The problem arises when we try to extend the above definition to obtain weak $n$-categories, i.e. an $n$-category where associativity for $k$-morphisms, etc. is only preserved up to $k+1$-morphisms, for $1\leq k\leq{n}$, which obey the necessary coherence diagrams. A rigorous definition of weak $n$-category can nevertheless be given, and there are even different equivalent ways of formalizing this notion. Basic references are \cite{barwick,barwick-schommer-pries}. It goes without saying that weak $n$-categories are those of relevance in the mathematical world.
 \begin{example}
An important example of (weak) $n$-category is that of $n$-vector spaces over a fixed characteristic 0 base field $\mathbb{K}$. For $n=0$, the $0$-category (i.e., the set) $0$-Vect is the field $\mathbb{K}$; for $n=1$ the $1$-category (i.e., the ordinary category) $1$-Vect is the category of (finite dimensional) vector spaces over $\mathbb{K}$. For $n=2$, the $2$-category $2$-Vect comes in various flavours: by 2-Vect one can mean the 2-category of Kapranov-Voevodsky 2-vector spaces \cite{kv}, or the 2-category of (finite) $\mathbb{K}$-linear categories with linear functors as morphisms and $\mathbb{K}$-linear natural transformations as 2-morphisms, or the 2-category of (finite dimensional) $\mathbb{K}$-algebras (to be thought as placeholders for their categories of right modules), with (finite dimensional) bimodules as 1-morphisms and morphisms of bimodules as 2-morphisms, as in \cite{schreiber-aqft}.\footnote{The 2-category of Kapranov-Voevodsky 2-vector spaces can be seen as the full subcategory of the 2-category of $\mathbb{K}$-algebras and bimodules on the $\mathbb{K}$-algebras of the form $\mathbb{K}^{\oplus m}$, for $m\in \mathbb{N}$.} This latter incarnation of $2$-Vect suggests an iterative definition of $n$-Vect, see \cite{fhlt}. For instance one can define $3$-Vect as the 3-category whose objects are tensor categories over $\mathbb{K}$, whose morphisms are bimodule categories, and so on. In any of its incarnations, $n$-Vect is 
%(or, at least, is supposed to be) 
an example of symmetric monoidal $n$-category. For instance, for $n=2$ the symmetric monoidal structure on the 2-category of finite $\mathbb{K}$-linear categories is induced by Deligne's tensor product \cite{deligne}.
 \end{example}
%An $\infty$-category is a generalization of an $n$-category, in which we have $k$-morphisms up to infinity. 
When one has $k$-morphisms for any $k$ up to infinity, one speaks of an $\infty$-category. Just to settle the notation, we give the following
 \begin{definition}
 For $n\geq{0}$, a $(\infty,n)$-category is a $\infty$-category in which every $k$-morphisms is invertible for $k>n$.
 \end{definition}
 Details, and a rigorous definition of an $(\infty,n)$-category as an $n$-fold complete Segal space can be found in \cite{barwick}; see also \cite{barwick-schommer-pries,lurie,scheimbauer}.
 Notice that in the ``definition'' above, invertibily of $k$-morphisms must be understood recursively in the higher categorical sense, i.e. up to invertible $k+1$-morphisms. In particular, any $n$-category can be extended to an $n$-\emph{discrete} $(\infty,n)$-category, i.e., an $(\infty,n)$-category in which all $k$-morphisms for $k>n$ are identities. We will often pass tacitily from $n$-categories to $n$-discrete $\infty$ categories in what follows. Moreover, given an $(\infty, n)$-category and objects $x,y\in\mathcal{C}$, there is a $(\infty,n-1)$-category ${\rm Mor}_{\mathcal{C}}(x,y)$ of 1-morphisms.
\begin{example}\label{pathgrp} The prototypical example of $\infty$-category arises from homotopy theory.
%, and has been crucial for the formulation of $\infty$-category theory in \cite{lurie}. 
Indeed, let $X$ be a topological space. Then there is an $\infty$-category $\pi_{\leq{\infty}}(X)$, with objects given by the points of $X$, 1-morphisms given by continuous paths in $X$, 2-morphisms given by homotopies of paths with fixed end-points, 3-morphisms given by homotopies between homotopies, and so on. Since the composition of paths is only associative up to homotopy, i.e. up to a 2-morphism, $\pi_{\leq{\infty}}(X)$ is necessarily a weak $\infty$-category. Neverthless, the 2-morphism above, which is part of the data, is invertible up to 3-morphisms. Indeed, all $k$-morphisms in $\pi_{\leq{\infty}}(X)$ are invertible, hence it is a $(\infty, 0)$-category, which is usually called a $\infty$-groupoid. The guiding principle behind $\infty$-categories is that also the converse should be true, i.e. any $\infty$-groupoid arises as $\pi_{\leq{\infty}}(X)$ for some topological space, hence the theory of $(\infty, 0)$-categories can be defined via homotopy theory.
\end{example}
\begin{example}
A genuine example of an $(\infty,n)$-category with $n> 0$ is given by ${\rm Bord}(n)$, the $\infty$-category of cobordisms, which can be informally described as consisting of having points as objects, 1-dimensional bordisms as 1-morphisms, 2-dimensional bordisms between bordisms as 2-morphisms, and so on until we arrive at $n$-dimensional bordisms as $n$-morphisms, from where higher morphisms are given by diffeomorphisms and isotopies: more precisely, the $(n+1)$-morphisms are diffeomorphisms which fix the boundaries, $(n+2)$-morphisms are isotopies of diffeomorphisms, $(n+3)$-morphisms are isotopies of isotopies, and so on. This is an example of a $(\infty,n)$-symmetric monoidal category, see \cite{lurie}. A rigorous and detailed construction of ${\rm Bord}(n)$ as an $(\infty,n)$-symmetric monoidal category can be found in \cite{scheimbauer}.
\end{example}
\begin{remark} The $(\infty,n)$-category ${\rm Bord}(n)$ comes also in other ``flavours'', depending on the additional structures we equip the manifolds with: for instance orientation and $n$-framing give $(\infty,n$)-categories ${\rm Bord}(n)^{or}$ and ${\rm Bord}(n)^{fr}$, respectively. More precisely, let $G\to GL(n;\mathbb{R})$ be a group homomorphism. For any $k\leq n$, a $k$-manifold $M$ is naturally equipped with the $GL(n;\mathbb{R})$-bundle $TM\oplus \mathbb{R}^{n-k}$, and a $G$-framing for $M$ is the datum of a reduction of the structure group of $TM\oplus \mathbb{R}^{n-k}$ from $GL(n;\mathbb{R})$ to $G$. Just as in the non-framed case, $G$-framed $k$-manifolds with $k\leq n$ are the $k$-morphisms for a symmetric monoidal $(\infty,n)$-category ${\rm Bord}(n)^{G}$, called the $(\infty,n)$-category of $G$-cobordism. Notice that one can consider an equivalent category of $G$-cobordisms, where our manifolds are equipped with a $O(n)$-structure on the stable tangent bundle, and its $G$-reductions. The equivalence comes from the fact that $O(n)$ is a retract of $GL(n;\mathbb{R})$. We will implicitly make this identification later on.\\
In particular, when $G$ is the trivial group, one writes ${\rm Bord}(n)^{fr}$ for ${\rm Bord}(n)^{\{e\}}$, and calls it the $(\infty,n)$-category of framed cobordism, while when $G$ is $SO(n)$ one writes ${\rm Bord}(n)^{or}$ for ${\rm Bord}(n)^{SO(n)}$, and calls it the $(\infty,n)$-category of oriented cobordism. The unoriented case ${\rm Bord}(n)$ is obtained when $G$ is $O(n)$.
We will use ${\rm Bord}(n)$ generically to indicate one of these $G$-framed versions, unless explicitly specified.
\end{remark}
\medskip

As for any mathematical structure, there is a notion of morphisms between $\infty$-category, which are given by $\infty$-functors. Informally speaking,  an $\infty$--functor $F$ between two
$\infty$--categories $\mathcal{C}$ and $\mathcal{D}$ is a rule assigning to each $k$-morphism in $\mathcal{C}$ a $k$-morphism in $\mathcal{D}$ in a way respecting sources, targets and (higher) compositions. For instance, if one adopts the simplicial model for
$(\infty, 1)$-categories, i.e., if one looks at $(\infty, 1)$-categories as simplicial sets with internal horn-filling conditions (with $k$-morphisms corresponding to $k$-simplices), then an $\infty$-functor between $(\infty, 1)$-categories is precisely a morphism of simplicial sets. See \cite[Chapter 1]{lurie2} and \cite{lurie} for details. In particular, given two $\infty$-categories $\mathcal{C}$ and $\mathcal{D}$, we have an $\infty$-category ${\rm Fun}(\mathcal{C},\mathcal{D})$. It is immediate to see that, if $\mathcal{D}$ is $n$-discrete, then also ${\rm Fun}(\mathcal{C},\mathcal{D})$ is $n$-discrete (or, more precisely, it is equivalent to an $n$-discrete $\infty$-category). \\ \\
Given an $(\infty,n)$-category $\mathcal{C}$ we can obtain an ordinary category $\pi_{\leq 1}\mathcal{C}$, called the \emph{homotopy category} of $\mathcal{C}$, with objects given by the objects of $\mathcal{C}$, and morphisms given by 
equivalence classes of 1-morphisms up to invertible 2-morphisms
 in $\mathcal{C}$, where invertibility is understood in the $\infty$ setting. Similarly, for $k\geq{2}$ we can associate to $\mathcal{C}$ a $k$-category $\pi_{\leq k}\mathcal{C}$, called the \emph{homotopy $k$-category} of $\mathcal{C}$, with objects and morphisms up to $k-1$-morphisms given by those of $\mathcal{C}$, and $k$-morphisms given by equivalence classes of $k$-morphisms up to invertible $k+1$-morphisms. By the usual identification of $k$-categories with $k$-discrete $\infty$-categories, we have then the following %%
\begin{lemma}\label{hcat}
The formation of the \emph{homotopy $n$-category} is the adjoint $\infty$-functor to the inclusion of $n$-discrete categories into $(\infty,n)$-categories, i.e., if  
$\mathcal{C}$ and $\mathcal{D}$ are $(\infty,n)$-categories, with $\mathcal{D}$ discrete, then one has a natural equivalence of $\infty$-categories
\begin{equation}
\mathrm{Fun}(\mathcal{C},\mathcal{D})\cong \mathrm{Fun}(\pi_{\leq n}\mathcal{C}, \mathcal{D}).
\end{equation}
\end{lemma}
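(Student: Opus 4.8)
The plan is to present $\pi_{\leq n}$ as the left adjoint of a reflective localization and then bootstrap the stated equivalence of functor categories from the object level by a Yoneda argument. The guiding observation is that an $\infty$-functor $F\colon\mathcal{C}\to\mathcal{D}$ whose target is $n$-discrete cannot detect any structure of $\mathcal{C}$ above level $n$: all $(n{+}1)$-morphisms of $\mathcal{D}$ are identities, so $F$ sends every $(n{+}1)$-morphism of $\mathcal{C}$ — in particular every invertible one — to an identity, and hence must send any two $n$-morphisms of $\mathcal{C}$ linked by an invertible $(n{+}1)$-morphism to one and the same $n$-morphism of $\mathcal{D}$. By construction this is exactly the identification defining the homotopy $n$-category, so $F$ factors canonically through the projection $p_{\mathcal{C}}\colon \mathcal{C}\to\pi_{\leq n}\mathcal{C}$ (regarding the latter as an $n$-discrete $(\infty,n)$-category). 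Everything then reduces to showing that precomposition $p_{\mathcal{C}}^{*}\colon\mathrm{Fun}(\pi_{\leq n}\mathcal{C},\mathcal{D})\to\mathrm{Fun}(\mathcal{C},\mathcal{D})$ is an equivalence of $\infty$-categories whenever $\mathcal{D}$ is $n$-discrete, and that $p_{\mathcal{C}}$ is the unit of an adjunction.

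The first step is the object-level statement: for $\mathcal{D}$ $n$-discrete the map $(p_{\mathcal{C}})^{*}\colon\mathrm{Map}(\pi_{\leq n}\mathcal{C},\mathcal{D})\to\mathrm{Map}(\mathcal{C},\mathcal{D})$ is an equivalence of spaces, with quasi-inverse $F\mapsto\overline{F}$ furnished by the universal property of $\pi_{\leq n}\mathcal{C}$ sketched above. Phrased invariantly, this says that the $n$-discrete $(\infty,n)$-categories are precisely the local objects for the reflective localization $\pi_{\leq n}$; for $n=1$ this is the classical relation between an $(\infty,1)$-category and its homotopy category, and in general it follows from iterating the construction (applying $\pi_{0}$ to the hom-$(\infty,n{-}1)$-categories) or from the $n$-truncation functors on $(\infty,n)$-categories, for which I would refer to \cite{lurie,barwick}. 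This already yields the adjunction asserted in the first sentence of the statement.

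To upgrade the equivalence on mapping spaces to an equivalence of functor $\infty$-categories I would test against an arbitrary $(\infty,n)$-category $\mathcal{E}$ and invoke the Yoneda lemma. Using the cartesian closed structure and the fact, recorded just before the statement, that $\mathrm{Fun}(\mathcal{E},\mathcal{D})$ is again $n$-discrete when $\mathcal{D}$ is, one gets a chain of natural equivalences
\begin{equation*}
\mathrm{Map}\bigl(\mathcal{E},\mathrm{Fun}(\mathcal{C},\mathcal{D})\bigr)\simeq\mathrm{Map}\bigl(\mathcal{C},\mathrm{Fun}(\mathcal{E},\mathcal{D})\bigr)\simeq\mathrm{Map}\bigl(\pi_{\leq n}\mathcal{C},\mathrm{Fun}(\mathcal{E},\mathcal{D})\bigr)\simeq\mathrm{Map}\bigl(\mathcal{E},\mathrm{Fun}(\pi_{\leq n}\mathcal{C},\mathcal{D})\bigr),
\end{equation*}
where the middle equivalence is the object-level statement applied with target $\mathrm{Fun}(\mathcal{E},\mathcal{D})$. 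Since this is natural in $\mathcal{E}$ and is implemented by $p_{\mathcal{C}}^{*}$, the Yoneda lemma gives $\mathrm{Fun}(\mathcal{C},\mathcal{D})\simeq\mathrm{Fun}(\pi_{\leq n}\mathcal{C},\mathcal{D})$, as claimed. (Alternatively, since both sides are already $n$-discrete, one could instead check directly that $p_{\mathcal{C}}^{*}$ is essentially surjective — done in the previous step — and fully faithful up to level $n$, again by the collapsing argument applied to natural transformations and their higher morphisms.)

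The genuinely delicate point is not this formal bookkeeping but the input it rests on: that "the formation of the homotopy $n$-category", as described informally in the text, actually underlies a left adjoint — equivalently, that the $n$-discrete $(\infty,n)$-categories form a reflective subcategory with reflector $\pi_{\leq n}$. Making this precise requires fixing a rigorous model of $(\infty,n)$-categories (complete $n$-fold Segal spaces, say) and identifying $\pi_{\leq n}$ there with a Bousfield localization whose local objects are exactly the $n$-discrete ones. Once that is in place, together with the stability of $n$-discreteness under $\mathrm{Fun}(\mathcal{E},-)$, the remainder is automatic.
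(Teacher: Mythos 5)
Your argument is correct and follows essentially the same route as the paper, which offers no formal proof beyond the remark that a functor into an $n$-discrete target must collapse all morphisms above level $n$ and hence factor through $\pi_{\leq n}\mathcal{C}$ --- exactly your first paragraph. Your Yoneda bootstrap and your flagging of the need for a model in which $\pi_{\leq n}$ is genuinely a reflector are a careful (and welcome) elaboration of the same idea rather than a different approach.
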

In more colloquial terms, this is just the statement that if $\mathcal{D}$ is $n$-discrete then an $\infty$-functor $\mathcal{C}\to \mathcal{D}$ naturally factors as $\mathcal{C}\to \pi_{\leq n}\mathcal{C}\to \mathcal{D}$.\\
For any $(\infty, n)$-category $\mathcal{C}$ and an object $x\in\mathcal{C}$, we have that ${\rm End}_{\mathcal{C}}(x)={\rm Hom}_{\mathcal{C}}(x,x)$ is a monoidal $(\infty, n-1)$-category. In particular, to a monoidal $(\infty, n)$-category $\mathcal{C}$ we can canonically assign a monoidal $(\infty, n-1)$-category $\Omega\mathcal{C}:={\rm End}_{\mathcal{C}}(1_{\mathcal{C}})$, where $1_{\mathcal{C}}$ denotes the monoidal unit of $\mathcal{C}$. We will refer to $\Omega\mathcal{C}$ as the (based) loop space of $\mathcal{C}$. It can be seen as the homotopy pullback
\begin{equation}
\xymatrix{\Omega\mathcal{C}\ar@{}[dr]|(.2){\lrcorner}
\ar[r]\ar[d]&\mathbf{1}\ar[d]\\
\mathbf{1}\ar[r]&\mathcal{C}
}
\end{equation}
where $\mathbf{1}$ is the trivial monoidal category, and $\mathbf{1}\to \mathcal{C}$ is the unique monoidal functor from $\mathbf{1}$ to $\mathcal{C}$.
We can reiterate the construction to obtain a monoidal $(\infty, n-k)$-category, which we denote with $\Omega^{k}\mathcal{C}$. If $\mathcal{C}$ is also symmetric, then $\Omega^{k}\mathcal{C}$ is symmetric as well. We will denote with ${\rm Fun}^{\otimes}(\mathcal{C},\mathcal{D})$ the $(\infty, n)$-category of monoidal $\infty$-functors between $\mathcal{C}$ and $\mathcal{D}$. Any monoidal $\infty$-functor $F$ from $\mathcal{C}$ to $\mathcal{D}$ induces a monoidal $\infty$-functor $\Omega^{k}F$ from $\Omega^{k}\mathcal{C}$ to $\Omega^{k}\mathcal{D}$.
\begin{example}One has $\Omega(n\text{-}\mathrm{Vect})\simeq (n-1)\text{-}\mathrm{Vect}$ for any $n\geq 1$. For instance, the monoidal unit of the category 1-Vect is the field $\mathbb{K}$ seen as a vector space over itself, hence 
\begin{equation}
\Omega(1\text{-}\mathrm{Vect})=\mathrm{End}_{1\text{-}\mathrm{Vect}}(\mathbb{K})=\mathbb{K}=0\text{-}\mathrm{Vect}.
\end{equation}
Similarly, the monoidal unit of the 2-category 2-Vect is the category Vect, while its category of endomorphisms is the category of linear functors from Vect to Vect, which can be canonically identified with Vect itself.
%$\mathbb{K}$-algebra $\mathbb{K}$, and its category of endomorphism in 2-Vect is the category of $\mathbb{K}\text{-}\mathbb{K}$-bimodules, i.e., the category 1-Vect of vector spaces over $\mathbb{K}$.
\end{example} 
\begin{lemma}\label{endo}
Let $\mathcal{C}$ be a symmetric monoidal $(\infty, n)$-category, and let $\mathcal{D}$ be a symmetric monoidal $(\infty, n+1)$-category. Then
\begin{equation}
{\rm End}_{{\rm Fun}^{\otimes}(\mathcal{C},\mathcal{D})}({\underline{1}}_{\mathcal{D}})\simeq{\rm Fun}^{\otimes}(\mathcal{C},\Omega\mathcal{D})
\end{equation}
where $\underline{1}_{\mathcal{D}}\colon \mathcal{C}\to \mathcal{D}$ denotes the trivial monoidal functor, mapping all objects of $\mathcal{C}$ to the monoidal unit $1_{\mathcal{D}}$ of $\mathcal{D}$, and all morphisms in $\mathcal{C}$ to identities.
\end{lemma}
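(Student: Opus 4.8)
The plan is to realise both sides of the asserted equivalence as one and the same based loop space, and then push the functor $\mathrm{Fun}^{\otimes}(\mathcal{C},-)$ through the homotopy pullback defining $\Omega\mathcal{D}$. First I would note that, since $\mathcal{D}$ is symmetric monoidal, the $(\infty,n+1)$-category $\mathrm{Fun}^{\otimes}(\mathcal{C},\mathcal{D})$ carries a symmetric monoidal structure computed object-wise, $(F\otimes G)(x)=F(x)\otimes_{\mathcal{D}}G(x)$ (the symmetry of $\mathcal{D}$ being exactly what is needed to make $F\otimes G$ monoidal again), and that its monoidal unit is precisely the trivial functor $\underline{1}_{\mathcal{D}}$. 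Hence, directly from the description of the based loop space recalled in the excerpt,
\[
\mathrm{End}_{\mathrm{Fun}^{\otimes}(\mathcal{C},\mathcal{D})}(\underline{1}_{\mathcal{D}})=\Omega\,\mathrm{Fun}^{\otimes}(\mathcal{C},\mathcal{D})=\mathbf{1}\times_{\mathrm{Fun}^{\otimes}(\mathcal{C},\mathcal{D})}\mathbf{1},
\]
the homotopy pullback of two copies of the unit functor $\mathbf{1}\to\mathrm{Fun}^{\otimes}(\mathcal{C},\mathcal{D})$.

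Next I would invoke two facts. First, $\mathbf{1}$ is the terminal symmetric monoidal $(\infty,k)$-category in every degree $k$, so $\mathrm{Fun}^{\otimes}(\mathcal{C},\mathbf{1})\simeq\mathbf{1}$, and under this identification the map $\mathbf{1}\to\mathrm{Fun}^{\otimes}(\mathcal{C},\mathcal{D})$ selecting $\underline{1}_{\mathcal{D}}$ is $\mathrm{Fun}^{\otimes}(\mathcal{C},u)$ for the unit functor $u\colon\mathbf{1}\to\mathcal{D}$, since the composite $\mathcal{C}\to\mathbf{1}\xrightarrow{u}\mathcal{D}$ is exactly $\underline{1}_{\mathcal{D}}$. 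Second, $\mathrm{Fun}^{\otimes}(\mathcal{C},-)$ preserves homotopy limits: it is the internal-hom functor for the (Day-type) symmetric monoidal structure on symmetric monoidal $(\infty,\bullet)$-categories, hence a right adjoint; concretely, limits of $\infty$-functors are computed object-wise and an object-wise limit of symmetric monoidal functors is again symmetric monoidal. Applying $\mathrm{Fun}^{\otimes}(\mathcal{C},-)$ to the defining pullback square of $\Omega\mathcal{D}$ then yields
\[
\mathrm{Fun}^{\otimes}(\mathcal{C},\Omega\mathcal{D})\simeq\mathrm{Fun}^{\otimes}(\mathcal{C},\mathbf{1})\times_{\mathrm{Fun}^{\otimes}(\mathcal{C},\mathcal{D})}\mathrm{Fun}^{\otimes}(\mathcal{C},\mathbf{1})\simeq\mathbf{1}\times_{\mathrm{Fun}^{\otimes}(\mathcal{C},\mathcal{D})}\mathbf{1},
\]
which, combined with the previous display, gives the lemma.

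It is worth keeping in mind that, concretely, the equivalence merely unwinds the definitions: a monoidal endo-transformation $\eta$ of $\underline{1}_{\mathcal{D}}$ assigns to each object $x$ of $\mathcal{C}$ an endomorphism $\eta_x$ of $1_{\mathcal{D}}$, i.e.\ an object of $\Omega\mathcal{D}$, to each $1$-morphism of $\mathcal{C}$ its naturality $2$-cell, i.e.\ a $1$-morphism of $\Omega\mathcal{D}$, and so on in higher degrees, all compatibly with tensor products by monoidality of $\eta$ — which is precisely the data of a symmetric monoidal functor $\mathcal{C}\to\Omega\mathcal{D}$, the inverse assignment being the evident one. The pullback argument simply packages this bookkeeping, together with naturality in $\mathcal{C}$ and $\mathcal{D}$, once and for all. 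Accordingly the main obstacle is foundational rather than combinatorial: one needs to know that $\mathrm{Fun}^{\otimes}(\mathcal{C},\mathcal{D})$ genuinely is symmetric monoidal with the object-wise structure and unit $\underline{1}_{\mathcal{D}}$, and that $\mathrm{Fun}^{\otimes}(\mathcal{C},-)$ commutes with homotopy pullbacks of symmetric monoidal $(\infty,\bullet)$-categories — both of which are instances of the ``robust notion of monoidal $\infty$-functors and (lax) natural transformations'' postulated in the introduction. For $n\le 2$ these are classical statements about symmetric monoidal bicategories and functor bicategories, so the argument goes through verbatim in that setting.
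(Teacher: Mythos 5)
Your argument is correct at the level of rigor the paper operates at, and it reaches the same key input as the paper --- the homotopy pullback square defining $\Omega\mathcal{D}$ --- but by a different route. The paper's proof is a one-step unwinding: since $\underline{1}_{\mathcal{D}}$ is the composite $\mathcal{C}\to\mathbf{1}\to\mathcal{D}$, an endomorphism of it is exactly a homotopy-commutative square with apex $\mathcal{C}$ over the cospan $\mathbf{1}\to\mathcal{D}\leftarrow\mathbf{1}$, and the universal property of the pullback $\Omega\mathcal{D}=\mathbf{1}\times_{\mathcal{D}}\mathbf{1}$ identifies such squares with functors $\mathcal{C}\to\Omega\mathcal{D}$. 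You instead factor the statement as ${\rm End}_{{\rm Fun}^{\otimes}(\mathcal{C},\mathcal{D})}(\underline{1}_{\mathcal{D}})=\Omega\,{\rm Fun}^{\otimes}(\mathcal{C},\mathcal{D})$ followed by the commutation $\Omega\,{\rm Fun}^{\otimes}(\mathcal{C},\mathcal{D})\simeq{\rm Fun}^{\otimes}(\mathcal{C},\Omega\mathcal{D})$. This is a genuinely nicer-looking intermediate statement (loops commute with the mapping object), but it costs you two foundational inputs the paper's proof does not need: that ${\rm Fun}^{\otimes}(\mathcal{C},\mathcal{D})$ carries the pointwise symmetric monoidal structure with unit $\underline{1}_{\mathcal{D}}$, and that ${\rm Fun}^{\otimes}(\mathcal{C},-)$ preserves homotopy pullbacks of symmetric monoidal $(\infty,\bullet)$-categories. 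Both are reasonable under the paper's standing assumptions on a ``robust'' theory of monoidal $\infty$-functors (and are classical for $n\le 2$), but you should flag them as additional hypotheses rather than consequences; the paper's direct argument deliberately avoids them by invoking the universal property only once, against the apex $\mathcal{C}$ rather than against the whole functor category. Your closing unwinding of a monoidal endo-transformation of $\underline{1}_{\mathcal{D}}$ into the data of a functor $\mathcal{C}\to\Omega\mathcal{D}$ is exactly the concrete content of the paper's one-line proof, so the two arguments agree where it matters.
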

\begin{proof}
The trivial monoidal functor $\underline{1}_{\mathcal{D}}$ is the composition $\mathcal{C}\to \mathbf{1}\to \mathcal{D}$. It follows from this description that ${\rm End}_{{\rm Fun}^{\otimes}(\mathcal{C},\mathcal{D})}({\underline{1}}_{\mathcal{D}})$ is the $\infty$-category of homotopy commutative diagrams
\begin{equation}
\xymatrix{\mathcal{C}
\ar[r]\ar[d]&\mathbf{1}\ar[d]\\
\mathbf{1}\ar[r]&\mathcal{D}
\ar@{=>}(8.25,-5.25);(4.75,-8.75)
}
\end{equation}
By the universal property of the homotopy pullback, this is equivalent to ${\rm Fun}^{\otimes}(\mathcal{C},\Omega\mathcal{D})$.
\end{proof}
On the other hand, given a monoidal $(\infty, n)$-category $\mathcal{C}$ we can obtain an $(\infty, n+1)$-category $B\mathcal{C}$ with a single object, and $\mathcal{C}$ as the $\infty$-category of morphisms. We will refer to $B\mathcal{C}$ as the classifying space of $\mathcal{C}$. The relationship between $B$ and $\Omega$ is given by the following
\begin{lemma}\label{dlemma}
Let $\mathcal{C}$ be a symmetric monoidal $(\infty, n)$-category, and let $\mathcal{D}$ be a symmetric monoidal $(\infty, n+1)$-category. Then
\begin{equation}
{\rm Fun}^{\otimes}(B\mathcal{C},\mathcal{D})\simeq{\rm Fun}^{\otimes}(\mathcal{C},\Omega\mathcal{D})
\end{equation}
\end{lemma}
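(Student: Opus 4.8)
\emph{Proof strategy.} This is the statement that the delooping functor $B$ is left adjoint to the loop-space functor $\Omega$, enhanced from an equivalence of mapping spaces to an equivalence of functor $\infty$-categories, and the plan is to reduce it to two elementary observations about $B\mathcal{C}$ together with Lemma~\ref{endo}.

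The first observation is that, since $\mathcal{C}$ is symmetric monoidal, $B\mathcal{C}$ inherits a symmetric monoidal structure, and its unique object $\ast$ is necessarily the monoidal unit $1_{B\mathcal{C}}$, there being no other object available. An Eckmann--Hilton argument then shows that the two a priori distinct monoidal structures on ${\rm End}_{B\mathcal{C}}(\ast)$ --- the one coming from composition in $B\mathcal{C}$ and the one coming from the ambient monoidal structure of $B\mathcal{C}$ --- coincide, so that $\Omega B\mathcal{C}\simeq\mathcal{C}$ canonically as symmetric monoidal $(\infty,n)$-categories. Dually, $B\Omega\mathcal{D}$ is identified with the full symmetric monoidal sub-$(\infty,n+1)$-category $\iota\colon B\Omega\mathcal{D}\hookrightarrow\mathcal{D}$ spanned by $1_{\mathcal{D}}$ (closed under $\otimes$ because $1_{\mathcal{D}}\otimes 1_{\mathcal{D}}\simeq 1_{\mathcal{D}}$), and more generally $B$ and $\Omega$ restrict to mutually inverse equivalences between symmetric monoidal $(\infty,n)$-categories and symmetric monoidal $(\infty,n+1)$-categories with a single object.

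The second, and key, observation is that a monoidal $\infty$-functor $F\colon B\mathcal{C}\to\mathcal{D}$ preserves monoidal units, so that $F(\ast)=F(1_{B\mathcal{C}})=1_{\mathcal{D}}$ and hence $F$ factors through $\iota$ up to a contractible space of choices. Therefore post-composition with $\iota$ is an equivalence ${\rm Fun}^{\otimes}(B\mathcal{C},B\Omega\mathcal{D})\simeq{\rm Fun}^{\otimes}(B\mathcal{C},\mathcal{D})$, while applying $\Omega$ and using $\Omega B\mathcal{C}\simeq\mathcal{C}$ and $\Omega B\Omega\mathcal{D}\simeq\Omega\mathcal{D}$ (with quasi-inverse the delooping $G\mapsto BG$) gives an equivalence ${\rm Fun}^{\otimes}(B\mathcal{C},B\Omega\mathcal{D})\simeq{\rm Fun}^{\otimes}(\mathcal{C},\Omega\mathcal{D})$; composing the two yields the claim. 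As a consistency check that also routes the argument through the preceding lemma, note that ${\rm Fun}^{\otimes}(\mathcal{C},\mathcal{D})$ is itself symmetric monoidal under the pointwise tensor product, with monoidal unit the trivial functor $\underline{1}_{\mathcal{D}}$, so Lemma~\ref{endo} can be rephrased as ${\rm Fun}^{\otimes}(\mathcal{C},\Omega\mathcal{D})\simeq\Omega\,{\rm Fun}^{\otimes}(\mathcal{C},\mathcal{D})$, and the same bookkeeping identifies this loop space with ${\rm Fun}^{\otimes}(B\mathcal{C},\mathcal{D})$.

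The steps that are not purely formal, and which I expect to be the real obstacle to a fully rigorous write-up, are the foundational properties of the delooping construction $B$ in the symmetric monoidal $(\infty,n+1)$-categorical setting: that $B$ is well defined and symmetric monoidal, that the unit $\mathrm{id}\to\Omega B$ of the adjunction is an equivalence (this is exactly the Eckmann--Hilton step above), and that the $B\dashv\Omega$ adjunction interacts with the pointwise symmetric monoidal structure on functor $\infty$-categories cleanly enough to upgrade from an equivalence of mapping spaces to the asserted equivalence of $\infty$-categories. All of these are part of the robust foundations of symmetric monoidal $(\infty,n)$-categories assumed throughout the paper, and for $n\le 2$ they reduce to classical facts about symmetric monoidal bicategories, so the remaining work is bookkeeping rather than new ideas.
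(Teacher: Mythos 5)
Your proposal is correct and rests on exactly the same key observation as the paper's proof: a monoidal functor $F\colon B\mathcal{C}\to\mathcal{D}$ must send the unique object to $1_{\mathcal{D}}$, so all higher morphisms of $B\mathcal{C}$ (i.e., the cells of $\mathcal{C}$ shifted up by one) land in $\Omega\mathcal{D}=\mathrm{End}_{\mathcal{D}}(1_{\mathcal{D}})$. Your additional bookkeeping (the Eckmann--Hilton identification $\Omega B\mathcal{C}\simeq\mathcal{C}$ of symmetric monoidal structures and the factorization through $B\Omega\mathcal{D}\hookrightarrow\mathcal{D}$) is a more careful rendering of the same argument, which the paper leaves implicit.
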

\begin{proof}
Let $F\in {\rm Fun}^{\otimes}(B\mathcal{C},\mathcal{D})$. Since $B\mathcal{C}$ is an $\infty$-category with a single object $\star$, and $F$ is a monoidal functor, then necessarily $F(\star)={1_{\mathcal{D}}}$. Hence, to any $k$-morphism in $B\mathcal{C}$, corresponding to a $(k-1)$-morphism in $\mathcal{C}$, is assigned by $F$ a $k$-morphism from $1_{\mathcal{D}}$ to $1_{\mathcal{D}}$ in $\mathcal{D}$, i.e., a $(k-1)$-morphism in the symmetric monoidal $(\infty,n)$-category $\Omega\mathcal{D}={\rm End}_{\mathcal{D}}({1_{\mathcal{D}}})$.
\end{proof} 
%%
%%Section 3
\section{Cobordism $(\infty,k)$-categories}\label{cobordism}
In this section we will recall some basic properties concerning $\infty$-categories of cobordisms. We will mainly refer to oriented cobordisms, unless otherwise stated.\\
Via the mapping cylinder construction, we obtain a monoidal embedding
\begin{equation}\label{emb}
i:{\rm Bord}(n)\hookrightarrow {\rm Bord}(n+1)
\end{equation}
Let us briefly recall how this works. 
Given a (orientation preserving) diffeomorphism $f:\Sigma_1\to \Sigma_2$ between closed $n$-dimensional oriented manifolds, the mapping cylinder of $f$ is the oriented manifold $M_{f}$ with boundary obtained as
\begin{equation}
M_{f}:=(([0,1]\times{\Sigma_1})\amalg{\Sigma_2})/\sim
\end{equation}
where $\sim$ is the equivalence relation generated by $(0,x)\sim{f(x)},\forall x\in{\Sigma_1}$.
In particular, we have that $\partial{M}_{f}=\Sigma_1\amalg{\overline{\Sigma}_2}$, where $\overline{\Sigma}_2$ denotes the manifold $\Sigma_2$ endowed with the opposite orientation, so that $M_{f}$ represents a (oriented) cobordism between $\Sigma_1$ and $\Sigma_2$. This means that $f\mapsto M_f$ maps an $(n+1)$-morphism in $\mathrm{Bord}(n)$ to an $(n+1)$-morphism in $\mathrm{Bord}(n+1)$. Moreover, the mapping cylinder construction is compatible with composition of diffeomorphisms in the following sense: 
if $g:\Sigma_1\to{\Sigma_2}$ and $f:\Sigma_2\to{\Sigma_3}$ are diffeomorphisms between closed oriented $n$-dimensional manifolds, then we have a canonical diffeomorphism
\begin{equation}
M_{fg}\simeq M_{f}\circ M_{g}\,.
\end{equation}
In other words, $f\mapsto M_f$ behaves functorially with respect to the composition of $(n+1)$-morphism. Moreover,
the mapping cylinder is compatible with isotopies of diffeomorphisms. 
Namely, an isotopy $h$ between orientation preserving diffeomorphisms $f,g\colon \Sigma_1\to \Sigma_2$ 
induces a orientation preserving diffeomorphism
\begin{equation}
h_{*}:M_{f}\xrightarrow{\simeq}{M_{g}}\,.
\end{equation}
Hence the mapping cylinder construction maps an $(n+2)$-morphism in $\mathrm{Bord}(n)$ to an $(n+2)$-morphism in $\mathrm{Bord}(n+1)$, and also in this case one can verify the compatibility with composition.
Similarly, isotopies between isotopies of diffeomorphisms produce correspondent isotopies of diffeomorphisms of the mapping cylinders.  One has natural generalisations to unoriented and to $G$-framed cobordism, and so on, so that the mapping cylinder construction actually gives an $\infty$-functor $\mathrm{Bord}(n)\to \mathrm{Bord}(n+1)$, which is immediately seen to be compatible with disjoint unions, i.e., with the monoidal structure of cobordism categories. Details on the properties of the functor $i$ can be found in \cite{lurie}: interestingly, the proof of the fact that $i$ is actually a (not full) embedding of $\infty$-categories is at the core of the Cobordism Hypothesis.
\begin{remark}
One has natural generalisations of (\ref{emb}) to unoriented, and to $G$-framed cobordisms. 
\end{remark}

%%%
Applying the iterated loop space construction to the symmetric monoidal $(\infty,n)$-category $\mathrm{Bord}(n)$ we obtain the following important
\begin{definition}
For any $0\leq k\leq n$, the $(\infty,k)$-symmetric monoidal category $\mathrm{Cob}_k^\infty(n)$ is defined as
\begin{equation}
\mathrm{Cob}_k^\infty(n):=\Omega^{n-k}\mathrm{Bord}(n)
\end{equation}
It will be called the $(\infty,k)$-category of $n$-dimensional cobordism extended down to codimension $k$.
\end{definition}
In a similar way, one can define $G$-framed cobordism categories $\mathrm{Cob}_k^{\infty,G}(n)$.\\
Note that $\mathrm{Bord}(n)=\mathrm{Cob}_n^\infty(n)$, the $(\infty,n)$-category of $n$-dimensional cobordism extended down to codimension $n$. We will refer to $\mathrm{Bord}(n)$ as the \emph{fully extended} $n$-dimensional cobordism category.\\ 
Notice that if $F\colon \mathcal{C}\to \mathcal{D}$ is a monoidal functor, then also $\Omega(F)\colon \Omega\mathcal{C}\to \Omega\mathcal{D}$ is monoidal. This in particular implies that the monoidal embedding  $i:{\rm Bord}(n)\hookrightarrow {\rm Bord}(n+1)$ induces monoidal embeddings
\begin{equation}
\mathrm{Cob}_k^\infty(n) \hookrightarrow \mathrm{Cob}_{k+1}^\infty(n+1) 
\end{equation}
for any $k\geq 0$.

\begin{remark}\label{classical-cobordism}
The homotopy category $\pi_{\leq 1}\mathrm{Cob}_1^\infty(n)$ is the usual category of $n$-dimensional cobordism: it has $(n-1)$-closed manifolds as objects and diffeomorphism classes of $n$-dimensional cobordisms as morphisms. In the following, we will refer to this category simply as $\mathrm{Cob}(n)$
\end{remark}
\begin{remark}\label{gammaenne}
The $(\infty,0)$-category $\mathrm{Cob}_0^\infty(n)$ is the $\infty$-groupoid having closed $n$-manifolds as objects, diffeomorphisms between them as 1-morphisms, isotopies between diffeomorphisms as 2-morphisms and so on.\\ 
Let $\Sigma$ be a closed $n$-dimensional manifold. By slight abuse of notation, we will denote by $B\Gamma^\infty(\Sigma)$ the connected component of $\Sigma$ in $\mathrm{Cob}_0^\infty(n)$. The homotopy category $\pi_{\leq 1}\mathrm{Cob}_0^\infty(n)$ is the groupoid usually denoted $\Gamma_n$, see \cite{bakalovkirillov}, while $\pi_{\leq 1}B\Gamma^\infty(\Sigma)$ is the (one-object groupoid associated with the) mapping class group $\Gamma(\Sigma)$ of $\Sigma$. To emphasise the $G$-framing, we will occasionally write $\Gamma^G(\Sigma)$ for the mapping class group of a $G$-framed manifold $\Sigma$. For instance, if $\Sigma$ is a closed oriented surface, then $\Gamma^{SO(2)}(\Sigma)$ is the mapping class group of isotopy classes of oriented diffeomorphisms one encounters in Teichm\"uller theory. If $\Sigma$ is a closed oriented surface endowed with a  spin structure, i.e., with a lift of the structure group $SO(2)$ of the tangent bundle to the double cover $SO(2)\xrightarrow{2:1} SO(2)$, then $\Gamma^{\mathrm{Spin}}(\Sigma)$ is the spin-framed mapping class group of $\Sigma$ considered in \cite{kriz}.  
\end{remark}
%%
%%Section 4
\section{Topological Quantum Field Theories}\label{tqft}
In this section we introduce the notion of a topological quantum field theory with moduli level $m$.
\subsection{TQFTs with moduli level} Since both $\mathrm{Cob}_k^\infty(n)$ and $r$-Vect are symmetric monoidal $\infty$-categories, it is meaningful to consider symmetric monoidal functors between them. This leads us to the main definition in the present work
\begin{definition}\label{modulitqft}
An $n$-dimensional TQFT extended down to codimension $k$ with moduli level $m$ is a symmetric monoidal functor
\begin{equation}
Z\colon \mathrm{Cob}_k^\infty(n)\to (m+k)\text{-}\mathrm{Vect}.
\end{equation}
\end{definition}
 \begin{remark}\label{trunc}
 One main feature of $r$-Vect, whichever realisation of $r$-vector spaces one considers, is that $\Omega(r\text{-}\mathrm{Vect})\cong (r-1)\text{-}\mathrm{Vect}$. This, together with the equivalence $\Omega\mathrm{Cob}_k^\infty(n)\cong  \mathrm{Cob}_{k-1}^\infty(n)$, implies that by looping an $n$-dimensional TQFT extended down to codimension $k$  we obtain an $n$-dimensional TQFT extended down to codimension $k-1$ with the same moduli level:
 \begin{equation}
 \Omega Z\colon \mathrm{Cob}_{k-1}^\infty(n)\to (m+k-1)\text{-}\mathrm{Vect}.
\end{equation}
On the other hand, pulling back along the inclusion $\mathrm{Cob}_{k-1}^\infty(n-1) \hookrightarrow \mathrm{Cob}_{k}^\infty(n)$ one can restrict an $n$-dimensional TQFT extended down to codimension $k$ with moduli level $m$ to a $(n-1)$-dimensional TQFT extended down to codimension $k-1$ with moduli level $m+1$,
\begin{equation}
Z\bigr\vert_{k-1}\colon \mathrm{Cob}_{k-1}^\infty(n-1)\to (m+k)\text{-}\mathrm{Vect}.
\end{equation}
We will refer to $Z\bigr\vert_{k-1}$ as the $(n-1)$-dimensional \emph{truncation} of $Z$.
\end{remark}
The terminology used in Definition \ref{modulitqft} is due to the fact that a TQFT of moduli level greater than $0$ produces in general more refined manifold invariants than an ordinary TQFT, namely it can detect the moduli space of diffeomorphisms. As we will illustrate in the following examples, from a TQFT of moduli level $k$ we can obtain in specific situations the notion of ordinary and extended TQFTs.   

\begin{example}
An $n$-dimensional TQFT extended down to codimension $1$ with moduli level $0$ is a TQFT in the sense of Atiyah and Segal \cite{atiyah,segal}. Namely, since 1-Vect is 1-discrete, a symmetric monoidal functor $Z\colon \mathrm{Cob}_{1}^\infty(n)\to 1\text{-}\mathrm{Vect}$ factors through the category $\mathrm{Cob}(n)$ of $n$-dimensional cobordism $\pi_{\leq 1}\mathrm{Cob}_{1}^\infty(n)$; see Remark \ref{classical-cobordism}. It is interesting to notice that, even if one does not a priori imposes any finite dimensionality condition on the objects in 1-Vect, i.e., if one takes 1-Vect to be the category of all vector spaces over some fixed field $\mathbb{K}$, then, as an almost immediate corollary of the definition, the vector space $Z(M)$ that an Atiyah $n$-dimensional TQFT assigns to a closed $n-1$-manifold $M$ must be finite dimensional, see \cite{bakalovkirillov,kock}.
\end{example}
\begin{example}
Similarly, an $n$-dimensional TQFT extended down to codimension $2$ with moduli level $0$ is equivalently a symmetric monoidal 2-functor
\begin{equation}
Z:{\rm Cob}_{2}(n)\to{2\mbox{-}\rm Vect}
\end{equation}
where ${\rm Cob}_{2}(n)=\pi_{\leq 2}{\rm Cob}^{\infty}_{2}(n)$ is the so-called 2-category of extended cobordism. Its objects are $(n-2)$-dimensional closed manifolds, its 1-morphisms are $(n-1)$-dimensional cobordisms, and its 2-morphisms are diffeomorphism classes of $n$-dimensional cobordisms. Such a monoidal functor is sometimes called a (2-tier) extended $n$-dimensional TQFT, see \cite{ft,morton}. Notice that applying the loop construction to an extended TQFT one obtains an $n$-dimensional TQFT in the sense of Atiyah and Segal.
\end{example}
\begin{remark}
2-tier extended TQFTs have been the subject of great investigation, in particular in 3-dimension. Indeed, historically it was 3-dimensional Chern-Simons theory which motivated the notion of an extended field theory. Particularly relevant are the extended 3d TQFTs known as of \emph{Reshetikhin-Turaev} type \cite{rt} obtained by the algebraic data encoded in a modular tensor category, and those of \emph{Turaev-Viro} type \cite{tuvir}, which are constructed from a spherical fusion category\footnote{In general, the Turaev-Viro construction produces oriented theories, while Reshetikhin-Turaev theories require a \emph{framing} to be defined.}.
\end{remark}
\begin{example}
The \emph{categorified field theories} in \cite{dsps} are an example of topological quantum field theories extended down to codimension 2 with moduli level 1.
\end{example}

\subsection{Fully extended TQFTs} It is easy to see that a $1$-dimensional TQFT in the sense of Atiyah and Segal \cite{atiyah,segal} is completely determined by the vector space $V^{+}$ it assignes to the oriented point ${\rm pt}^{+}$. Moreover, the category of 1-dimensional Atiyah-Segal TQFTs, i.e. the category
\begin{equation}
{\rm Fun}^{\otimes}(\mathrm{Cob}_1^\infty(1),1\text{-}\mathrm{Vect})
\end{equation}
turns out to be equivalent to the groupoid obtained from the category of finite dimensional vector spaces by discarding all the noninvertible morphisms. This can be seen as follows. Given a monoidal natural transformation $\varphi\colon Z_1 \to Z_2$ between two 1-dimensional Atiyah-Segal TQFTs, then we have a linear morphism $\varphi({\rm pt}^+):V_1^+ \to V_2^+$. The compatibility of $\varphi$ with the evaluation and coevaluation morphisms forces $V_1^+$ and $V_2^+$ to have the same dimension, and $\varphi({\rm pt}^+)$ to be a linear isomorphism. By the same argument one can show that $n$-dimensional Atiyah-Segal TQFTs as well form a groupoid. See \cite{freed3} for details. 
\medskip

The rigidity of the 1-dimensional example illustrated above comes from the fact that the involved TQFT is a moduli level 0 \emph{fully extended} TQFT. 
%
%%% 
%
Indeed, these TQFTs encode so much information that they can be completely classified. This is indeed the content of the \emph{cobordism hypothesis}, which can be stated as follows.\footnote{Here we are formulating the cobordism hypothesis for TQFTs with target higher vector spaces; one can give a more general formulation with target an arbitrary $(\infty,n)$-symmetric monoidal category, see \cite{lurie}.} %%
\begin{theorem}[Lurie-Hopkins]\label{luriehopkins}
A moduli level 0 fully extended $n$-dimensional framed TQFT is completely determined by a fully dualizable $n$-vector space. More precisely, 
%let $\mathcal{C}$ be a symmetric monoidal $(\infty,n)$-category {\color{red} and $\mathcal{C}_{\mathrm{f.d.}}$
let $(n\text{-}\mathrm{Vect})_{\mathrm{fd}}$ be the 
 the full subcategory 
 of $n\text{-}\mathrm{Vect}$
 of\emph{fully dualizable} objects, and let $(n\text{-}\mathrm{Vect})_{\mathrm{fd}}^{(\infty,0)}$ be the underlying $(\infty,0)$-groupoid, i.e., the $(\infty,0)$-groupoid obtained from $(n\text{-}\mathrm{Vect})_{\mathrm{fd}}$ by discarding all the non-invertible morphisms. Then there is an equivalence of $\infty$-categories
\begin{equation}\label{cobframed}
{\rm Fun}^{\otimes}({\rm Bord}^{fr}(n),n\text{-}\mathrm{Vect})\simeq(n\text{-}\mathrm{Vect})_{\mathrm{fd}}^{(\infty,0)}
\end{equation}
induced by the evaluation functor $Z\mapsto Z({\rm pt}^{+})$. More generally, if $G\to O(n)$ is a reduction of structure group for $n$-dimensional manifolds, then there is a natural action of $G$ on $(n\text{-}\mathrm{Vect})_{\mathrm{fd}}$ and $Z\mapsto Z({\rm pt}^{+})$ induces an equivalence 
\begin{equation}\label{Gcob}
{\rm Fun}^{\otimes}({\rm Bord}^{G}(n),n\text{-}\mathrm{Vect})\simeq{(n\text{-}\mathrm{Vect})^{G\, (\infty,0)}_{\mathrm{fd}}}
\end{equation}
where ${(n\text{-}\mathrm{Vect})^G_{\mathrm{fd}}}$ denotes the full subcategory on the homotopy fixed points for the induced $G$-action on $(n\text{-}\mathrm{Vect})_{\mathrm{fd}}$.
\end{theorem}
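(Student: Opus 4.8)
The plan is to establish the framed statement \eqref{cobframed} first and then deduce the general $G$-framed statement \eqref{Gcob} from it by a homotopy fixed point argument; this is precisely the architecture of Lurie's proof of the Baez--Dolan cobordism hypothesis \cite{baezdolan,lurie}, which underlies our standing assumptions in the Introduction. For the framed case, the first step is to identify ${\rm Bord}^{fr}(n)$ as the \emph{free} symmetric monoidal $(\infty,n)$-category with duals on the single generating object ${\rm pt}^+$. Granting this, the universal property makes evaluation $Z\mapsto Z({\rm pt}^+)$ into an equivalence from ${\rm Fun}^{\otimes}({\rm Bord}^{fr}(n), n\text{-}\mathrm{Vect})$ onto the maximal sub-$\infty$-groupoid on objects carrying the full duality data, i.e.\ the fully dualizable objects. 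I would also check that the source is automatically an $\infty$-groupoid: any monoidal natural transformation between functors out of a category with duals has adjointable, hence invertible, components, which is what forces the passage to $(n\text{-}\mathrm{Vect})^{(\infty,0)}_{\mathrm{fd}}$ on the right-hand side.

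The hard part --- and the step I expect to be the genuine obstacle --- is proving the freeness of ${\rm Bord}^{fr}(n)$. Here I would follow \cite{lurie}: induct on the dimension $n$, use Morse theory and handle decompositions to present an arbitrary framed bordism as a composite of elementary pieces, each $k$-handle attachment being a unit or counit of an adjunction, and then use the essential uniqueness of adjoints in a higher category to show that a monoidal functor is rigidly pinned down by its value on ${\rm pt}^+$ together with the coherent duality data this value is forced to carry. Making any of this fully rigorous rests on the foundational results about symmetric monoidal $(\infty,n)$-categories that we have assumed, so in the body of the paper I would treat \eqref{cobframed} as an input rather than reprove it.

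For the passage from \eqref{cobframed} to \eqref{Gcob}, the plan is to use the change-of-framing action of the topological group $O(n)$ on ${\rm Bord}^{fr}(n)$, together with the observation that for any $G\to O(n)$ the $(\infty,n)$-category ${\rm Bord}^G(n)$ is the homotopy quotient ${\rm Bord}^{fr}(n)_{hG}$ --- a $G$-structure being exactly a framing considered up to the $G$-action. Then, combining the universal property of homotopy colimits (mapping out of a colimit turns into a limit) with the $O(n)$-equivariant refinement of \eqref{cobframed}, one obtains
\[
{\rm Fun}^{\otimes}({\rm Bord}^{G}(n), n\text{-}\mathrm{Vect}) \simeq {\rm Fun}^{\otimes}({\rm Bord}^{fr}(n)_{hG}, n\text{-}\mathrm{Vect}) \simeq \bigl({\rm Fun}^{\otimes}({\rm Bord}^{fr}(n), n\text{-}\mathrm{Vect})\bigr)^{hG} \simeq \bigl((n\text{-}\mathrm{Vect})^{(\infty,0)}_{\mathrm{fd}}\bigr)^{hG},
\]
and by definition the last term is the full subcategory $(n\text{-}\mathrm{Vect})^{G}_{\mathrm{fd}}$ of homotopy fixed points for the induced $G$-action, which is the content of \eqref{Gcob}. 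Apart from the framed freeness theorem, the remaining subtlety I would have to be careful with is the equivariant bookkeeping in this last step: it needs a robust model for continuous $O(n)$-actions on $(\infty,n)$-categories, for the homotopy quotient presentation of ${\rm Bord}^G(n)$, and for the resulting homotopy fixed point $\infty$-categories.
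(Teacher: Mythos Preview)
Your proposal is correct and matches the paper's treatment. The paper does not give a proof of this theorem at all: it is quoted as a result of Lurie--Hopkins \cite{lurie} and is explicitly listed among the standing assumptions in the Introduction; the Remark immediately following the statement explains the passage from \eqref{cobframed} to \eqref{Gcob} via the $O(n)$-action on framings and the identification ${\rm Fun}^{\otimes}({\rm Bord}^{fr}(n),n\text{-}\mathrm{Vect})^{G}\simeq {\rm Fun}^{\otimes}({\rm Bord}^{G}(n),n\text{-}\mathrm{Vect})$, which is exactly your homotopy fixed point argument.
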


\begin{remark}
The $G$-action on $(n\text{-}\mathrm{Vect})_{\mathrm{fd}}^{(\infty,0)}$ in Theorem \ref{luriehopkins} is obtained as follows. First, notice that $O(n)$ acts on the $n$-framings of a $k$-dimensional manifold $M$, and hence it gives an action on ${\rm Bord}^{fr}(n)$. Consequently, $O(n)$ acts on ${\rm Fun}^{\otimes}({\rm Bord}^{fr}(n),n\text{-}\mathrm{Vect})$. By the equivalence in eq. (\ref{cobframed}), we obtain an induced action of $O(n)$ on $(n\text{-}\mathrm{Vect})_{\mathrm{fd}}^{(\infty,0)}$ and so, for any homomorphism $G\to{O(n)}$, we have a corresponding $G$-action on $(n\text{-}\mathrm{Vect})_{\mathrm{fd}}^{(\infty,0)}$. The equivalence in eq. (\ref{Gcob}) is then obtained as a consequence of the equivalence between ${\rm Fun}^{\otimes}({\rm Bord}^{fr}(n), \allowbreak n\text{-}\mathrm{Vect})^{G}$ and ${\rm Fun}^{\otimes}({\rm Bord}^{G}(n), n\text{-}\mathrm{Vect})$.
\end{remark}

\begin{example}\label{2-tier}
A fully extended 2-dimensional oriented TQFT $Z$ is the datum of a semisimple Frobenius algebra $A$. To the oriented point ${\rm pt}^{+}$ it is assigned the linear category $\mathrm{Mod}_A$ of finite dimensional right $A$-modules, while the closed oriented 1-manifold $S^1$ is sent to the center of $A$, which is a commutative Frobenius algebra. See \cite{schom} for details. This is consistent with what one should have expected: the looped TQFT $\Omega Z$ is a 2-dimensional Atiyah-Segal TQFT, and these are equivalent to the category of commutative Frobenius algebras; see \cite{kock}. Note, however, that not every 2-dimensional Atiyah-Segal TQFT is obtained a the looping of a fully extended 2-dimensional TQFT, as a commutative Frobenius algebra need not to be semisimple.
\end{example}
\begin{example}\label{groups}
As a particular case of Example \ref{2-tier}, one can show that to any finite group $G$ is associated an extended 2-dimensional TQFT $Z_G$, mapping ${\rm pt}^{+}$ to the category of finite dimensional representations of $G$, and $S^1$ to the algebra $\mathbb{K}[G/\!/G]$ of class functions on $G$. For a review, see \cite{lee}.
\end{example}
%%%
The cobordism hypothesis tells us that the $\infty$-category of fully extended $n$-dimensional TQFTs of moduli level 0 constitutes an $\infty$-groupoid. This is in general no longer true when the moduli level is higher than 0. In particular, this means that if $Z_1$ and $Z_2$ are two TQFTs with moduli level greater than 0, it is possible to have nontrivial (i.e., non-invertible) morphisms between $Z_1$ and $Z_2$, as in Example  \ref{to-be-expanded} below. This possibility will be particularly relevant in the forthcoming sections.
\begin{remark}
A useful mechanism to produce fully extended $n$-dimensional TQFTs of moduli level 1 is to start from a fully extended $(n+1)$-dimensional TQFT of moduli level 0 and consider a truncation, as in Remark \ref{trunc}. If 
$Z_1$ and $Z_2$ are moduli level 0 fully extended $(n+1)$-dimensional TQFTs and 
\begin{equation}
\eta\colon Z_1\bigr\vert_{n}\to Z_2\bigr\vert_{n}
\end{equation}
is a morphism between their $n$-dimensional truncations, then, due to the cobordism hypothesis, $\eta$ will not in general lift to a morphism between $Z_1$ and $Z_2$. At the level of fully extended $(n+1)$-dimensional TQFTs, the morphism $\eta$ can be considered as a \emph{codimension 1 defect}, also known as a \emph{domain wall}. 
\end{remark}
\begin{example}\label{to-be-expanded}
Let $\underline{1}:{\rm Bord}^{or}(2)\to 2\text{-}\mathrm{Vect}$ be the trivial extended 2-dimensional oriented TQFT, which assigns to the oriented point the linear category of finite dimensional vector spaces, to $S^1$ the vector space $\mathbb{K}$, and to closed 2-manifolds the element $1$ in $\mathbb{K}$. Let $Z_G$ be the 2-tier extended 2-dimensional oriented TQFT associated with a finite group $G$, see Example \ref{groups}. Then, a morphism $\rho:\underline{1}\bigr\vert_{1}\to Z_{G}\bigr\vert_{1}$ is the datum of a finite dimensional representation $\rho$ of $G$, and in the fully extended 2-dimensional TQFT ``with defects'' lifting it, the representation $\rho$ becomes a domain wall and the cylinder \\
\begin{center}
\def\svgwidth{150pt}
\begingroup%
  \makeatletter%
  \providecommand\color[2][]{%
    \errmessage{(Inkscape) Color is used for the text in Inkscape, but the package 'color.sty' is not loaded}%
    \renewcommand\color[2][]{}%
  }%
  \providecommand\transparent[1]{%
    \errmessage{(Inkscape) Transparency is used (non-zero) for the text in Inkscape, but the package 'transparent.sty' is not loaded}%
    \renewcommand\transparent[1]{}%
  }%
  \providecommand\rotatebox[2]{#2}%
  \ifx\svgwidth\undefined%
    \setlength{\unitlength}{210.99199525bp}%
    \ifx\svgscale\undefined%
      \relax%
    \else%
      \setlength{\unitlength}{\unitlength * \real{\svgscale}}%
    \fi%
  \else%
    \setlength{\unitlength}{\svgwidth}%
  \fi%
  \global\let\svgwidth\undefined%
  \global\let\svgscale\undefined%
  \makeatother%
  \begin{picture}(1,0.50541584)%
    \put(0,0){\includegraphics[width=\unitlength]{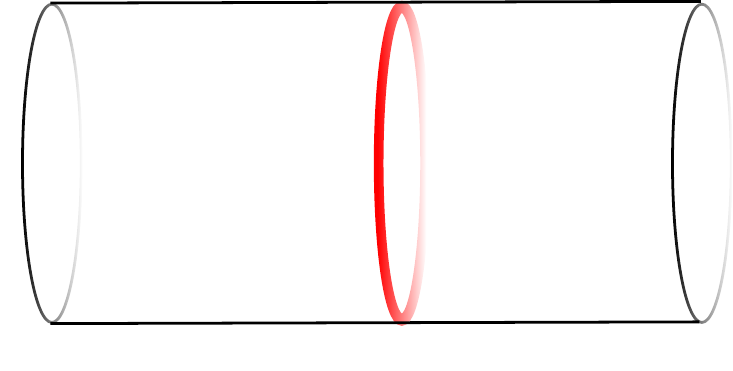}}%
    \put(-0.00001,0.01114527){\color[rgb]{0,0,0}\makebox(0,0)[lb]{\smash{$1$}}}%
    \put(0.53271732,0.01877767){\color[rgb]{0,0,0}\makebox(0,0)[lb]{\smash{$\rho$}}}%
    \put(0.93277679,0.00997219){\color[rgb]{0,0,0}\makebox(0,0)[lb]{\smash{G}}}%
  \end{picture}%
\endgroup%
\end{center}
corresponds to the character of $\rho$. The cylinder equipped with a circle defect depicted above appears in the literature with the name of \emph{transmission functor}, and plays an important role in the study of symmetries of topological quantum field theories \cite{fpsv}.
\end{example}
Since from the literature we are not aware of the any characterization of fully extended TQFTs with moduli level greater than 0, we conclude this section with a conjecture.
\begin{conjecture}[Cobordism hypothesis for TQFTs with moduli level $m$] For any $m\geq 0$
there is an equivalence of $\infty$-categories
\begin{equation}
{\rm Fun}^{\otimes}({\rm Bord}^{G}(n),(m+n)\text{-}\mathrm{Vect})\simeq((m+n)\text{-}\mathrm{Vect})_{\mathrm{fd}}^{G\, (\infty,m)}
\end{equation}
induced by the evaluation functor $Z\to Z({\rm pt}^{+})$. 
\end{conjecture}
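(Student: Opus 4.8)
The plan is to reduce the statement to the framed case $G=\{e\}$ and prove the latter by induction on the moduli level $m$, run with an arbitrary symmetric monoidal $(\infty,m+n)$-category in place of $(m+n)\text{-}\mathrm{Vect}$; the conjecture is then the instance of higher vector spaces. The reduction to the framed case is the one already used in the Remark after Theorem~\ref{luriehopkins}: $O(n)$ acts on ${\rm Bord}^{fr}(n)$ through its action on $n$-framings, hence on ${\rm Fun}^{\otimes}({\rm Bord}^{fr}(n),(m+n)\text{-}\mathrm{Vect})$, and ${\rm Bord}^{G}(n)$ is the associated homotopy quotient, so that one expects a natural equivalence
\begin{equation}
{\rm Fun}^{\otimes}({\rm Bord}^{G}(n),(m+n)\text{-}\mathrm{Vect})\simeq {\rm Fun}^{\otimes}({\rm Bord}^{fr}(n),(m+n)\text{-}\mathrm{Vect})^{hG},
\end{equation}
parallel to the passage from (\ref{cobframed}) to (\ref{Gcob}). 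As the $G$-homotopy fixed points of the induced action on the right-hand side of the conjecture are by definition $((m+n)\text{-}\mathrm{Vect})_{\mathrm{fd}}^{G\,(\infty,m)}$, it is enough to prove the framed equivalence
\begin{equation}
{\rm Fun}^{\otimes}({\rm Bord}^{fr}(n),(m+n)\text{-}\mathrm{Vect})\simeq((m+n)\text{-}\mathrm{Vect})_{\mathrm{fd}}^{(\infty,m)}
\end{equation}
naturally in the $O(n)$-action, where $(\cdot)_{\mathrm{fd}}$ is read as the full subcategory on the objects dualizable to the depth detected by $n$-dimensional bordisms --- for $m=0$ this is the full dualizability of Theorem~\ref{luriehopkins} --- and $(\cdot)^{(\infty,m)}$ is the underlying $(\infty,m)$-category. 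The base case $m=0$ is Lurie's cobordism hypothesis with values in an arbitrary symmetric monoidal $(\infty,n)$-category (see \cite{lurie}), of which Theorem~\ref{luriehopkins} is the instance for $n$-vector spaces.

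For the inductive step ($m\ge1$) I would show that evaluation at the framed point ${\rm pt}^{+}$ is an equivalence of $(\infty,m)$-categories onto $((m+n)\text{-}\mathrm{Vect})_{\mathrm{fd}}^{(\infty,m)}$. On objects this is the space-level cobordism hypothesis: a framed $n$-dimensional theory of moduli level $m$ is determined, up to a contractible space of choices, by the $n$-dualizable object $Z({\rm pt}^{+})$ of $(m+n)\text{-}\mathrm{Vect}$, and every such object occurs, exactly as in (\ref{cobframed}) but now with a target of higher categorical level. The conceptual reason the target of the equivalence is an $(\infty,m)$-category rather than an $(\infty,m+n)$-category is the interplay between the $n$-fold duality of ${\rm pt}^{+}$ and the top-invertibility of $(m+n)\text{-}\mathrm{Vect}$: a $j$-morphism $\eta$ between framed theories is invertible as soon as its component $\eta_{{\rm pt}^{+}}$ is, since ${\rm Bord}^{fr}(n)$ is generated under duals by ${\rm pt}^{+}$ and hence morphisms of theories are pinned down, up to contractible choice, by their value at ${\rm pt}^{+}$; and, ${\rm pt}^{+}$ being $n$-dualizable, $\eta_{{\rm pt}^{+}}$ carries $n$ successive layers of adjunction data whose topmost units and counits are $(j+n)$-morphisms of $(m+n)\text{-}\mathrm{Vect}$, hence invertible when $j>m$ because $j+n>m+n$; unwinding the adjunctions then forces $\eta_{{\rm pt}^{+}}$, and so $\eta$, to be invertible. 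For $m=0$ this recovers the familiar fact that framed fully extended TQFTs of moduli level $0$ form an $\infty$-groupoid.

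To complete the inductive step one must match the possibly non-invertible morphisms in degrees $1\le j\le m$. For framed theories $Z_{1},Z_{2}$ I would identify the $\infty$-category of morphisms between them inside ${\rm Fun}^{\otimes}({\rm Bord}^{fr}(n),(m+n)\text{-}\mathrm{Vect})$ with ${\rm Fun}^{\otimes}({\rm Bord}^{fr}(n),\mathcal{D}')$ for a suitable symmetric monoidal $(\infty,m+n-1)$-category $\mathcal{D}'$ built out of $(m+n)\text{-}\mathrm{Vect}$ together with the objects $Z_{1}({\rm pt}^{+})$ and $Z_{2}({\rm pt}^{+})$ --- the analogue of the homotopy-pullback description of $\Omega\mathcal{D}$ exploited in Lemma~\ref{endo} and Lemma~\ref{dlemma}, with the trivial functor replaced by $Z_{1}$ and $Z_{2}$ and the square now lax rather than strict. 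Since $\mathcal{D}'$ is of categorical level $m+n-1=(m-1)+n$, the inductive hypothesis applies and identifies this $\infty$-category with $(\mathcal{D}')_{\mathrm{fd}}^{(\infty,m-1)}$, which in turn one recognises as the mapping $(\infty,m-1)$-category of $((m+n)\text{-}\mathrm{Vect})_{\mathrm{fd}}^{(\infty,m)}$ from $Z_{1}({\rm pt}^{+})$ to $Z_{2}({\rm pt}^{+})$. Combined with the bijection on objects, this makes evaluation at ${\rm pt}^{+}$ fully faithful and essentially surjective onto $((m+n)\text{-}\mathrm{Vect})_{\mathrm{fd}}^{(\infty,m)}$, hence an equivalence of $(\infty,m)$-categories; passing to $G$-homotopy fixed points then gives the conjecture for every $G\to O(n)$.

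The delicate part of this programme is the identification of $\mathcal{D}'$, i.e., the computation of the whole higher-categorical mapping structure of the functor category ${\rm Fun}^{\otimes}({\rm Bord}^{fr}(n),(m+n)\text{-}\mathrm{Vect})$ and not merely of its objects. This rests on a robust theory of lax natural transformations between symmetric monoidal $\infty$-functors --- one of our standing assumptions --- and, more substantially, on a form of the cobordism hypothesis that classifies field theories together with their defects, equivalently the cobordism hypothesis for manifolds with corners and singularities; this is precisely the input whose rigorous foundations are not yet available, which is why the statement is recorded only as a conjecture. For $n\le 2$, by contrast, the functor categories in play can be treated entirely within the language of bicategories, where these relative mapping objects and the needed classifications are under control, so the conjecture does hold unconditionally in that range.
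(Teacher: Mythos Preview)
The paper does not prove this statement: it is explicitly labelled a \emph{Conjecture}, introduced with the sentence ``Since from the literature we are not aware of any characterization of fully extended TQFTs with moduli level greater than $0$, we conclude this section with a conjecture.'' The only thing the paper offers in support is the subsequent Example, which checks the conjecture in the single case $n=1$, $m=1$, $G=SO(1)$ by observing that a $2$-morphism $F_{\rho_1}\to F_{\rho_2}$ between morphisms $\underline{1}\bigr\vert_1\to Z_G\bigr\vert_1$ forces $\rho_1$ and $\rho_2$ to have the same character, hence to be isomorphic. There is therefore no proof in the paper to compare your proposal against.

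Your proposal is not a proof either, and you say so yourself in the final paragraph; it is a plausible programme. The reduction to the framed case and the base $m=0$ are unproblematic (they are exactly the mechanism in the Remark following Theorem~\ref{luriehopkins}), and the heuristic for why morphisms of degree $j>m$ must be invertible is the right one. The genuine gap is the inductive step: you want to identify $\mathrm{Mor}(Z_1,Z_2)$ inside $\mathrm{Fun}^{\otimes}(\mathrm{Bord}^{fr}(n),(m+n)\text{-}\mathrm{Vect})$ with $\mathrm{Fun}^{\otimes}(\mathrm{Bord}^{fr}(n),\mathcal{D}')$ for a suitable $(\infty,m+n-1)$-category $\mathcal{D}'$. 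The analogy with Lemmas~\ref{endo} and~\ref{dlemma} is suggestive but those lemmas treat only the case $Z_1=Z_2=\underline{1}$, where the answer is literally a loop space; for generic $Z_1,Z_2$ the morphisms are \emph{lax} natural transformations, and packaging those as a symmetric monoidal functor category into a single target $\mathcal{D}'$ is exactly the content of the cobordism hypothesis with singularities (domain walls), which you correctly invoke and which is, as you note, not yet rigorously available in the generality needed. So your outline is a sensible research strategy that goes well beyond what the paper attempts, but it does not resolve the conjecture, and the paper makes no claim that it is resolved.
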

In the above conjecture $((m+n)\text{-}\mathrm{Vect})_{\mathrm{fd}}^{G\, (\infty,m)}$ denotes the $(\infty,m)$ groupoid obtained from $((m+n)\text{-}\mathrm{Vect})_{\mathrm{fd}}^{G}$ by discarding all non-invertible $k$-morphisms with $k>m$.
\begin{example}
As a supporting evidence for the above conjecture, let us expand Example \ref{to-be-expanded} above. In the same notations as in Example \ref{to-be-expanded}, we have seen that any finite dimensional representation $\rho$ of $G$ gives rise to a 1-morphism $F_\rho$ between the moduli level 1 1-dimensional TQFTs  $\underline{1}\bigr\vert_{1}$ and  $Z_{G}\bigr\vert_{1}$. From conjecture, we should expect that a morphism of representations $f\colon \rho_1\to \rho_2$ induces a 2-morphism $F_{\rho_1}\to F_{\rho_2}$ if and only if $f$ is an isomorphism. This is actually true: looking at the data associated with the 1-dimensional manifold $S^1$,  we see that $F_{\rho_1}\to F_{\rho_2}$  induces a morphism in $\mathbb{K}[G/\!/G]$ between the character of $\rho_1$ and the character of $\rho_2$. But since the only morphisms in the vector space $\mathbb{K}[G/\!/G]$ (seen as a 0-category) are identities, this means that the representations $\rho_1$ and $\rho_2$ have the same character, and therefore they are isomorphic.
\end{example}
%% Section 5
\section{Anomalies in Topological Quantum Field Theories}\label{sec.anomaly}

We consider now a particular type of TQFT, called \emph{invertible}, which will be relevant in the description of anomalies we present later.
\subsection{Invertible TQFTs}
To be able to define invertible TQFTs, we first need to introduce the following
\begin{definition}\label{picard}
 The Picard $\infty$-groupoid ${\rm Pic}(n\mbox{-}{\rm Vect})$ is defined as the $\infty$-category with objects given by the invertible objects in $n\mbox{-}{\rm Vect}$, and $k$-morphisms given by the invertible $k$-morphisms for any $k$.
\end{definition}
Notice that the Picard $\infty$-groupoid ${\rm Pic}(n\mbox{-}{\rm Vect})$ is a symmetric monoidal $(\infty,n)$-subcategory of $n\mbox{-}{\rm Vect}$. Moreover, Definition \ref{picard} can be extended to any symmetric monoidal $(\infty, n)$-category $\mathcal{C}$.
\begin{example} The Picard groupoid ${\rm Pic}(0\mbox{-}{\rm Vect})$ is the group $\mathbb{K}^*$ of invertible elements of the field $\mathbb{K}$, and identities as morphisms. The Picard groupoid  ${\rm Pic}(1\mbox{-}{\rm Vect})$ is the groupoid with objects given by complex vector spaces of dimension 1, 1-morphisms given by invertible linear maps, and identities for $k$-morphisms, for $k>1$. The Picard 2-groupoid ${\rm Pic}(2\mbox{-}{\rm Vect})$ can be realized as the 2-groupoid with objects given by ${\rm Vect}$-module categories of rank 1, 1-morphisms given by invertible module functors, 2-morphisms given by invertible module natural transformation, and identities for higher $k$-morphisms. See \cite{eno}.
\end{example}
An invertible TQFT is essentially an $\infty$-functor assigning objects to invertible objects, and morphisms to invertible morphisms. More precisely
\begin{definition}
An $n$-dimensional Topological Quantum Field Theory extended to codimension $k$ and with moduli level $m$ 
\begin{equation}
Z\colon\mathrm{Cob}_k^\infty(n)\to (m+k)\mbox{-}{\rm Vect}
\end{equation}
is said to be invertible iff it factors as
\begin{equation}
\xymatrix{\mathrm{Cob}_k^\infty(n)%\ar@{.>}
\ar[dr]\ar[r]^{Z}&(m+k)\mbox{-}{\rm Vect}\\
&\ar[u]{\rm Pic}((m+k)\mbox{-}{\rm Vect})}
\end{equation}
%%
%the dashed arrow can be filled uniquely.
\end{definition}
From every symmetric monoidal $(\infty, n)$-category $\mathcal{C}$ one obtains a symmetric monoidal $(\infty, n+1)$-category $B\mathcal{C}$ by taking the $\infty$-category with a single object, and with $\mathcal{C}$ as the $\infty$-category of morphisms. It is immediate to see that $B{\rm Pic}(n\mbox{-}{\rm Vect})$ is naturally identified with the full subcategory of ${\rm Pic}((n+1)\mbox{-}{\rm Vect})$ on the tensor unit of $(n+1)\mbox{-}{\rm Vect}$. This gives a natural embedding
\begin{equation}
B{\rm Pic}(n\mbox{-}{\rm Vect})\hookrightarrow {\rm Pic}((n+1)\mbox{-}{\rm Vect}).
\end{equation}
This observation leads us to the following
\begin{definition}
An invertible TQFT with moduli level $m$ 
\begin{equation}
Z\colon \mathrm{Cob}_k^\infty(n)\to{\rm Pic}((m+k)\mbox{-}{\rm Vect})\hookrightarrow (m+k)\mbox{-}{\rm Vect}
\end{equation}
 is said to be \emph{semitrivialized} if it is given a factorization of $Z$ through $B{\rm Pic}((m+k-1)\mbox{-}{\rm Vect})$.
\end{definition}
\begin{remark}\label{semitrivializable}
For $m+k=1,2$ the inclusion $B{\rm Pic}((m+k-1)\mbox{-}{\rm Vect})\hookrightarrow {\rm Pic}((m+k)\mbox{-}{\rm Vect})$
is an equivalence of $(m+k)$-groupoids. Therefore, an invertible TQFT with moduli level $m$ can always be (non canonically) semitrivialized as soon as $m+k\leq 2$. It is presently not clear whether this result holds true for $m+k>2$.
\end{remark}
\begin{remark}
An important aspect of invertible TQFTs is that they can be described as maps of spectra. Namely, an invertible TQFT factorizes through the ``groupoid $\infty$-completion'' $|\mathrm{Cob}_k^\infty(n)|$, which can be proven to be a spectrum in low dimensions. See \cite{freed1,freed2} for details.\\ 
We will not push in this direction in the present article.

\end{remark}
\par
\subsection{Anomalies} Invertible TQFTs of moduli level 1 will be particularly relevant to the present work: they will indeed describe anomalies.
\begin{definition}
An $n$-dimensional \emph{anomaly} is an invertible TQFT of moduli level 1
\begin{equation}
W\colon \mathrm{Cob}_k^\infty(n)\to{\rm Pic}((k+1)\mbox{-}{\rm Vect})\hookrightarrow (k+1)\mbox{-}{\rm Vect}.
\end{equation}
\end{definition}
\begin{remark}\label{restriction}
A natural way of producing an $n$-dimensional anomaly is by \emph{truncating} a $(n+1)$-dimensional TQFT with moduli level 0, i.e., by considering the composition
\begin{equation}
\mathrm{Cob}_k^\infty(n)\hookrightarrow \mathrm{Cob}_{k+1}^\infty(n+1) \to{\rm Pic}((k+1)\mbox{-}{\rm Vect})
\hookrightarrow (k+1)\mbox{-}{\rm Vect}.
\end{equation}
\end{remark}
\begin{example}\label{semitrivializedW}
Let us make explicit the data of a semitirivialized $n$-dimensional anomaly for $k=1$. By definition, this is
a symmetric monoidal functor
\begin{equation}
W\colon \mathrm{Cob}_1^\infty(n)\to B{\rm Pic}(1\mbox{-}{\rm Vect})\hookrightarrow {\rm Pic}(2\mbox{-}{\rm Vect})\hookrightarrow 2\mbox{-}{\rm Vect}.
\end{equation}
Therefore, to each $n$-dimensional cobordism $M$ a complex line $W_{M}$ is assigned, together with an isomorphism $W_{M\circ{M'}}\simeq W_{M}\otimes W_{M'}$, whenever $M\circ{M'}$ exists. This isomorphism, which we denote with $\psi_{MM'}$, is part of the structure of $W$, and hence has to obey the natural coherence conditions. In particular, to the trivial cobordism $\Sigma\times [0,1]$ is assigned the complex vector space $\mathbb{C}$. 
\end{example}
\begin{remark}\label{charactanom}
Recall from Remark \ref{gammaenne} that $B\Gamma^{\infty}(\Sigma)$ denotes the $\infty$-groupoid associated to $\mathrm{Diff}(\Sigma)$,\footnote{Here we are omitting the explicit reference to the framing $G\to O(n)$: the manifold $\Sigma$ here is (as always in this article) endowed with a $G$-framing of its stabilised tangent bundle, and $\mathrm{Diff}(\Sigma)$ denotes the group of $G$-framing preserving diffeomorphisms of $\Sigma$.} namely $B\Gamma^{\infty}(\Sigma)$ is the connected component of $\Sigma$ in 
%=\Omega_{\Sigma}
${\rm Cob}^{\infty}_{0}(n-1)$.  Let $W$ be as in Example \ref{semitrivializedW}. By the mapping cylinder construction, we have the $\infty$-functor
\begin{equation}
B\Gamma^\infty(\Sigma)\hookrightarrow \mathrm{Cob}_0^\infty(n-1)\hookrightarrow \mathrm{Cob}_1^\infty(n)\xrightarrow{} B{\rm Pic}(1\mbox{-}{\rm Vect})
\end{equation}
where the last arrow is given by the factorisation of $W$ through $B{\rm Pic}(1\mbox{-}{\rm Vect})$.
In the terminology of Section \ref{twochar}, $W$ gives rise to a 2-character for $\Gamma^\infty(\Sigma)$.
\end{remark}
We can now introduce the definition of anomalous TQFTs with given anomaly $W$. These are called $W$-twisted field theories in \cite{stolz-teichner} and relative field theories in \cite{ft}.
\begin{definition}\label{def-anomaly}
Let $W\colon \mathrm{Cob}_k^\infty(n)\to{\rm Pic}((k+1)\mbox{-}{\rm Vect})\hookrightarrow (k+1)\mbox{-}{\rm Vect}$ be an $n$-dimensional anomaly. An anomalous $n$-dimensional extended TQFT with anomaly $W$ is a morphism of $n$-dimensional TQFTs with moduli level 1
\begin{equation}
Z_W\colon \underline{1}\to W,
\end{equation}
where $\underline{1}\colon \mathrm{Cob}_k^\infty(n)\to(k+1)\mbox{-}{\rm Vect}$ is the trivial TQFT mapping all objects to the monoidal unit and all morphisms to identities.
\end{definition}

\begin{lemma}
Let $W$ be the trivial $n$-dimensional anomaly, i.e., let $W=\underline{1}$. Then an $n$-dimensional extended anomalous TQFT with anomaly $W$ is equivalent to an ordinary $n$-dimensional extended TQFT. 
\end{lemma}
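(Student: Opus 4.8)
The plan is to identify the $\infty$-category of anomalous TQFTs with anomaly $W=\underline{1}$, i.e., the $\infty$-category $\mathrm{End}_{\mathrm{Fun}^{\otimes}(\mathrm{Cob}_k^\infty(n),(k+1)\text{-}\mathrm{Vect})}(\underline{1})$ of morphisms $\underline{1}\to\underline{1}$, with the $\infty$-category of ordinary extended TQFTs, namely $\mathrm{Fun}^{\otimes}(\mathrm{Cob}_k^\infty(n),k\text{-}\mathrm{Vect})$. The key observation is that this is exactly an instance of Lemma \ref{endo}: set $\mathcal{C}=\mathrm{Cob}_k^\infty(n)$, which is a symmetric monoidal $(\infty,k)$-category, and $\mathcal{D}=(k+1)\text{-}\mathrm{Vect}$, which is a symmetric monoidal $(\infty,k+1)$-category. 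Lemma \ref{endo} then gives a natural equivalence
\begin{equation}
\mathrm{End}_{\mathrm{Fun}^{\otimes}(\mathrm{Cob}_k^\infty(n),(k+1)\text{-}\mathrm{Vect})}(\underline{1}_{(k+1)\text{-}\mathrm{Vect}})\simeq \mathrm{Fun}^{\otimes}(\mathrm{Cob}_k^\infty(n),\Omega((k+1)\text{-}\mathrm{Vect})).
\end{equation}

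Next I would invoke the standing assumption (recorded in the introduction, and in the Example after Lemma \ref{dlemma}) that $\Omega((k+1)\text{-}\mathrm{Vect})\simeq k\text{-}\mathrm{Vect}$ as symmetric monoidal $(\infty,k)$-categories. Composing with the displayed equivalence yields
\begin{equation}
\mathrm{End}_{\mathrm{Fun}^{\otimes}(\mathrm{Cob}_k^\infty(n),(k+1)\text{-}\mathrm{Vect})}(\underline{1})\simeq \mathrm{Fun}^{\otimes}(\mathrm{Cob}_k^\infty(n),k\text{-}\mathrm{Vect}),
\end{equation}
and the right-hand side is by definition the $\infty$-category of $n$-dimensional TQFTs extended down to codimension $k$ with moduli level $0$, i.e., ordinary extended TQFTs. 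Since by Definition \ref{def-anomaly} an anomalous TQFT with anomaly $\underline{1}$ is precisely an object of the left-hand side (a morphism $\underline{1}\to\underline{1}$), this establishes the claimed equivalence, and one should note it is in fact an equivalence of the full $\infty$-categories of such objects, not merely a bijection on equivalence classes.

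The one point requiring a little care — and what I expect to be the main (mild) obstacle — is checking that the trivial functor $\underline{1}$ appearing in Definition \ref{def-anomaly} really is the same as the composite $\mathcal{C}\to\mathbf{1}\to\mathcal{D}$ used in the statement and proof of Lemma \ref{endo}. This is immediate from the descriptions: $\underline{1}$ sends every object of $\mathrm{Cob}_k^\infty(n)$ to the monoidal unit of $(k+1)\text{-}\mathrm{Vect}$ and every morphism to an identity, which is exactly the factorization through the trivial monoidal category $\mathbf{1}$. A second, even milder, bookkeeping point is that ``anomalous TQFT with anomaly $W$'' in Definition \ref{def-anomaly} is a morphism in $\mathrm{Fun}^{\otimes}$ with moduli level $1$, whereas after looping we land in moduli level $0$; this shift of moduli level is precisely the content of $\Omega((k+1)\text{-}\mathrm{Vect})\simeq k\text{-}\mathrm{Vect}$ together with the fact that looping does not change $n$ or the codimension $k$ of the cobordism category, and is already the mechanism exploited in Remark \ref{trunc}. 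No genuinely new input beyond Lemma \ref{endo} and the standing assumptions is needed.
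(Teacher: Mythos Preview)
Your proposal is correct and follows exactly the paper's approach: the paper's proof is the single line ``Immediate from Lemma \ref{endo}'', and you have simply unpacked this by specializing Lemma \ref{endo} to $\mathcal{C}=\mathrm{Cob}_k^\infty(n)$, $\mathcal{D}=(k+1)\text{-}\mathrm{Vect}$ and then invoking $\Omega((k+1)\text{-}\mathrm{Vect})\simeq k\text{-}\mathrm{Vect}$. The additional bookkeeping remarks you make (identifying $\underline{1}$ with the factorization through $\mathbf{1}$, and the shift in moduli level) are accurate and harmless elaborations of what the paper leaves implicit.
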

\begin{proof}
Immediate from Lemma \ref{endo}. 
\begin{remark} Strictly speaking, we have defined above a TQFT with \emph{incoming} anomaly, and one could also consider \emph{outgoing} anomalies by taking morphisms $W\to \underline{1}$, see, e.g., \cite{ft}. Although this distinction is relevant, e.g., for oriented theories, where one can also have both kinds of anomalies at the same time, we will not elaborate on this here.
\end{remark}

To get the flavour of these TQFTs with anomaly, let us spell out the data of an $n$-dimensional TQFT with semitrivialized anomaly in the $k=1$ case. As expected, we obtain a structure resembling an $n$-dimensional TQFT a l\'{a} Atiyah-Segal, but with a ``twisting'' coming from the anomaly $W$. Namely, if 
\begin{equation}
W\colon \mathrm{Cob}_1^\infty(n)\to B{\rm Pic}(1\mbox{-}{\rm Vect})\hookrightarrow {\rm Pic}(2\mbox{-}{\rm Vect})\hookrightarrow 2\mbox{-}{\rm Vect}
\end{equation}
is a semitrivialized anomaly, then a morphism $Z_W\colon \underline{1}\to W$ consists of the following collection of data:
\begin{itemize}
\item[a)]To each closed $(n-1)$-dimensional manifold $\Sigma$ it is assigned a vector space $V_{\Sigma}$, with $V_{\emptyset}\simeq {\mathbb{K}}$ and with functorial isomorphisms $V_{\Sigma\sqcup{\Sigma'}}\simeq  V_{\Sigma}\otimes V_{\Sigma'}$;
\item[b)]To each cobordism $M$ between $\Sigma$ and $\Sigma'$ it is assigned a linear map $\varphi_{M}: W_{M}\otimes V_{\Sigma}\to V_{\Sigma'}$; for $M$ the trivial cobordism, the corresponding linear map is the natural isomorphism $\varphi_{\Sigma\times [0,1]}: \mathbb{K}\otimes V_{\Sigma}\to V_{\Sigma}$.
\end{itemize}
Moreover, these data satisfy the following compatibilities:
\begin{itemize}
\item[i)] Let  $f_{MM'}:M\to{M'}$ be a diffeomorphism fixing the boundaries between two cobordisms $M$ and $M'$ between $\Sigma$ and $\Sigma'$. Then the following diagram commutes:
\begin{equation}\label{anom1}
\xymatrix{
W_{M}\otimes{V_{\Sigma}}\ar[r]^-{\varphi_{M}}\ar[d]_{f_{MM'*}\otimes{\rm id}} & V_{\Sigma'}\\
 W_{M'}\otimes{V_{\Sigma}}\ar[ru]_{\varphi_{M'}}
}
\end{equation}
where $f_{MM'*}:W_M\to W_{M'}$ denotes the isomorphism induced by $f_{MM'}$. 
\item[ii)] For any cobordism $M$ between $\Sigma$ and $\Sigma'$, and $M'$ between $\Sigma'$ and $\Sigma''$, the following diagram commutes
\begin{equation}\label{anom2}
\xymatrix{
W_{M'}\otimes W_{M}\otimes V_{\Sigma}\ar[r]^-{{\rm id}\otimes \varphi_{M}}\ar[d]^{\wr}_{\psi_{M'M}\otimes\mathrm{id}} & \ar[d]^{\varphi_{M'}}W_{M'}\otimes  {V_{\Sigma'}}\\
W_{M'\circ M}\otimes V_{\Sigma}\ar[r]^-{\varphi_{M'\circ M}}& {V_{\Sigma''}}
}
\end{equation}
%%%
\end{itemize}
In general, an anomalous TQFT as defined above will give rise to projective representations of diffeomorphsims of closed manifolds. In order to give a precise statement, in the following section we will take a detour into projective representations of $\infty$-groups as homotopy fixed points of higher characters.\\
\end{proof}
%%
%%Section 6
\section{$n$-characters and projective representations}\label{twochar}
In this section we will introduce the notion of an $n$-character for an $\infty$-group (e.g., the Poincar\'e $\infty$-groupoid $\pi_{\leq \infty}(G_{\mathrm{top}})$ of a topological group $G_{\mathrm{top}}$), and its homotopy fixed points. This is a natural higher generalisation of the notion of a $\mathbb{C}^{*}$-group character. Hence, as a warm up, we will first discuss the case of a discrete group $G$, and show how this recovers the category of (finite dimensional) projective representations of $G$. This is well known in geometric representation theory, but since we are not able to point the reader to a specific treatment in the literature, we will provide the necessary amount of detail here.
\subsection{Discrete groups}\label{finite}
Let $G$ be a (discrete) group, and let $BG$ denote the 1-object groupoid with $G$ as group of morphisms, regarded as an $\infty$-groupoid with only identity $k$-morphisms for $k>1$.
\begin{definition}\label{2chardisc} A 2-character for $G$ with values in $\vect$ is a 2-functor
\begin{equation}
\rho: {BG} \to B\pic
\end{equation}
\end{definition}
Explicitly, a 2-character $\rho$ consists of a family of complex lines $W^\rho_{g}$, one for each $g\in{G}$, and  isomorphisms
\begin{equation}
\psi_{g,h}^\rho:{W^\rho_{g}\otimes{W^\rho_{h}}}\xrightarrow{\simeq}W^\rho_{gh}
\end{equation}
satisfying the associativity condition
\begin{equation}\label{assoc}
\psi_{gh,j}^\rho \circ (\psi_{g,h}^\rho\otimes\id)= \psi_{g,hj}^\rho\circ (\id\otimes\psi_{h, j}^\rho)
\end{equation}
for any $g, h, j\in{G}$. When no confusion is possible we will simply write $W_g$ for $W_g^\rho$ and $\psi_{g,h}$ for $\psi^\rho_{g,h}$.\\
For a given group $G$, 2-characters form a category, given by the groupoid $[B{G}, B\pic]$ of functors between $B{G}$ and $B\pic$, and their natural transformations. Explicitly, a morphism $\rho\to \tilde{\rho}$ is a collection of isomorphisms of complex lines $\xi_g\colon W_g \xrightarrow{\sim} \tilde{W}_g$ such that
\[
\psi_{g,h}\circ (\xi_g\otimes \xi_h)=\xi_{gh}\circ \psi_{g,h},
\]
for any $g,h\in G$.\\
\medskip

%\begin{lemma}\label{picaut}
The assignment $W \to W\otimes(-)$ induces an equivalence of groupoids
\begin{equation}\label{eq.picaut}
\pic\simeq\aut,
\end{equation}
where $\aut$ denotes the groupoid of linear auto-equivalences of $\vect$, i.e. of linear invertible functors from $\rm Vect$ to itself.
As a consequence, a 2-character defines an action of $G$ by functors on the linear category $\vect$. As for any action of a group, we can investigate the structure of its fixed points. Since we are in a categorical setting, though, we can ask that points are fixed at most up to isomorphisms. This motivates the following
\begin{definition}\label{fixdef}
Let $\rho=\{W_g;  \psi_{g,h}\}$ be a 2-character for a (discrete) group $G$. A homotopy fixed point for $\rho$ is given by an object $V\in\vect$ and a family $\{\varphi_{g}\}_{g\in{G}}$ of isomorphisms
\begin{equation}
\varphi_{g}: {W_g}\otimes{V}\xrightarrow{\simeq}{V}
\end{equation}
satisfying the compatibility condition
\begin{equation}\label{hofixed}
%(\varphi_{g,h}\otimes{\id})\circ\psi_{gh} = (\id\otimes\psi_{h})\circ\psi_{g}
\varphi_{gh}\circ (\psi_{g,h}\otimes \id) = \varphi_{g}\circ(\id\otimes\varphi_{h})
\end{equation}
\end{definition}
\begin{remark}
A convenient way to encapsulate the data in Definition \ref{fixdef} is the following. By using the equivalence (\ref{eq.picaut}), a 2-character $\rho$ induces a 2-functor $W:BG\to{2\mbox{-}{\rm Vect}}$, which assigns to the single object in $BG$ the category ${\rm Vect}$.\footnote{In other words, $W$ is a 2-representation of $G$ of rank 1.} If we denote by $\underline{1}$ the trivial 2-functor from $BG$ to ${2\mbox{-}{\rm Vect}}$, we have then that a homotopy fixed point is equivalently a morphism, i.e. a natural transformation of 2-functors, $\underline{1}\to W$.  
\end{remark}
\begin{remark} 
Homotopy fixed points for a given 2-character $\rho$ form a category in a natural way, which we denote with $\vect^{\rho}$. %The following lemma assures 
It is immediate to see that, up to equivalence, ${\rm Vect}^{\rho}$ depends only on the isomorphism class of $\rho$.
\end{remark}
In the following, we will show that 2-characters for a group $G$ are related to group 2-cocyles for $G$, and that homotopy fixed points are related to projective representations.\\
Recall that to a group $G$ we can assign its groupoid of group $2$-cocycles with values in $\mathbb{K}^{*}$, which we denote by $\underline{Z}^{2}_{grp}(G;\mathbb{K}^{*})$. This is, essentially by definition, the 2-groupoid 
$[BG, B^{2}\mathbb{K}^{*}]$ of 2-functors from $BG$ to $B^2{\mathbb{K}^*}$. Since $B^2{\mathbb{K}^*}$ is the simplicial set with a single 0-simplex, a single 1-simplex, 2-simplices indexed by elements in $\mathbb{K}^*$, and 3-simplices corresponding to those configurations of 2-simplices the indices of whose boundary faces satisfy the 2-cocycle condition, a 2-functor $F\colon BG\to B^2{\mathbb{K}^*}$%%%
is precisely a group 2-cocycle on $G$ with coefficients in $\mathbb{K}^*$.
\par
The equivalence $B\mathbb{K}^*\xrightarrow{\simeq} \mathrm{Pic}(1\text{-}\mathrm{Vect})$ induces an equivalence $B^2\mathbb{K}^*\xrightarrow{\simeq} B\mathrm{Pic}(1\text{-}\mathrm{Vect})$, and so
an equivalence 
\begin{equation}\label{eq.equivfinite}
T: \underline{Z}^{2}_{grp}(G;\mathbb{K}^{*}) 
%\xrightarrow{\Psi} [BG,B^{2}\mathbb{C}^{*}]
\xrightarrow{\simeq}[BG,B(\mathrm{Pic}(1\text{-}\mathrm{Vect}))]\\
\end{equation}
for any finite group $G$. In particular, every 2-cocycle $\alpha$ naturally induces (and is actually equivalent to) a 2-character $T(\alpha)$. Note that $W^{T(\alpha)}_g=\mathbb{K}$ for any $g\in G$. The morphisms $\psi_{g,h}^{T(\alpha)}:W^{T(\alpha)}_g\otimes W^{T(\alpha)}_h\xrightarrow{\simeq} W^{T(\alpha)}_{gh}$ are given by
\begin{equation}
W^{T(\alpha)}_g\otimes W^{T(\alpha)}_h=\mathbb{K}\otimes\mathbb{K}\cong \mathbb{K}\xrightarrow{\alpha(g,h)}\mathbb{K}=W^{T(\alpha)}_{gh}.
\end{equation}
\medskip

Recall that a \emph{projective representation} for a group $G$ with 2-cocycle $\alpha$ is given by a vector space $V$, and a family of isomorphisms
\begin{equation}
\varphi^\alpha_{g}:V\xrightarrow{\simeq}V, \quad \forall g \in G
\end{equation}
satisfying the condition
\begin{equation}\label{projective}
\varphi^\alpha_{gh}=\alpha(g,h)\,\varphi^\alpha_{g}\circ\varphi^\alpha_{h}, \quad \forall g,h \in G 
\end{equation}
Projective representations for a given 2-cocycle $\alpha$ form naturally a category, which we denote with ${\rm Rep}^{\alpha}(G)$.\\
Given any projective representation $(V,\varphi^\alpha)$  with 2-cocycle $\alpha$, the vector space $V$ is naturally a homotopy fixed point for $T(\alpha)$: consider the family of isomorphisms
\begin{equation}
\varphi_{g}^{T(\alpha)}\colon W^{T(\alpha)}_g\otimes V=\mathbb{K}\otimes{V}\cong V\xrightarrow{\varphi_g^\alpha}V, \quad \forall g \in G.
\end{equation}
%%
%obtained by composing $\psi_{g}$ with che canonical isomorphism ${\mathbb{C}\otimes{V}}\simeq V$. 
Then condition (\ref{projective}) assures that the family of isomorphisms $\{\varphi_{g}^{T(\alpha)}\}$ realises $V$ as a homotopy fixed point for $T(\alpha)$. It is immediate to check that this construction is functorial and therefore defines a ``realisation as homotopy fixed point'' functor $H^{\alpha}\colon{\rm Rep}^{\alpha}(G)  \to \vect^{T(\alpha)}$, for any 2-cocycle $\alpha$.
\begin{lemma}\label{lemma.hrealization}
The functor $H^\alpha\colon{\rm Rep}^{\alpha}(G)  \to \vect^{T(\alpha)}$
is an equivalence of categories.
\end{lemma}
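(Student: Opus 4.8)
The plan is to show that $H^\alpha$ is essentially surjective and fully faithful. The key observation is that for the 2-character $T(\alpha)$ one has $W^{T(\alpha)}_g = \mathbb{K}$ for every $g\in G$, with the structure isomorphisms $\psi^{T(\alpha)}_{g,h}$ given by multiplication by $\alpha(g,h)$. Hence an object of $\vect^{T(\alpha)}$ is a vector space $V$ together with a family of isomorphisms $\varphi_g\colon \mathbb{K}\otimes V\xrightarrow{\simeq} V$; using the canonical identification $\mathbb{K}\otimes V\cong V$, each $\varphi_g$ is precisely an automorphism $\bar\varphi_g$ of $V$, and the homotopy-fixed-point condition \eqref{hofixed} unwinds, under the same identification and the formula for $\psi^{T(\alpha)}_{g,h}$, into exactly the projective-representation relation $\bar\varphi_{gh} = \alpha(g,h)\,\bar\varphi_g\circ\bar\varphi_h$ of \eqref{projective}. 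This already produces a candidate inverse functor on objects: send $(V,\{\varphi_g\})$ to $(V,\{\bar\varphi_g\})$. In particular $H^\alpha$ is essentially surjective (indeed bijective on objects, once the canonical identification $\mathbb{K}\otimes V\cong V$ is fixed).

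For full faithfulness, I would spell out morphisms on both sides. A morphism $(V,\varphi^\alpha)\to (V',\varphi'^{\alpha})$ in ${\rm Rep}^\alpha(G)$ is a linear map $f\colon V\to V'$ with $f\circ\varphi^\alpha_g = \varphi'^{\alpha}_g\circ f$ for all $g$. A morphism in $\vect^{T(\alpha)}$ between the corresponding homotopy fixed points is a linear map $f\colon V\to V'$ compatible with the families $\{\varphi_g\}$, i.e. making the evident square with $W^{T(\alpha)}_g\otimes(-) = \mathbb{K}\otimes(-)$ commute. Under the identification $\mathbb{K}\otimes V\cong V$ this compatibility condition is literally $f\circ\bar\varphi_g = \bar\varphi'_g\circ f$, so the two sets of morphisms coincide, and $H^\alpha$ acts as the identity on them. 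Thus $H^\alpha$ is fully faithful.

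I do not expect any serious obstacle here; the only point requiring mild care is bookkeeping of the coherence unitors in $\vect$, i.e. checking that the canonical isomorphisms $\mathbb{K}\otimes V\cong V$ and $\mathbb{K}\otimes\mathbb{K}\cong\mathbb{K}$ used to pass between the two descriptions are mutually compatible so that \eqref{hofixed} really becomes \eqref{projective} on the nose rather than up to a scalar. Since $\vect$ is the symmetric monoidal category of vector spaces with its standard (strictifiable) coherence data, this is routine. Once objects and morphisms are matched as above, $H^\alpha$ is an equivalence (in fact an isomorphism) of categories, which proves the lemma.
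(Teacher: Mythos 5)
Your proposal is correct and follows essentially the same route as the paper: the paper likewise notes that full faithfulness is immediate and proves essential surjectivity by precomposing $\varphi_g$ with the canonical identification $V\cong\mathbb{K}\otimes V=W^{T(\alpha)}_g\otimes V$ to recover a projective representation from a homotopy fixed point. Your extra remarks on unitor bookkeeping and on morphisms matching on the nose only make explicit what the paper leaves implicit.
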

\begin{proof}
It is immediate to see that $H^\alpha$ is faithful and full. To see that it is essentially surjective, take a homotopy fixed point $(V,\varphi)$ for $T(\alpha)$, and define $\varphi^\alpha$ as
\begin{equation}
\varphi^\alpha_g\colon V\cong \mathbb{K}\otimes V=W^{T(\alpha)}_g\otimes V\xrightarrow{\varphi_g}V.
\end{equation}
 Then the compatibility condition (\ref{hofixed}) ensures then that $(V,\varphi^\alpha)$ is a projective representation with 2-cocycle $\alpha$, with $H^\alpha(V,\varphi^\alpha)\simeq (V,\varphi)$.
\end{proof}
\subsection{2-characters for $\infty$-groups}
In this subsection we will see how the notion of a 2-character for a finite group immediately generalises to the notion of $(n+1)$-character for an $\infty$-group (i.e., for a monoidal $\infty$-groupoid whose objects are invertible for the monoidal structure) $G$, for any $n\geq 0$.\\
Since an $\infty$-group $G$ is in particular a monoidal $\infty$-category, it has a classifying monoidal $\infty$-category $BG$. The fact that $G$ is not just any monoidal $\infty$-category but an $\infty$-group can then be expressed by saying that $BG$ is a one-object $\infty$-groupoid.
The $\infty$-group structure on $G$ induces a (discrete) group structure on the set $\pi_0(G)$ of the isomorphism classes of objects of $G$, and one has a natural equivalence of groupoids  $B\pi_0(G)\cong \pi_{\leq 1}BG$.

\begin{example}
The basic example of an $\infty$-group is the fundamental $\infty$-groupoid of a topological group $G_{\mathrm{top}}$. Namely, since $G_{\mathrm{top}}$ is a group, the $\infty$-groupoid $\pi_{\leq \infty}(G_{\mathrm{top}})$ has a natural monoidal structure for which all the objects are invertible, given by the product in $G_{\mathrm{top}}$. Moreover, one has $\pi_0(\pi_{\leq \infty}(G_{\mathrm{top}}))=\pi_0(G_{\mathrm{top}})$, the (discrete) group of (path-)connected components of the topological group $G_{\mathrm{top}}$.
\end{example}
\begin{example}
A second fundamental example of an $\infty$-group is the $\infty$-group $\Gamma^\infty(\Sigma)$ of diffeomorphisms of a smooth manifold $\Sigma$. Here the objects are the diffeomorphisms of $\Sigma$, 1-morphisms are isotopies between diffeomorphism, 2-morphism are isotopies between isotopies, and so on. For oriented manifolds one can analogously consider the $\infty$-group of oriented diffeomorphisms, and more generally for $G$-framed manifolds one can consider the $\infty$-group of $G$-framed diffeomorphisms. The $\pi_0$ of the $\infty$-group $\Gamma^\infty(\Sigma)$ is the mapping class group $\Gamma(\Sigma)$ of the ($G$-framed) manifold $\Sigma$.
\end{example}
\begin{definition} 
Let $G$ be an $\infty$-group. A $n+1$-character for $G$ is a $\infty$-functor
\begin{equation}
\rho: BG \to B({\rm Pic}(n\mbox{-}{\rm Vect}))
\end{equation}
\end{definition}
The definition given above is very flexible and compact, and can be easily generalised by taking an arbitrary symmetric monoidal $(\infty,n)$-category in place of $n$-Vect.
\begin{remark}\label{2car} A 2-character for an $\infty$-group contains (in general) more information than a 2-character for a discrete group (which can be seen as a very particular case of an $\infty$-group). Namely,
for $G$ an $\infty$-group, a 2-character $\rho$ is given by an assignment to each object $g\in G$ of a complex line $W_{g}$, of a family $\psi_{g,h}$ of isomorphisms 
\begin{equation}
\psi_{g,h}:{W_{g}\otimes W_{h}} \xrightarrow{\sim}W_{gh}, \quad\forall{g,h}\in{G}
\end{equation}
and of isomorphisms
\begin{equation}
\psi_{f}:W_{g}\to{W_{h}}
\end{equation}
for any path (i.e., 1-morphism) $f$ connecting $g$ to $h$. The above isomorphisms must obey coherence conditions which encode the fact that $\rho$ is an $\infty$-functor. In particular, the isomorphism $\psi_f$ depends only on the isomorphism class of the 1-morphism $f$. In the particular case of a discrete group, the only paths in $G$ are the identities and one is reduced to Definition \ref{2chardisc}.
\end{remark}
\begin{example}\label{example.lie}
Let $G_{\mathrm{Lie}}$ be a Lie group, and let $L$ be a multiplicative line bundle over $G_{\mathrm{Lie}}$, equipped with a compatible flat connection $\nabla$. From $L$ one obtains a 2-character $\rho$ for $\pi_{\leq \infty}(G_{\mathrm{Lie}})$ as follows: to each $g$ in $G_{\mathrm{Lie}}$, one assigns the vector space given by the fiber $L_{g}$, and for each path $\gamma$ connecting $g$ and $h$ one takes the isomorphism $\psi_\gamma:L_{g}\to{L_{h}}$ induced by the connection via parallel transport (this depends only the homotopy class of $\gamma$, since $\nabla$ is flat). Finally, the fact that $L$ is multiplicative and the compatibility of $\nabla$ with the multiplicative structure imply that this assignment does define a 2-character. 
 \end{example}
\medskip

For any $n$, the $(n+1)$-group $\mathrm{Pic}(n\text{-}\mathrm{Vect})$ acts $(n+1)$-linearly on $n\text{-}\mathrm{Vect}$. This means that any $(n+1)$-character $\rho\colon BG\to B\mathrm{Pic}(n\text{-}\mathrm{Vect})
$ can naturally be seen as an $\infty$-functor $W\colon BG \to (n+1)\text{-}\mathrm{Vect}$, mapping the unique object of $BG$ to $n\text{-}\mathrm{Vect}$. We 
%%
%In the following, by slight abuse of notation we 
will denote by $\underline{1}\colon BG\to (n+1)\text{-}\mathrm{Vect}$ the trivial $\infty$-functor, mapping the unique object of $BG$ to the monoidal unit of $(n+1)\text{-}\mathrm{Vect}$ (i.e., to $n\text{-}\mathrm{Vect}$), and all morphisms in $BG$ to identities. 
\par
Having introduced this notation, we can give the following definition of homotopy fixed point for an $(n+1)$-character, generalizing the definition of homotopy fixed points for a 2-character of a discrete group seen above.
\begin{definition}
Let $\rho$ be an $(n+1)$-character for an $\infty$-group $G$, and let $W\colon BG\to (n+1)\text{-}\mathrm{Vect}$ be the corresponding $\infty$-functor. A homotopy fixed point for $\rho$ is a morphism of $\infty$-functors $\underline{1}\to W$.
\end{definition}
Homotopy fixed points for a $(n+1)$-character $\rho$ form naturally an $n$-category, which we denote by $n$-${\rm Vect}^{\rho}$.
\begin{remark}\label{remark-homotopy-fixed}
Since a 2-character for a $\infty$-group contains more information than a 2-character for a discrete group (see Remark \ref{2car}), being a homotopy fixed point is a more restrictive condition (in general) in the $\infty$-group case. Namely, with respect to the compatibility conditions in Definition \ref{fixdef}, one has in addition that the following diagram  
\begin{equation}\label{forcesholonomyzero}
\xymatrix{
W_{g}\otimes{V}\ar[r]^{\varphi_{g}}\ar[dr]_{\psi_f\otimes{\rm id}} & V\\
& W_{h}\otimes{V}\ar[u]_{\varphi_{h}} 
}
\end{equation}
has to commute, for any two objects $g$ and $h$ in $G$ and any 1-morphism $f\colon g\to h$ between them.
\end{remark}
\begin{remark}
Homotopy fixed points for a 2-character for a topological group are a special case of the following construction. Let $X$ be a $\infty$-groupoid, and let $L$ be a $\infty$-functor from $X$ to $B({\rm Pic(1\text{-}{\rm Vect})})$. A \emph{module} for $L$ is given by an $\infty$-functor $E:X\to{\rm Vect}$, and isomorphisms $L_{f}\otimes{E_{x}}\simeq{E_{y}}$ for any 1-morphism $f:x\to{y}$, where $L_{f}$ is the complex line assigned to $f$, and $E_{x}$ is the vector space assigned to $x$ by $E$. Higher morphisms must also be taken into account, and together with the above family of isomorphisms they must obey natural coherence conditions. The case of a homotopy fixed point for a 2-character for a topological group $G$ corresponds to $X=BG$. Another geometrically interesting case is when $X$ is the groupoid $Y^{[2]}\rightrightarrows{Y}$ for a surjective submersion $Y\to{M}$: in this case an $\infty$-functor $L\colon X\to B({\rm Pic(1\text{-}{\rm Vect})})$ is given by a bundle gerbe with a flat connection over $X$, while a module $E$ over $L$ is given by a (flat) gerbe module over $L$.  
\end{remark}

If $G$ is a (discrete) group and $\rho$ is a 1-character, i.e., a group homomorphism $G\to \mathbb{K}^*$, a homotopy fixed point is then nothing but a fixed point for the natural linear action of $G$ on $\mathbb{K}$ via $\rho$. Notice how the existence of a nonzero fixed point imposes a very strong constraint on the character $\rho$ in this case: if there exists a nonzero fixed point, then $\rho$ is the trivial character.
\par
An analogous phenomenon happens for $(n+1)$-characters of $\infty$-groups, for any $n\geq 0$. Here we will investigate in detail the case of 2-characters, due to its relevance to anomalous TQFTs. To do this, it is convenient to introduce the following terminology: we say that a 2-character $\rho\colon BG\to B{\rm Pic}(1\text{-}\mathrm{Vect})$ has \emph{trivial holonomy} if it factors through the natural projection $BG\to B\pi_0(G)$. The origin of this terminology is clear from Example \ref{example.lie}. There, the 2-character $\rho$ factors through $B\pi_{\leq \infty}(G_{\mathrm{Lie}})\to B\pi_0(G_{\mathrm{Lie}})$ precisely when the connection $\nabla$ has trivial holonomy.
%%%
%%%
We have then the following %%
\begin{lemma}\label{holzero}
Let $V$ be a non-zero homotopy fixed point for a 2-character $\rho$. Then $\rho$ has trivial holonomy.
\end{lemma}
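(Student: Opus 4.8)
The plan is to unwind what it means for a non-zero vector space $V$ to be a homotopy fixed point for the $2$-character $\rho\colon BG\to B\mathrm{Pic}(1\text{-}\mathrm{Vect})$, and then to exploit the commutative diagram \eqref{forcesholonomyzero} from Remark \ref{remark-homotopy-fixed}. Recall that the data of $\rho$ consist of complex lines $W_g$, gluing isomorphisms $\psi_{g,h}\colon W_g\otimes W_h\xrightarrow{\sim}W_{gh}$, and, for every $1$-morphism $f\colon g\to h$ in $G$, an isomorphism $\psi_f\colon W_g\xrightarrow{\sim} W_h$; the data of the homotopy fixed point are isomorphisms $\varphi_g\colon W_g\otimes V\xrightarrow{\sim} V$ satisfying \eqref{hofixed} together with the compatibility \eqref{forcesholonomyzero}. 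Saying that $\rho$ has trivial holonomy means that $\rho$ factors through $BG\to B\pi_0(G)$, which concretely amounts to the statement that $\psi_f$ depends only on the source and target of $f$ (equivalently, $\psi_f=\mathrm{id}$ whenever $f$ is a loop $g\to g$, once one has normalised appropriately).

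First I would fix an object $g\in G$ and an arbitrary $1$-morphism (path) $f\colon g\to h$. From diagram \eqref{forcesholonomyzero} we read off the identity $\varphi_h\circ(\psi_f\otimes\mathrm{id}_V)=\varphi_g$ as isomorphisms $W_g\otimes V\to V$. Since $\varphi_g$ and $\varphi_h$ are both isomorphisms and $V\neq 0$, this equation determines $\psi_f\otimes\mathrm{id}_V = \varphi_h^{-1}\circ\varphi_g$ uniquely, i.e. $\psi_f\otimes\mathrm{id}_V$ is completely fixed by the data $\varphi_g,\varphi_h$, which depend only on the endpoints $g$ and $h$. Because $W_g$, $W_h$ and $V$ are finite-dimensional and $V\neq 0$, the functor $(-)\otimes\mathrm{id}_V$ is faithful on morphisms of lines (tensoring a nonzero scalar by $\mathrm{id}_V$ is injective), so $\psi_f$ itself is determined by $g$ and $h$ alone. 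Hence two parallel $1$-morphisms $f,f'\colon g\to h$ induce the same isomorphism $\psi_f=\psi_{f'}$, which is exactly the statement that $\rho$ factors through $B\pi_0(G)$, i.e. $\rho$ has trivial holonomy.

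The one point that deserves care — and which I expect to be the only real obstacle — is making precise the passage from ``$\psi_f$ depends only on endpoints'' to ``$\rho$ factors through $BG\to B\pi_0(G)$'' at the level of $\infty$-functors rather than just on the underlying groupoid: one must check that the higher coherence data of $\rho$ (the $2$-morphisms and beyond witnessing that $\rho$ is a genuine $\infty$-functor, not just an assignment) are likewise forced to be trivial, so that the factorisation exists as a factorisation of $\infty$-functors. This follows by the same mechanism applied one categorical level up: a $2$-morphism between paths $f,f'\colon g\to h$ is sent by $\rho$ to a $2$-morphism between $\psi_f$ and $\psi_{f'}$, but in $B\mathrm{Pic}(1\text{-}\mathrm{Vect})$ — whose $2$-morphisms and higher are all identities, since $\mathrm{Pic}(1\text{-}\mathrm{Vect})$ is a $1$-groupoid — there is nothing to check, and $B\mathrm{Pic}(1\text{-}\mathrm{Vect})$ being $2$-discrete means $\rho$ factors through $\pi_{\leq 1}BG=B\pi_0(G)$ automatically once the underlying map of groupoids does. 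Alternatively, invoking Lemma \ref{hcat} directly: since $B\mathrm{Pic}(1\text{-}\mathrm{Vect})$ is $2$-discrete, any $\infty$-functor out of $BG$ factors through $\pi_{\leq 2}BG$, and the argument above shows it further factors through the $1$-truncation; packaging this cleanly is the technical heart of the proof, while the algebraic content is the short computation with diagram \eqref{forcesholonomyzero}.
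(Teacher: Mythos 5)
Your proposal is correct and follows essentially the same route as the paper: both read off $\psi_f\otimes\mathrm{id}_V=\varphi_h^{-1}\circ\varphi_g$ from diagram (\ref{forcesholonomyzero}), conclude that $\psi_f\otimes\mathrm{id}_V$ is independent of $f$, and use $V\neq 0$ to cancel the tensor factor and deduce that $\psi_f$ itself depends only on the endpoints, whence $\rho$ factors through $B\pi_0(G)$. Your extra paragraph on the higher coherence data (using that $B\mathrm{Pic}(1\text{-}\mathrm{Vect})$ is $2$-discrete) is a reasonable elaboration of a point the paper leaves implicit, but it does not change the argument.
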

\begin{proof}
Since $V$ is a homotopy fixed point for $\rho$, by Remark \ref{remark-homotopy-fixed} we have the commutative diagram (\ref{forcesholonomyzero})
for any 2-morphism $f\colon g\to{h}$ in $BG$ (i.e., for any 1-morphism $f\colon g\to{h}$ in $G$). Since $\varphi_{g}$ and $\varphi_{h}$ are isomorphisms, we have
\begin{equation}
\psi_f\otimes \mathrm{id}=\varphi_h^{-1}\circ \varphi_g,
\end{equation}
and so $\psi_f\otimes \mathrm{id}$ is independent of $f$. Since $V$ is nonzero, this implies that $\psi_f$ is actually independent of $f$. This means that all the complex lines $W_g$ with $g$ ranging over a connected component (i.e., an isomorphism class of objects) of $G$ are canonically isomorphic to each other, and so $\rho$ factors through $B\pi_0(G)$. 
\end{proof}
%%
%%%
Summing up the results in this section, we have the following
\begin{proposition}\label{ncarmain}
Let $\rho$ be a 2-character on an $\infty$-group $G$, and let $V$ be a nontrivial homotopy fixed point for $\rho$. Then there exist a 2-cocycle $\alpha_\rho$ on $\pi_0(G)$, unique up to equivalence, such that $V$ is isomorphic to (the homotopy fixed point realisation of) a projective representation of $\pi_0(G)$ with 2-cocycle $\alpha_\rho$.
\end{proposition}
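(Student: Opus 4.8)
The plan is to combine Lemma \ref{holzero} with the discrete-group story of Section \ref{finite}. First I would invoke Lemma \ref{holzero}: since $V$ is a nontrivial homotopy fixed point for $\rho$, the $2$-character $\rho$ has trivial holonomy, hence factors as $BG\to B\pi_0(G)\xrightarrow{\bar\rho} B{\rm Pic}(1\text{-}{\rm Vect})$ for some $2$-character $\bar\rho$ of the discrete group $\pi_0(G)$. Next I would feed $\bar\rho$ into the equivalence \eqref{eq.equivfinite}: reading $T$ backwards, the $2$-character $\bar\rho$ is equivalent to a group $2$-cocycle $\alpha_\rho\in \underline{Z}^2_{\mathrm{grp}}(\pi_0(G);\mathbb{K}^*)$, i.e.\ $\bar\rho\simeq T(\alpha_\rho)$; this $\alpha_\rho$ is well-defined up to equivalence because $T$ is an equivalence of groupoids and, as already remarked, $\mathrm{Vect}^{\rho}$ depends only on the isomorphism class of $\rho$.

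The remaining point is to transport the homotopy fixed point $V$ along this chain of identifications. The homotopy fixed point data for $\rho$ consists of $V$ together with isomorphisms $\varphi_g\colon W_g\otimes V\xrightarrow{\simeq} V$ satisfying \eqref{hofixed}, plus the extra commutativity \eqref{forcesholonomyzero} for $1$-morphisms of $G$. The content of trivial holonomy is precisely that the $\varphi_g$ descend to the quotient $\pi_0(G)$: if $f\colon g\to h$ is a $1$-morphism then \eqref{forcesholonomyzero} forces $\varphi_g$ and $\varphi_h$ to agree under the canonical identification $W_g\cong W_h$, so the assignment $g\mapsto \varphi_g$ factors through isomorphism classes. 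Hence $V$ is a homotopy fixed point for $\bar\rho=T(\alpha_\rho)$ in the sense of Definition \ref{fixdef}, i.e.\ $V\in \mathrm{Vect}^{T(\alpha_\rho)}$. Finally I would apply Lemma \ref{lemma.hrealization}: the functor $H^{\alpha_\rho}\colon {\rm Rep}^{\alpha_\rho}(\pi_0(G))\to \mathrm{Vect}^{T(\alpha_\rho)}$ is an equivalence, so $V\simeq H^{\alpha_\rho}(V,\varphi^{\alpha_\rho})$ for some projective representation $(V,\varphi^{\alpha_\rho})$ of $\pi_0(G)$ with $2$-cocycle $\alpha_\rho$, which is the assertion.

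I would also record why $\alpha_\rho$ is unique up to equivalence: any two choices arise from $2$-characters $\bar\rho$, $\bar\rho'$ of $\pi_0(G)$ that both pull back to something isomorphic to $\rho$ along the faithful (on isomorphism classes) functor $BG\to B\pi_0(G)$; since $G\to\pi_0(G)$ is surjective, an isomorphism of the pullbacks descends to an isomorphism $\bar\rho\simeq\bar\rho'$, and then $T^{-1}$ carries this to an equivalence $\alpha_\rho\sim\alpha_\rho'$ of $2$-cocycles.

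The main obstacle I anticipate is not conceptual but bookkeeping: making precise the claim that the homotopy fixed point structure on $V$ really does descend along $BG\to B\pi_0(G)$, i.e.\ that the coherence data of the $\infty$-functor $W$ and of the morphism $\underline 1\to W$ collapse correctly when restricted to trivial holonomy. Concretely one must check that the higher coherences (isotopies, isotopies of isotopies, \dots) impose no further obstruction once \eqref{forcesholonomyzero} holds — this is where the hypothesis that $V$ is \emph{nonzero} is used crucially, exactly as in the proof of Lemma \ref{holzero}, since it lets one cancel the $\varphi$'s and conclude the relevant lines and isomorphisms are canonically trivialized along connected components. Granting the earlier lemmas, this descent is the only step that requires genuine care.
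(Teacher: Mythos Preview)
Your proposal is correct and follows essentially the same route as the paper: invoke Lemma~\ref{holzero} to obtain trivial holonomy and hence a factorization through $B\pi_0(G)$, then apply the equivalence~\eqref{eq.equivfinite} and Lemma~\ref{lemma.hrealization}. The paper's proof is considerably terser and does not spell out the descent of the fixed-point data or the uniqueness argument, but your added detail is consistent with what the paper leaves implicit.
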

\begin{proof}
Since $\rho$ has a nontrivial homotopy fixed point, $\rho$ has trivial holonomy by Lemma \ref{holzero}. Therefore, by definition of trivial holonomy, $\rho$ is (equivalent to) a 2-character on the discrete group $\pi_0(G)$. The statement then follows from Equation (\ref{eq.equivfinite}) and Lemma \ref{lemma.hrealization}.
\end{proof}
%%
%%%
\subsection{Projective representations from TQFTs} We can finally apply the results on $(k+1)$-characters to anomalous TQFTs. 
Indeed, consider a semitrivialized anomaly $W\colon  \mathrm{Cob}_k^\infty(n)\to B{\rm Pic}(k\text{-}{\rm Vect})\hookrightarrow {(k+1)\mbox{-}{\rm Vect}}$, and let $Z_{W}$ be
an $n$-dimensional anomalous TQFT extended down to codimension $k$, with anomaly $W$. Reasoning as in Remark \ref{charactanom}, the anomaly $W$ induces, for any closed (oriented) $(n-k)$-dimensional manifold $\Sigma$, a 2-character $\rho_\Sigma$ for the $\infty$-group of (oriented) diffeomorphisms $\Gamma^\infty(\Sigma)$, as in the
%From the anomalous TQFT $Z_{W}$, for any $\Sigma$ we obtain the 
following diagram

\begin{equation}\label{diagram}
\xymatrixcolsep{2pc}
\xymatrix{
B\Gamma^\infty(\Sigma)\ar@/_3pc/[rrr]_{\rho_\Sigma}
\ar@{^{(}->}[r] & \mathrm{Cob}_0^\infty(n-k)\ar@{^{(}->}[r] & \mathrm{Cob}_k^\infty(n)\ar[r]  \ar@/^2pc/[rr]^{W}
& B{\rm Pic}(k\text{-}{\rm Vect})\ar[r]& {(k+1)\mbox{-}{\rm Vect}}
}
\end{equation}
The $k$-vector space $Z_W(\Sigma)$ associated by the anomalous TQFT $Z_W$ to the (oriented) $(n-k)$-dimensional manifold $\Sigma$ is, by definition, a homotopy fixed point for $\rho_\Sigma$. In particular, for $k=1$, by Proposition \ref{ncarmain}, the vector space $Z_W(\Sigma)$ associated to an $(n-1)$-dimensional manifold $\Sigma$ is a projective representation of the mapping class group $\Gamma(\Sigma)$ as soon as $Z_W(\Sigma)$ is nonzero. In other words, for any $(n-1)$-dimensional manifold $\Sigma$ we obtain a central extension
\[
1\to \mathbb{K}^*\to \widetilde{\Gamma}(\Sigma)\to \Gamma(\Sigma) \to 1
\]
and a linear representation $\widetilde{\Gamma}(\Sigma)\to \mathrm{Aut}(Z_W(\Sigma))$. This can be neatly described by noticing that for $k=1$ the data for an anomalous TQFT with anomaly $W$ are a homotopy commutative diagram of the form
\[
\xymatrix{
 \mathrm{Cob}_1^\infty(n)\ar[r]\ar[d]_{W} &\mathbf{1}\ar[d] \\
B{\rm Pic}({\rm Vect}) \ar[r]& 2\mbox{-}{\rm Vect}
 \ar@{=>}(12.25,-5.25);(8.75,-8.75)^{Z_W}
}.
\]
Such a diagram can be interpreted as the datum of a section $Z_W$ of the 2-line bundle $\mathcal{L}$ over $\mathrm{Cob}_1^\infty(n)$ associated with $W$. The ``graph'' of this section is a $\infty$-category 
$\widetilde{\mathrm{Cob}}_1^\infty(n)$ over $\mathrm{Cob}_1^\infty(n)$ whose objects are pairs consisting of an $(n-1)$-dimen\-sional manifold $\Sigma$ together with the choice of an object in the fibre $\mathcal{L}_\Sigma$. The mapping class group for such a pair is the $\mathbb{K}^*$-central extension of $\Gamma(\Sigma)$ described above. Notice the striking similarity with Segal's description of projective modular functors via central extensions of the cobordism category \cite{segal}, with the remarkable difference that anomalies in the sense of the present article induce $\mathbb{K}^*$-central extensions whereas in Segal's extended cobordism one deals with $\mathbb{Z}$-central extensions. 
\begin{remark}
As we have seen above, having a semitrivialized anomaly $W$ produces projective representations of the mapping class groups of all closed $(n-k)$-dimensional manifolds at once. If one is interested in a single  $(n-k)$-dimensional manifold $\Sigma$, though, there is no need for a semitrivialization of the anomaly: indeed, one can produce a projective representation of $\Gamma(\Sigma)$ from any anomalous TQFT $Z_W$, as soon as the invertible $(k+1)$-vector space $W(\Sigma)$ is equivalent to the ``trivial'' $(k+1)$-vector space $k\text{-}{\rm Vect}$. As already observed in Remark \ref{semitrivializable}, this is always possible, although non canonically, for any invertible $(k+1)$-vector space, with $k=0,1$. Namely, choosing an equivalence between $W(\Sigma)$ and $k\text{-}{\rm Vect}$ amounts to give a homotopy commutative diagram
\[
\xymatrix{
B\mathrm{Aut}(W(\Sigma)) \ar[r]^{W(\Sigma)}\ar[d]& (k+1)\mbox{-}{\rm Vect}\\
B{\rm Pic}(k\mbox{-}{\rm Vect})\ar[ru]
 \ar@{=>}(8.25,-3.25);(12.75,-6.75)^{\Psi}
},
\]
where the top horizontal arrow  picks the $(k+1)$-vector space $W(\Sigma)$, while the diagonal arrows is the canonical embedding of $B{\rm Pic}(k\mbox{-}{\rm Vect})$ into $(k+1)\mbox{-}{\rm Vect}$, which picks the $(k+1)$-vector space $k\text{-}{\rm Vect}$.
The construction of the projective representation of the mapping class group of $\Sigma$ follows from the very same arguments as above: indeed, just notice that in diagram (\ref{diagram}) it is inessential to have the arrow $\mathrm{Cob}_k^\infty(n)\to B{\rm Pic}(k\text{-}{\rm Vect})$ if we are interested in a single manifold $\Sigma$, while at the same time the morphism $B\Gamma^\infty(\Sigma)\to (k+1)\mbox{-}{\rm Vect}$ naturally factors through $B\mathrm{Aut}(W(\Sigma))$. We therefore obtain the following variant of diagram (\ref{diagram}), which induces the same considerations as above:
\begin{equation}
\xymatrixcolsep{2pc}
\xymatrix{
B\Gamma^\infty(\Sigma)\ar@/_1pc/[rrd]_{\rho_\Sigma}
\ar@{^{(}->}[r] & \mathrm{Cob}_0^\infty(n-k)\ar@{^{(}->}[r] & \mathrm{Cob}_k^\infty(n)  \ar@/^3pc/[rr]^{\underline{1}}\ar[rr]^{W}
& & {(k+1)\mbox{-}{\rm Vect}}\\
& &B\mathrm{Aut}(W(\Sigma))\ar[r] \ar[rru]^{W(\Sigma)}&B{\rm Pic}(k\text{-}{\rm Vect})\ar@/_1pc/[ur]
\ar@{=>}(100.25,-7.25);(104.75,-9.75)^{\Psi}
\ar@{=>}(90,9);(90,5)^{Z_W}
\ar@{=>}(40,-4.25);(50,-8.75)^{\sim}
}
\end{equation}
\end{remark}

%%Section 7
\section{Boundary conditions for TQFTs}\label{boundary}
\subsection{Boundary conditions} The $n$-dimensional TQFTs defined in Section \ref{tqft} assign diffeomorphism invariants to \emph{closed} $n$-manifolds. Neverthless, $n$-manifolds with boundaries have also invariants, usually obtained via relative constructions. One possibility to incorporate invariants of manifolds with boundaries is to enlarge the cobordism category with morphisms represented by manifolds with \emph{constrained} boundaries. The guiding example is given by 2-dimensional open/closed topological field theory \cite{laudapfeiffer,lazaroiu,mooresegal}, 
where the authors enlarge the category $\mathrm{Cob}_1(2)=\pi_{\leq 1}\mathrm{Cob}_1^\infty(2)$ of 2-dimensional cobordism by adding to it 1- and 2-dimensional manifolds with part of the boundary declared to be constrained, meaning that it is not possible to glue along. If we denote by ${\rm Cob}_1^{\partial}(2)$ this enlarged category, we will have the following 1-manifolds (and disjoint union of) as objects
%%
%%\begin{figure}[h!]
\begin{center}
\includegraphics[scale=0.7]{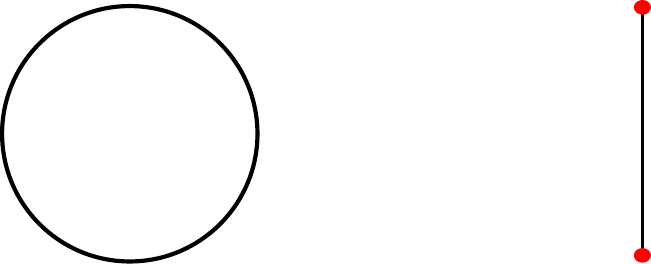}
\end{center}
%%\end{figure}
%%
%%\vskip 1.5 cm
and the following 2-manifolds as some of the morphisms%% \vskip 2 cm
%%
%%\begin{figure}[h!]
\begin{center}
\includegraphics[scale=0.6]{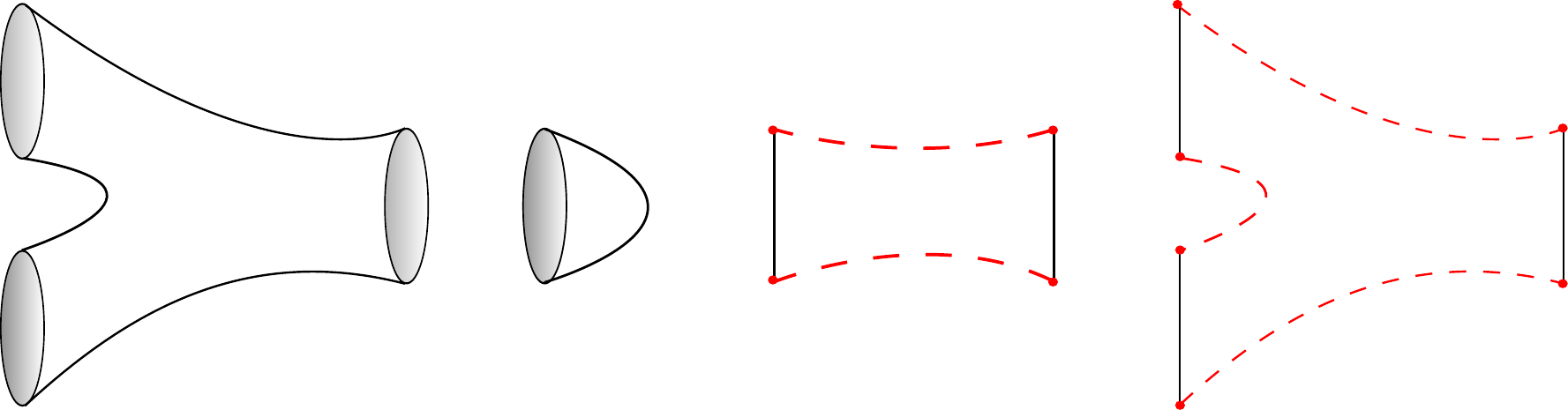}
\end{center}
%%\end{figure}
%%
where we denote the constrained boundary with a  dashed red line. Notice that, differently from \cite{mooresegal}, we are here using only \emph{one} type of constrained boundary, which we label/color red. The general case will be discussed in Remark \ref{moree-segal-setup} below.\\ 
\par
Inspired by the description of ${\rm Cob}^{\partial}(2)$ sketched above, let us define 
iteratively a \emph{constrained bordism} between two constrained $d$-dimensional manifolds $\Sigma_{0}$ and $\Sigma_{1}$ as a $(d+1)$-dimensional manifold \footnote{Here manifold more precisely means ``manifold with corners''.} $M$ whose boundary $\partial{M}$ can be decomposed as $\Sigma_{0}\cup{\Sigma_{1}}\cup{\partial_{const}{M}}$, where $\partial_{const}M$ is a cobordism from $\partial_{const}{\Sigma_{0}}$ to $\partial_{const}{\Sigma_{1}}$. Constrained cobordisms come with smooth collars around the part of the boundary which is unconstrained, in order to be able to glue them. With this premise, we can give the following informal definition, a rigorous version of which can be found in  \cite[Section 4.3]{lurie}.
\begin{definition}
The symmetric monoidal $(\infty,n)$-category $\mathrm{Bord}^\partial(n)$ has points as objects, 1-dimensional constrained bordisms as 1-morphisms, 2-dimensional constrained bordisms between constrained bordisms as 2-morphisms, and so on until we arrive at $n$-dimensional constrained bordisms as $n$-morphisms, from where higher morphisms are given by diffeomorphisms fixing the unconstrained boundaries and isotopies between these (and isotopies between isotopies, and so on).  
\end{definition}
\begin{remark}
Exactly as $\mathrm{Bord}(n)$, also $\mathrm{Bord}^\partial(n)$ comes in different flavours corresponding to the various possible $G$-framings of the cobordisms. In this section we will be interested in the general features of TQFTs with boundary conditions, and in their relation to anomalous field theories. Hence in what follows, we will always leave the $G$-marking unspecified, unless stated otherwise.
\end{remark}
\begin{example}\label{example-sguinci}
The following 1-dimensional constrained cobordisms are examples of 1-morphisms in $\mathrm{Bord}^{\partial,or}(n)$, for any $n\geq 1$.
\begin{figure}[h!]
%%\begin{center}
\includegraphics[scale=0.4]{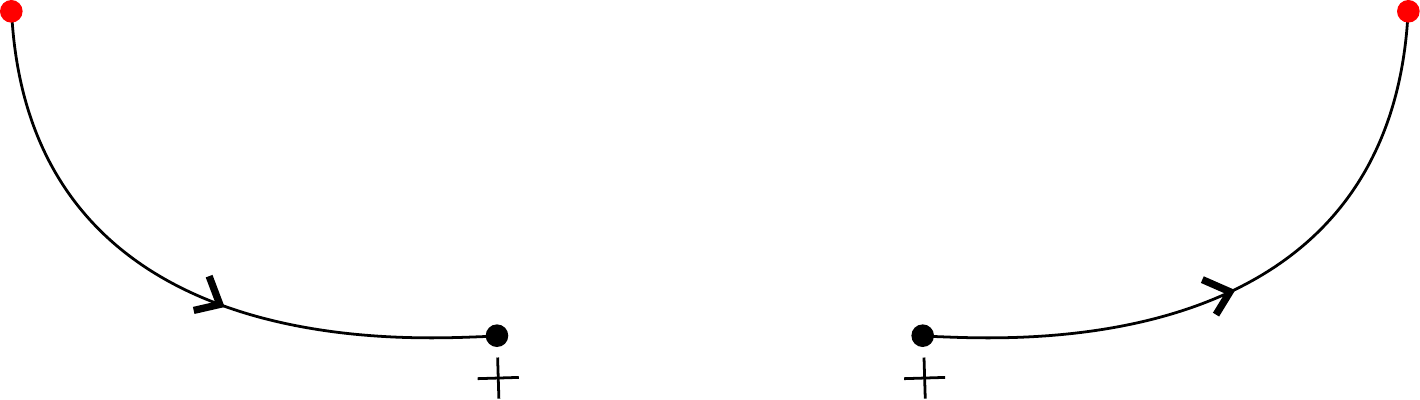}
\end{figure}\\
%%\end{center}
%% 
The one on the left represents a 1-morphism $\emptyset \to {\rm pt}^+$, which cannot be realized in  $\mathrm{Bord}(n)$. Similarly, the morphism on the right represents a 1-morphism ${\rm pt}^+\to \emptyset$, which is also not present in $\mathrm{Bord}(n)$. 
\end{example}
In analogy with the notation used in the unconstrained case, we will set
\begin{equation}
{\rm Cob}^{\partial,\infty}_k(n)=\Omega^{n-k}\mathrm{Bord}^\partial(n).
\end{equation}
With this notation, we have that the category of 2-dimensional constrained cobordism mentioned above is given by $\mathrm{Cob}^\partial_1(2)=\pi_{\leq 1}\mathrm{Cob}_1^{\partial,\infty}(2)$. 
There is a canonical (non full) embedding $\mathrm{Bord}(n)\hookrightarrow \mathrm{Bord}^\partial(n)$, hence for any $k\geq{0}$ we have a natural (non full) embeddings
\begin{equation}
i:{\rm Cob}^{\infty}_k(n)\hookrightarrow {\rm Cob}^{\partial,\infty}_k(n).
\end{equation}
This allows us to give the following
\begin{definition}
Let $Z\colon {\rm Cob}^{\infty}_k(n)\to (k+m)\text{-}\mathrm{Vect}$ be an $n$-dimensional TQFT wih moduli level $m$. A \emph{boundary condition} for $Z$ is a symmetric monoidal extension %%
\begin{equation}
\xymatrix{{\rm Cob}^{\partial,\infty}_k(n)\ar[r]^{\tilde{Z}}&(k+m)\text{-}\mathrm{Vect}\\
{\rm Cob}^\infty_k(n)\ar[ur]_{Z}\ar[u]^{i}
}
\end{equation}
\end{definition}
\begin{remark}
It is important to notice that boundary conditions for an invertible TQFT are \emph{not} required to be invertible. This is reminiscent of the definition of an anomalous TQFT, where the morphism $\underline{1}\to W$ is not required to be an isomorphism. We will come back to this in Section \ref{boundanom}.
\end{remark}
\begin{example}\label{sguinci} The definition above can be made completely explicit for an Atiyah-Segal 1-dimensional TQFT, i.e., for $Z\colon {\rm Cob}^{\infty}_1(1)\to \mathrm{Vect}$. Indeed, in the same way as $Z$ factors through $\mathrm{Cob}_1(1)$, $\tilde{Z}$ will factor through $\mathrm{Cob}_1^\partial(1)=\pi_{\leq 1}{\rm Cob}^{\partial,\infty}_1(1)$. The objects of ${\rm Cob}^{\partial}(1)$ are oriented points, and the morphisms are given by those in ${\rm Cob}(1)$, and in addition the following constrained morphisms
\begin{center}
\includegraphics[scale=0.4]{sguinci.pdf}
\end{center}
and their duals. Therefore, if the 1-dimensional TQFT $Z$ is given by the finite-dimensional vector space $V$, then a boundary condition $\tilde{Z}$ for $Z$ is the datum of a pair $(v,\varphi)$, where $v$ is a vector in $V$ and $\varphi$ is an element in the dual space $V^*$. We will call these a \emph{left} and a \emph{right} boundary condition, respectively.
 In the unoriented situation the two morphisms above are identified, and a boundary condition reduces to the datum of the vector $v$, which also plays the role of a linear functional on $V$ via the symmetric nondegenerate inner product on $V$.
\end{example}
What makes the description of the boundary conditions so simple in the example above is the fact that we are dealing with a fully extended theory. Indeed, one has the following extension of the cobordism hypothesis to cobordisms with constrained boundaries \cite{lurie}.
\begin{theorem}[Lurie-Hopkins]\label{luriehopkins2}
Let $Z:{\rm Bord}^{fr}(n)\to n\text{-}\mathrm{Vect}$ be a fully extended TQFT with moduli level 0.
Then there is an equivalence
\begin{equation}
\{\text{(Left) boundary conditions for $Z$}\} \cong \mathrm{Hom}_{n\text{-}\mathrm{Vect}}((n-1)\text{-}\mathrm{Vect},Z({\rm pt}^+))\cong{Z({\rm pt}^+)}
\end{equation}
induced by the evaluation of $\tilde{Z}$ on the decorated interval on the left in Example \ref{example-sguinci}. %%
\end{theorem}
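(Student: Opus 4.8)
The plan is to deduce this as a corollary of the cobordism hypothesis for manifolds with singularities (equivalently, with constrained boundaries), exactly as stated in the discussion preceding Theorem \ref{luriehopkins2} and attributed to \cite{lurie}. The key observation is that a boundary condition $\tilde Z$ is, by definition, a symmetric monoidal extension of $Z$ along the embedding $i\colon {\rm Bord}^{fr}(n)\hookrightarrow {\rm Bord}^{\partial,fr}(n)$. The category ${\rm Bord}^{\partial,fr}(n)$ should be understood as the cobordism $(\infty,n)$-category generated by the point together with one extra ``singularity datum'': the left endpoint morphism $D\colon \emptyset\to {\rm pt}^+$ pictured on the left in Example \ref{example-sguinci} (its dual, the right endpoint, is then automatically present). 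So the structure theory of these categories reduces to a single universal property: ${\rm Bord}^{\partial,fr}(n)$ classifies pairs consisting of a fully dualizable object $x$ (the image of ${\rm pt}^+$) together with a morphism from the monoidal unit to $x$ satisfying whatever dualizability/finiteness compatibility makes the decorated interval a legitimate $1$-morphism in a fully extended theory.

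First I would invoke the ordinary cobordism hypothesis (Theorem \ref{luriehopkins}) to identify the restriction $Z|_{{\rm Bord}^{fr}(n)}$ with its value $Z({\rm pt}^+)$, a fully dualizable object of $n\text{-}\mathrm{Vect}$; this pins down what $\tilde Z$ must do on the image of $i$. Next I would apply the singular version of the cobordism hypothesis to ${\rm Bord}^{\partial,fr}(n)$: a symmetric monoidal functor out of it, extending a given functor on ${\rm Bord}^{fr}(n)$, is determined by — and freely specified by — the image of the single extra generating $1$-morphism $D$. Since $D$ goes from $\emptyset$ (sent to the monoidal unit $(n-1)\text{-}\mathrm{Vect}$, as $\Omega(n\text{-}\mathrm{Vect})\simeq (n-1)\text{-}\mathrm{Vect}$) to ${\rm pt}^+$ (sent to $Z({\rm pt}^+)$), the datum of $\tilde Z(D)$ is precisely a $1$-morphism $(n-1)\text{-}\mathrm{Vect}\to Z({\rm pt}^+)$ in $n\text{-}\mathrm{Vect}$, i.e.\ an element of $\mathrm{Hom}_{n\text{-}\mathrm{Vect}}((n-1)\text{-}\mathrm{Vect},Z({\rm pt}^+))$. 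Finally, since $(n-1)\text{-}\mathrm{Vect}$ is the monoidal unit of $n\text{-}\mathrm{Vect}$, the Hom-space $\mathrm{Hom}_{n\text{-}\mathrm{Vect}}((n-1)\text{-}\mathrm{Vect},Z({\rm pt}^+))$ is canonically equivalent to $Z({\rm pt}^+)$ itself (this is the $n$-categorical analogue of $\mathrm{Hom}_{\mathrm{Vect}}(\mathbb{K},V)\cong V$), giving the second equivalence; one should check that this chain of equivalences is implemented by evaluation of $\tilde Z$ on the decorated interval, which is exactly $\tilde Z(D)$, so the claimed naturality of the statement holds on the nose.

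I would also note, for completeness, the compatibility that the constrained-boundary cobordism hypothesis must build in: the decorated interval $D\colon \emptyset \to {\rm pt}^+$ must survive as an $n$-morphism-worthy piece of data, which in Lurie's formulation corresponds to requiring that the candidate morphism $\tilde Z(D)$ exhibit suitable dualizability — in the fully extended $n\text{-}\mathrm{Vect}$ setting this is automatic because every object is fully dualizable and every $1$-morphism between fully dualizable objects is itself dualizable to the requisite degree, so no extra condition on $\tilde Z(D)$ is imposed. Hence the three sets/spaces in the statement are genuinely equivalent with no side constraints, and ``left'' versus ``right'' simply records whether one uses $D$ or its dual ${\rm pt}^+\to\emptyset$ (which would give $Z({\rm pt}^+)^\vee$, or equivalently $\mathrm{Hom}(Z({\rm pt}^+),(n-1)\text{-}\mathrm{Vect})$).

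The main obstacle is that the entire argument rests on the cobordism hypothesis \emph{for manifolds with singularities} — the precise statement that ${\rm Bord}^{\partial,fr}(n)$ is the free symmetric monoidal $(\infty,n)$-category on a point plus a morphism $\emptyset\to{\rm pt}^+$ with the appropriate universal property — which is sketched but not rigorously established in \cite{lurie}. Concretely, the delicate point is verifying that adjoining the single $1$-morphism $D$ really does generate all constrained bordisms in all dimensions up to $n$ (together with diffeomorphisms and isotopies), so that extensions of $Z$ along $i$ are in bijection with choices of $\tilde Z(D)$ and nothing more; this is the ``free generation'' claim that does the real work and that I would be explicitly assuming, consistently with the disclaimer in the introduction about which results we take as foundational input.
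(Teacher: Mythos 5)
The paper does not actually prove this theorem: it is quoted from \cite{lurie} as the constrained-boundary instance of the cobordism hypothesis for manifolds with singularities (one of the foundational assumptions listed in the introduction), and your reconstruction --- restriction along $i$ is controlled by $Z(\mathrm{pt}^+)$ via the ordinary cobordism hypothesis, the extension is then freely determined by the image of the single generating morphism $D\colon\emptyset\to\mathrm{pt}^+$, which lives in $\mathrm{Hom}_{n\text{-}\mathrm{Vect}}((n-1)\text{-}\mathrm{Vect},Z(\mathrm{pt}^+))\cong Z(\mathrm{pt}^+)$ because $(n-1)\text{-}\mathrm{Vect}$ is the monoidal unit --- is exactly the intended derivation, including your correct identification that the unproved ``free generation'' of $\mathrm{Bord}^{\partial,fr}(n)$ by $\mathrm{pt}^+$ and $D$ is where all the real work is hidden. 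One minor quibble: in Lurie's formulation the singularity datum $\tilde Z(D)$ is an \emph{arbitrary} morphism on which no dualizability condition is imposed at all (which is precisely why boundary conditions for invertible theories need not be invertible, as the paper remarks elsewhere), so your closing observation that the required dualizability of $\tilde Z(D)$ ``is automatic'' reaches the right conclusion but misattributes the mechanism --- there is no condition to verify in the first place.
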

This description of (left) boundary conditions is strongly reminescent of an anomalous TQFT as in Definition \ref{def-anomaly}. In the following we will see how a TQFT with (left) boundary conditions naturally induces an anomalous TQFT.
\begin{remark}
For TQFTs with values in an arbitrary symmetric monoidal $(\infty,n)$-category $\mathcal{C}$, one still has that the $(\infty, n-1)$-category of boundary conditions is equivalent to the hom-space $\mathrm{Hom}_{\mathcal{C}}(1_{\mathcal{C}},Z({\rm pt}^+))$, where $1_{\mathcal{C}}$ is the monoidal unit of $\mathcal{C}$. However in general this hom-space is not equivalent to $Z({\rm pt}^+)$.
\end{remark}
\begin{remark}\label{boundary-oriented}
An analogue statement is likely to hold for cobordisms with a reduction $G\to O(n)$ of the structure group of $n$-dimensional manifolds, by suitably taking into account the homotopy $O(n)$-action on the homotopy $G$-fixed point ${Z({\rm pt}^+)}$. For instance, in the oriented situation one has $O(n)/SO(n)=\mathbb{Z}/2\mathbb{Z}$, and the full boundary conditions data consist of a left boundary condition $(n-1)\text{-}\mathrm{Vect}\to Z({\rm pt}^+)$ and a right boundary condition $Z({\rm pt}^+)\to
(n-1)\text{-}\mathrm{Vect}$. Yet, for $n\geq 2$, every $n$-vector space $V$ realized as a linear $(n-1)$-category comes naturally equipped with a distinguished inner product given by the Hom bifunctor
\begin{equation}
\mathrm{Hom}:V^{op}\boxtimes V\to (n-1)\text{-}\mathrm{Vect}
\end{equation}
With this choice of inner product, left boundary conditions automatically determine right boundary conditions as in the unoriented case. 
\end{remark}
\begin{remark}\label{moree-segal-setup}One can consider more than a single boundary condition at once, by replacing ${\rm Bord}^{\partial}(n)$ by the larger symmetric monoidal $(\infty,n)$-category  ${\rm Bord}^{\partial_{J}}(n)$, where constrained boundaries are labelled by indices from a set $J$ of \emph{colours}. An extension $\tilde{Z}$ of a TQFT $Z$ to ${\rm Cob}_k^{\partial_J,\infty}(n)$ is then the assignment of a boundary condition to each colour $j\in J$, in such a way that the constraints imposed by requiring $\tilde{Z}$ to be a monoidal symmetric functor are satisfied. One can in particular make the tautological choice $
J=\text{objects}(\mathcal{B}_Z)$, where $\mathcal{B}_Z$ denotes the category of boundary conditions for $Z$.
In this way we recover the open/closed field theory framework as in \cite{laudapfeiffer,lazaroiu,mooresegal}. Namely, we recall from Example \ref{2-tier} that an extended 2-dimensional \emph{oriented} TQFT $Z$ is the datum of a semisimple Frobenius algebra $A$, to be seen as a placeholder for its category of finite dimensional right modules. 
Using the Hom functor as an inner product on ${}_A\mathrm{Mod}$ reduces boundary conditions to left boundary conditions (see Remark \ref{boundary-oriented}). Therefore one has constrained boundaries decorated by right $A$-modules, and the boundary condition $\tilde{Z}$ associates with the oriented segment with constrained boundaries
\begin{center}
%%\begin{figure}[h!]
\includegraphics[scale=0.4]{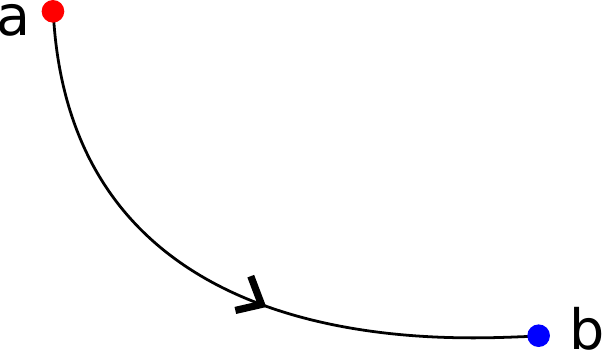}
%%\end{figure}\\
\end{center}
decorated by the $A$-modules $R_a$ and $R_b$ the vector space $\mathcal{O}_{ab}=\mathrm{Hom}_A(R_a,R_b)$. See \cite{alexeevski-natanzon} for a treatment of open/closes 2d \emph{nonoriented} TQFTs.
\end{remark}

\begin{remark}
As an intermediate symmetric monoidal $(\infty,k)$-category between $\mathrm{Cob}_k^\infty(n)$ and $\mathrm{Cob}_k^{\partial,\infty}(n)$, one can consider the \emph{closed sector} $\mathrm{Cob}_{k,\mathrm{cl}}^{\partial,\infty}(n)$, defined as the \emph{full}  $(\infty,k)$-subcategory generated by $\mathrm{Cob}_k^\infty(n)$ inside $\mathrm{Cob}_k^{\partial,\infty}(n)$. Namely, objects in $\mathrm{Cob}_{k,\mathrm{cl}}^{\partial,\infty}(n)$ are closed $k$-manifolds, as in $\mathrm{Cob}_k^\infty(n)$.  Notice that in $\mathrm{Cob}_k^{\partial,\infty}(n)$ we allow for more objects, since one can consider $k$-manifolds with completely constrained boundary. For instance, of the two objects in $\mathrm{Cob}_1^{\partial,\infty}(2)$ depicted at the beginning of this section, only $S^1$ is an object in the closed sector.
\par
One can therefore also consider closed sector boundary conditions, i.e., extensions of a TQFT to the closed sector 
\begin{equation}
\xymatrix{{\rm Cob}^{\partial,\infty}_{k,\mathrm{cl}}(n)\ar[r]^{\tilde{Z}_{\mathrm{cl}}}&(k+m)\text{-}\mathrm{Vect}\\
{\rm Cob}^\infty_k(n)\ar[ur]_{Z}\ar[u]^{i}
}
\end{equation}
These are expected to be particularly simple in the $k=n-1$ case. Indeed, since $S^1$ is the only closed 1-dimensional manifold up to cobordisms, closed sector boundary conditions for a TQFT $Z\colon  {\rm Cob}^{\infty}_{n-1}(n)\to (n-1)\text{-}\mathrm{Vect}$ should reduce to a $(n-1)$-linear morphism $(n-2)\text{-}\mathrm{Vect}\to Z(S^1)$, i.e., to an object in $Z(S^1)$. This is in agreement with the findings in the literature on extended 3-dimensional TQFTs, where boundary decorations for a 2-dimensional surface $\Sigma$ with boundary components are objects in the modular tensor category the TQFT associates to $S^1$ \cite{bakalovkirillov}.
\end{remark}
%%
%%Section 8
\section{From boundary conditions to anomalous TQFTs}\label{boundanom}

As mentioned in the previous section, there is a close relation between boundary conditions for invertible TQFTs and anomalous TQFTs. In the present section we will exploit this relation in detail.\\
Let $\tilde{Z}$ be a boundary condition for an $(n+1)$-dimensional invertible TQFT $Z$ extended up to codimension $k+1$ with moduli level 0. In other words, we have the following commutative diagram
\begin{equation}
\xymatrix{{\rm Cob}^{\partial,\infty}_{k+1}(n+1)\ar[r]^{\tilde{Z}}&(k+1)\text{-}\mathrm{Vect}\\
{\rm Cob}^\infty_{k+1}(n+1)\ar[r]^-{Z}\ar[u]^{i}& \mathrm{Pic}((k+1)\text{-}\mathrm{Vect})\ar[u]
}
\end{equation}
As mentioned in Remark \ref{restriction}, the restriction of $Z$ to $\mathrm{Cob}_{k}(n)\hookrightarrow \mathrm{Cob}_{k+1}(n+1)$ is an $n$-dimensional anomaly, which we will denote $W^{Z}$.\\
Let $[\underline{0},1]$ denote the oriented interval $[0,1]$ with $\{0\}$ being a constrained component of the boundary, as in the figure in Example \ref{example-sguinci}, on the left. Then for any $m$-morphism $\Sigma$ in $\mathrm{Cob}_{k}(n)$, with $k\geq{0}$, i.e. for any $(n-k+m)$-dimensional manifold $\Sigma$, possibly with boundary, the product manifold $\Sigma\times [\underline{0},1]$ can be seen as a $(m+1)$-morphism from $\emptyset$ to $\Sigma$ in ${\rm Cob}^{\partial,\infty}_{k+1}(n+1)$:
\begin{equation}
\emptyset\xrightarrow{\Sigma\times [\underline{0},1]}\Sigma,
\end{equation}
We can graphically depict the morphism above as follows%%
\begin{center}
\def\svgwidth{150pt}
\begingroup%
  \makeatletter%
  \providecommand\color[2][]{%
    \errmessage{(Inkscape) Color is used for the text in Inkscape, but the package 'color.sty' is not loaded}%
    \renewcommand\color[2][]{}%
  }%
  \providecommand\transparent[1]{%
    \errmessage{(Inkscape) Transparency is used (non-zero) for the text in Inkscape, but the package 'transparent.sty' is not loaded}%
    \renewcommand\transparent[1]{}%
  }%
  \providecommand\rotatebox[2]{#2}%
  \ifx\svgwidth\undefined%
    \setlength{\unitlength}{391.88502692bp}%
    \ifx\svgscale\undefined%
      \relax%
    \else%
      \setlength{\unitlength}{\unitlength * \real{\svgscale}}%
    \fi%
  \else%
    \setlength{\unitlength}{\svgwidth}%
  \fi%
  \global\let\svgwidth\undefined%
  \global\let\svgscale\undefined%
  \makeatother%
  \begin{picture}(1,0.50058139)%
    \put(0,0){\includegraphics[width=\unitlength]{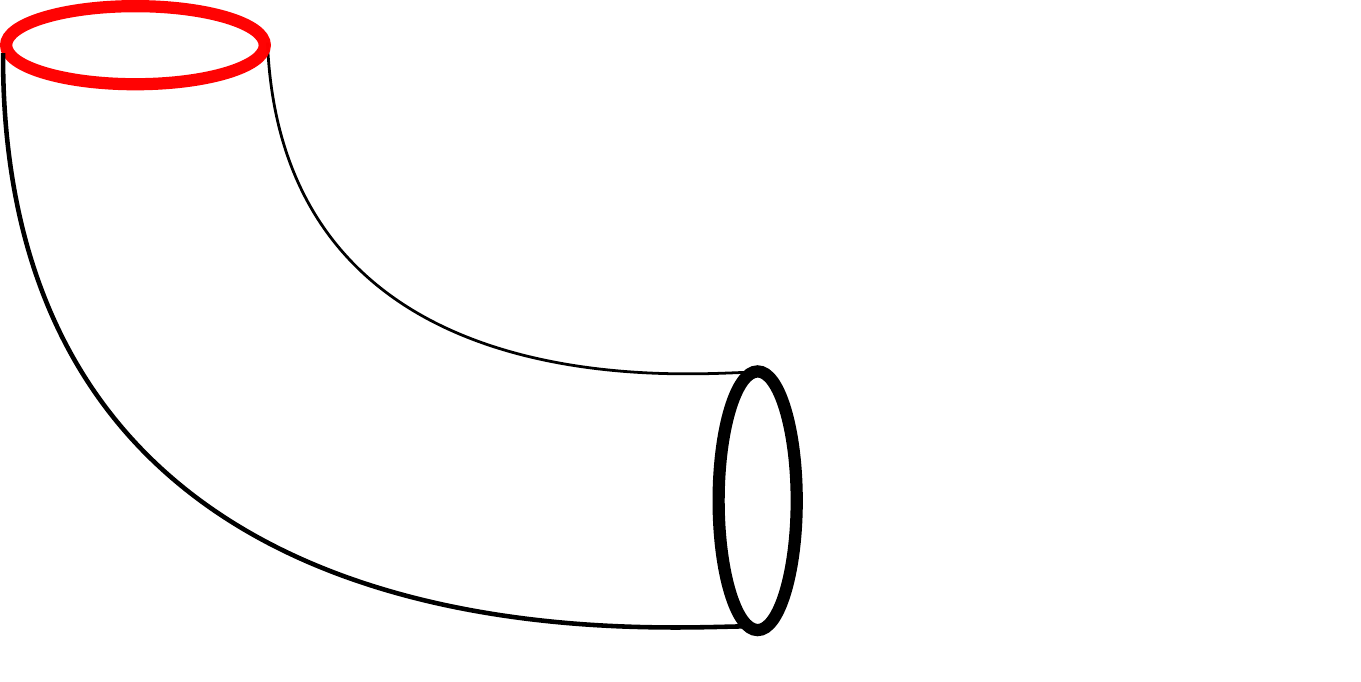}}%
    \put(0.22867604,0.44943832){\color[rgb]{0,0,0}\makebox(0,0)[lb]{\smash{$\Sigma\times{\underline{0}}$}}}%
    \put(0.60429641,0.00849261){\color[rgb]{0,0,0}\makebox(0,0)[lb]{\smash{$\Sigma\times{1}$}}}%
  \end{picture}%
\endgroup%
\end{center}
Moreover, given an $(n-k+m+1)$-cobordism $M$ between $\Sigma$ and $\Sigma'$, we have that the coloured manifold $M\times[\underline{0},1]$ induces a cobordism between $\Sigma\times[\underline{0},1]$ and $\Sigma'\times[\underline{0},1]$.\\
Evaluating $\tilde{Z}$ on $\Sigma\times[\underline{0},1]$ gives us a $(m+1)$-morphism in $(k+1)\text{-}\mathrm{Vect}$
between the unit (in the correct degree) and $\tilde{Z}(\Sigma)=Z(\Sigma)=W^{Z}(\Sigma)$.\\
Recall that a $(k+1)$-morphism in $\mathrm{Cob}_{k}(n)$ is a diffeomorphism $\varphi\colon\Sigma_1\to\Sigma_2$ of $n$-dimensional manifolds fixing the boundaries. By combining it with the identity of $ [\underline{0},1]$, one gets a diffeomorphism of $(n+1)$-dimensional manifolds, which realizes a $(k+2)$-morphism in  ${\rm Cob}^{\partial,\infty}_{k+1}(n+1)$ between the empty set and the mapping cylinder of $\varphi$. Applying $\tilde{Z}$ we get a morphism from the unit to $\tilde{Z}(M_\varphi)=Z(M_\varphi)=W^{Z}(\varphi)$. This pattern continues with no changes to isotopies between diffeomorphisms, isotopies between isotopies, etc. Hence we have that
\begin{equation}
 \tilde{Z}_{W^{Z}}:=\tilde{Z}(-\times [\underline{0},1])
 \end{equation}
 defines a morphism $\tilde{Z}_{W^{Z}}:\underline{1}\to W^{Z}$, i.e. an anomalous TQFT in the sense of Definition \ref{def-anomaly}.\\
We can assemble the argument above in the following
\begin{proposition}\label{boundaryanomalous}
Let $Z$ be a $(n+1)$-dimensional invertible TQFT extended down to codimension $k+1$ with moduli level 0, and let $W^{Z}$ denote the $n+1$-dimensional anomaly induced by $Z$. Then any boundary condition $\tilde{Z}$ for $Z$ induces an $n$-dimensional anomalous TQFT $\tilde{Z}_{W^{Z}}$  with anomaly $W^{Z}$. 
\end{proposition}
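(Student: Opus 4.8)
The plan is to exhibit the map $\Sigma\mapsto \tilde{Z}(\Sigma\times[\underline{0},1])$ sketched above as $\tilde{Z}$ applied to a single ``cylinder'' natural transformation between $\infty$-functors out of $\mathrm{Cob}_k^\infty(n)$, and then to identify the source and target of the result using the defining square of the boundary condition. Write $\iota\colon \mathrm{Cob}_k^\infty(n)\hookrightarrow \mathrm{Cob}_{k+1}^\infty(n+1)\xrightarrow{i}\mathrm{Cob}_{k+1}^{\partial,\infty}(n+1)$ for the composite of the codimension-shift embedding of Section \ref{cobordism} with the embedding $i$ appearing in the boundary-condition diagram above, and write $\underline{\emptyset}\colon \mathrm{Cob}_k^\infty(n)\to \mathrm{Cob}_{k+1}^{\partial,\infty}(n+1)$ for the constant functor at the monoidal unit $\emptyset$.

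First I would construct a symmetric monoidal natural transformation $\mathfrak{c}\colon \underline{\emptyset}\Rightarrow \iota$ whose component at an object $\Sigma$ is the $1$-morphism $\Sigma\times[\underline{0},1]\colon \emptyset\to\Sigma$, whose component at an $m$-morphism $M$ is the $(m+1)$-morphism $M\times[\underline{0},1]$ filling the corresponding naturality cell, and which continues in the same fashion with diffeomorphisms crossed with $\mathrm{id}_{[\underline{0},1]}$ as the data on $(k+1)$-morphisms, isotopies on $(k+2)$-morphisms, and so on. That these data assemble into a genuine natural transformation which is moreover compatible with disjoint unions is, level by level, the same geometric verification already carried out in Section \ref{cobordism} to show that the mapping cylinder $i$ is a symmetric monoidal $\infty$-functor: every structure cell is itself a product with $[\underline{0},1]$, and the required coherences hold because $(-)\times[\underline{0},1]$ commutes with the gluing of cobordisms up to canonical diffeomorphism. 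This step is the only genuinely higher-categorical input, and I expect it to be the main obstacle: as with the functoriality of $i$ in Section \ref{cobordism}, the geometry is transparent but a fully rigorous argument requires the calculus of lax natural transformations between symmetric monoidal $\infty$-functors assumed in the introduction.

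Finally I would apply $\tilde{Z}$ to $\mathfrak{c}$ (i.e.\ whisker on the codomain side), obtaining a symmetric monoidal natural transformation $\tilde{Z}_{W^Z}\colon \tilde{Z}\circ\underline{\emptyset}\Rightarrow \tilde{Z}\circ\iota$, and identify its endpoints. Since $\tilde{Z}$ is symmetric monoidal it carries the unit $\emptyset$ to the monoidal unit $k\text{-}\mathrm{Vect}$ of $(k+1)\text{-}\mathrm{Vect}$ and carries identities to identities, so $\tilde{Z}\circ\underline{\emptyset}$ is literally the trivial functor $\underline{1}$ of Definition \ref{def-anomaly}. On the other side, the commutativity of the boundary-condition diagram gives $\tilde{Z}\circ i=Z$ as functors $\mathrm{Cob}_{k+1}^\infty(n+1)\to (k+1)\text{-}\mathrm{Vect}$, and restricting along $\mathrm{Cob}_k^\infty(n)\hookrightarrow \mathrm{Cob}_{k+1}^\infty(n+1)$ identifies $\tilde{Z}\circ\iota$ with $W^Z$ exactly as in Remark \ref{restriction}. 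Hence $\tilde{Z}_{W^Z}$ is a morphism $\underline{1}\to W^Z$ in $\mathrm{Fun}^{\otimes}(\mathrm{Cob}_k^\infty(n),(k+1)\text{-}\mathrm{Vect})$, that is, precisely an $n$-dimensional anomalous TQFT with anomaly $W^Z$ in the sense of Definition \ref{def-anomaly}, which is the assertion of the proposition.
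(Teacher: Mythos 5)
Your proposal is correct and follows essentially the same route as the paper: the anomalous theory is obtained by crossing with the constrained interval $[\underline{0},1]$, applying $\tilde{Z}$, and identifying the source with $\underline{1}$ and the target with $W^Z$ via the boundary-condition square and the restriction of $Z$ along $\mathrm{Cob}_k^\infty(n)\hookrightarrow\mathrm{Cob}_{k+1}^\infty(n+1)$. The only (cosmetic) difference is that you first package the cylinder data as a natural transformation $\mathfrak{c}\colon\underline{\emptyset}\Rightarrow\iota$ inside the cobordism category and then whisker with $\tilde{Z}$, whereas the paper describes the images $\tilde{Z}(-\times[\underline{0},1])$ directly, level by level; the geometric verifications required are identical.
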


 The above argument shows that we have a ``forgetful map''
 \begin{equation}
 \{\text{boundary conditions on invertible TQFTs}\}\rightsquigarrow \{\text{anomalous TQFTs}\}
 \end{equation}
 In general, we do not expect the converse to hold. Namely, an anomalous TQFT with anomaly $W$ contains too little information to determine a boundary condition $\tilde{Z}$. %Indeed, it is even possible that $W$ does not extend at all to an invertible $(n+1)$-dimensional TQFT. 
Neverthless, in the case of fully extended TQFTs the situation is rather different.
\begin{remark}
The procedure of taking ``cartesian products'' with the constrained interval can be seen as a form of dimensional reduction for manifolds with boundaries. It is completely analogous to dimensional reduction over $S^{1}$, which allows to obtain a $n-1$-dimensional extended TQFT from an $n$-dimensional one, preserving the tiers of extension.  
\end{remark} 
\subsection{Boundary conditions for fully extended TQFTs} For simplicity, in the following we will consider the framed case. Let $Z$ be a $(n+1)$-dimensional fully extended invertible TQFT, namely an $\infty$-functor $Z: \mathrm{Bord}^{fr}(n+1) \to (n+1)\text{-}\mathrm{Vect}$ which factors through $\mathrm{Pic}((n+1)\text{-}\mathrm{Vect})$. As mentioned in Remark \ref{restriction}, from $Z$ we obtain an $n$-character $W^{Z}$. Let $Z_{W^{Z}}$ be an anomalous TQFT with anomaly $W^{Z}$, namely a morphism $\underline{1}\to{W^{Z}}$, which contains in particular the datum of a 1-morphism
\begin{equation}
{n\mbox{-}{\rm Vect}} \to W^{Z}({\rm pt}^{+})=Z({\rm pt}^{+})
\end{equation}
By Theorem \ref{luriehopkins2}, we have then that $Z_{W}$ induces a boundary condition $\tilde{Z}$ of $Z$, and an equivalence
\begin{equation}
Z_{W^{Z}}\simeq\tilde{Z}_{W^{Z}}
\end{equation}
of 1-morphisms $\underline{1}\to{W^{Z}}$, where $\tilde{Z}_{W^{Z}}$ is the anomalous TQFT as from Proposition \ref{boundaryanomalous}. This argument can be assembled in the following
\begin{theorem}
Let $Z$ be a fully extended invertible $(n+1)$-dimensional TQFT. Any $n$-dimensional anomalous TQFT $Z_{W^{Z}}$ with respect to $W^{Z}$ gives rise to a boundary condition $\tilde{Z}$ of $Z$.
\end{theorem}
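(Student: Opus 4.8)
The plan is to extract from the anomalous theory $Z_{W^Z}$ only its value on the framed point, to feed this single $1$-morphism into the boundary version of the cobordism hypothesis (Theorem~\ref{luriehopkins2}) in order to manufacture $\tilde Z$, and finally to check that applying the dimensional-reduction construction of Proposition~\ref{boundaryanomalous} to $\tilde Z$ returns the anomalous theory one started from.

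First I would pin down the relevant point datum. Since we are in the framed fully extended case, $W^Z$ is (as in Remark~\ref{restriction}) the restriction of $Z$ along the mapping-cylinder embedding $\mathrm{Bord}^{fr}(n)\hookrightarrow \mathrm{Bord}^{fr}(n+1)$, so that $W^Z({\rm pt}^+)=Z({\rm pt}^+)$ as objects of $(n+1)\text{-}\mathrm{Vect}$, while $\underline 1({\rm pt}^+)$ is the monoidal unit $n\text{-}\mathrm{Vect}$ of $(n+1)\text{-}\mathrm{Vect}$. Evaluating the natural transformation $Z_{W^Z}\colon \underline 1\to W^Z$ at ${\rm pt}^+$ therefore yields a $1$-morphism $\zeta\colon n\text{-}\mathrm{Vect}\to Z({\rm pt}^+)$ in $(n+1)\text{-}\mathrm{Vect}$, that is, an object of $\mathrm{Hom}_{(n+1)\text{-}\mathrm{Vect}}(n\text{-}\mathrm{Vect},Z({\rm pt}^+))\simeq Z({\rm pt}^+)$.

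Next, by Theorem~\ref{luriehopkins2} applied to the fully extended invertible theory $Z$, this datum $\zeta$ is precisely what classifies a (left) boundary condition: there is a symmetric monoidal extension $\tilde Z\colon \mathrm{Bord}^{\partial,fr}(n+1)\to (n+1)\text{-}\mathrm{Vect}$ of $Z$, unique up to equivalence, whose value on the decorated interval $[\underline{0},1]$ of Example~\ref{example-sguinci} is $\zeta$; no dualizability hypothesis on $\zeta$ is needed, since \emph{every} object of $Z({\rm pt}^+)$ arises in this way. This already produces the asserted boundary condition $\tilde Z$. Note that $Z_{W^Z}$ carries strictly more data than $\zeta$ --- values on all framed $n$-manifolds, on mapping cylinders of diffeomorphisms, on isotopies, and so on --- so this step genuinely forgets information; recovering it is the content of the last step.

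Finally I would verify that nothing was actually lost, i.e. that $\tilde Z_{W^Z}\simeq Z_{W^Z}$ as morphisms $\underline 1\to W^Z$, where $\tilde Z_{W^Z}=\tilde Z(-\times[\underline{0},1])$ is the anomalous theory produced by Proposition~\ref{boundaryanomalous}. Evaluating $\tilde Z_{W^Z}$ at ${\rm pt}^+$ gives $\tilde Z({\rm pt}^+\times[\underline{0},1])=\tilde Z([\underline{0},1])=\zeta$, so the two morphisms agree on the framed point. The hard part will be concluding from this that they are equivalent: this is a uniqueness-from-the-point statement for \emph{morphisms} of fully extended moduli level $1$ theories, which in full generality is the Cobordism Hypothesis for TQFTs with moduli level $1$ conjectured above and hence not available as a black box. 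In the case at hand, however, I expect it can be obtained directly from the naturality of the equivalence in Theorem~\ref{luriehopkins2}, which intertwines ``evaluation on the decorated interval'' (used to build $\tilde Z$) with ``product with $[\underline{0},1]$'' (used to build $\tilde Z_{W^Z}$), together with the compatibility of the mapping-cylinder embedding $\mathrm{Bord}^{fr}(n)\hookrightarrow\mathrm{Bord}^{fr}(n+1)$ with taking cartesian products with the constrained interval; carrying out that compatibility check is the main technical obstacle.
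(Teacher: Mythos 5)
Your proposal follows essentially the same route as the paper: evaluate the natural transformation $Z_{W^Z}\colon\underline{1}\to W^Z$ on the framed point to obtain a $1$-morphism $n\text{-}\mathrm{Vect}\to Z(\mathrm{pt}^+)$, and invoke Theorem~\ref{luriehopkins2} to identify this datum with a boundary condition $\tilde Z$, together with the equivalence $Z_{W^Z}\simeq\tilde Z_{W^Z}$. If anything you are more explicit than the paper about the one genuinely delicate point --- that recovering the full equivalence $Z_{W^Z}\simeq\tilde Z_{W^Z}$ from agreement on the point is a ``determined by the point'' statement for morphisms of moduli level~$1$ theories, which the paper simply asserts as a consequence of Theorem~\ref{luriehopkins2} without spelling out the naturality argument you correctly identify as the missing technical step.
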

Hence in the fully extended case, an anomalous TQFT with respect to an anomaly obtained by restriction of a higher dimensional TQFT $Z$ contains enough information to allow $Z$ to be extended on manifolds with boundaries.\\ 
\medskip

We conclude this section with an observation we find intriguing. In \cite{fhlt} a 4-category with duals ${\rm Braid}^{\otimes}$ of braided tensor categories has been introduced, as follows:
\begin{itemize}
\item[-] objects are given by braided tensor categories $\mathcal{C}$;
\item[-] 1-morphisms between $\mathcal{C}$ and $\mathcal{D}$  are pairs $(\mathcal{A}, q)$, with $\mathcal{A}$ a fusion category, and $q$ a braided functor 
$\mathcal{C}^{\mathrm{op}}\boxtimes{\mathcal{D}}\to{\mathcal{Z}(\mathcal{A})}$, where $\mathcal{Z}(\mathcal{A})$ is the Drinfel'd centre of $\mathcal{A}$;
\item[-] 2-morphisms are $\mathcal{A}$-$\mathcal{B}$ bimodules $M$;
\item[-] 3-morphisms are bimodule functors;
\item[-] 4-morphisms are bimodule natural transformations;
\end{itemize}
Recently \cite{freed4}, the invertible objects in ${\rm Braid}^{\otimes}$ have been investigated: they are exactly the modular tensor categories. They are also fully dualizable. Let then $\mathcal{C}$ be a modular tensor category, and consider the invertible fully extended 4-dimensional TQFT $Z$ induced by $\mathcal{C}$. Also, let $(\mathcal{A}, q)$ be a 1-morphism from $\mathrm{Vect}$ (i.e., from the monoidal unit of ${\rm Braid}^{\otimes}$) to
$\mathcal{C}$, i.e., let $q$ be a braided functor $\mathcal{C}\cong \mathrm{Vect}^{\mathrm{op}}\boxtimes{\mathcal{C}} \to \mathcal{Z}(\mathcal{A})$ for some fusion category $\mathcal{A}$. By the results above\footnote{In the main body of the paper we have been considering only $n$-Vect as a target for a TQFT. The constructions presented there generalise to an arbitrary symmetric monoidal $(\infty,n)$-category  with duals $\mathcal{C}$ as a target, see \cite{lurie}. More precisely, when $\mathcal{C}$ takes the role of $n$-Vect, then $\Omega\mathcal{C}$ takes the role of $(n-1)$-Vect, and so on, down to $\Omega^n\mathcal{C}$ taking the role of the base field $\mathbb{K}$. In particular, it is meaningful to have the symmetric monoidal 4-category ${\rm Braid}^{\otimes}$ as a target, as we are doing here.}, to $(\mathcal{A}, q)$ there corresponds a boundary condition $\tilde{Z}$ of $Z$, and consequently a fully extended 3-dimensional anomalous theory with respect to $W^{Z}$ with values in $\Omega{\rm Braid}^{\otimes}$. We will denote with $Z^{(\mathcal{A},q)}$ this anomalous theory. Notice that if we apply the loop operator to the morphism $Z^{(\mathcal{A},q)}$ we obtain a 3-dimensional anomalous TQFT extended up to codimension 2 with values in $\Omega^{2}{\rm Braid}^{\otimes}\simeq 2\mbox{-}{\rm Vect}$.\\
On the other hand, given a modular tensor category $\mathcal{C}$, the Reshetikhin-Turaev construction also produces an anomalous 3-dimensional TQFT extended up to codimension 2, which we denote by $Z^{\rm RT}_{\mathcal{C}}$. It is very tempting then to state the following    
\begin{conjecture}\label{conject}
Let $\mathcal{C}$ be a modular tensor category. 
Then, any isomorphism $(\mathcal{A}, q)$ between $\mathrm{Vect}$ and $\mathcal{C}$ in ${\rm Braid}^{\otimes}$, i.e., any equivalence $q\colon\mathcal{C}\to{\mathcal{Z}(\mathcal{A})}$, induces a natural equivalence
%Given a 1-morphism $(\mathcal{A}, q)$ between $\mathrm{Vect}$ and $\mathcal{C}$ in ${\rm Braid}^{\otimes}$, we have an equivalence
%%
\begin{equation}
Z^{\rm RT}_{\mathcal{C}}\simeq\Omega({Z^{(\mathcal{A},q)}}).
\end{equation}
%%
%if and only if $q\colon\mathcal{C}\to{\mathcal{Z}(\mathcal{A})}$ is an equivalence.
%
%%
%%
\end{conjecture}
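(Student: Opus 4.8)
\medskip
\noindent\textbf{A strategy towards a proof of Conjecture \ref{conject}.} The plan is to recognise both $Z^{\rm RT}_\mathcal{C}$ and $\Omega(Z^{(\mathcal{A},q)})$ as once-extended (i.e.\ $k=2$) anomalous $3$-dimensional TQFTs valued in $2\mbox{-}{\rm Vect}$ carrying the \emph{same} anomaly, and then to identify them by an explicit dimensional-reduction computation of the second one. A preliminary observation delimits the scope of the statement: the existence of an \emph{isomorphism} $(\mathcal{A},q)\colon\mathrm{Vect}\to\mathcal{C}$ in ${\rm Braid}^\otimes$, i.e.\ of a braided equivalence $q\colon\mathcal{C}\xrightarrow{\sim}\mathcal{Z}(\mathcal{A})$, forces $\mathcal{C}$ to be trivial in the Witt group of nondegenerate braided fusion categories (equivalently, a Drinfel'd centre), so that in particular $\mathcal{C}$ has trivial Reshetikhin--Turaev anomaly. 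Hence, by the lemma following Definition \ref{def-anomaly}, once this anomaly is identified both sides of the claimed equivalence reduce to \emph{ordinary} once-extended $3$-dimensional TQFTs, and the conjecture becomes a fully-extended refinement of the classical identification of the Turaev--Viro theory of $\mathcal{A}$ with the Reshetikhin--Turaev theory of $\mathcal{Z}(\mathcal{A})$.

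First I would pin down the anomaly $W^Z$ induced by $Z$ via truncation (Remark \ref{restriction}). By the analysis of the Crane--Yetter theory in \cite{walker} (see also \cite{craneyetter} and the recent \cite{freed4}), the invertible $4$-dimensional theory $Z$ attached to a modular tensor category $\mathcal{C}$, restricted to codimension $\le 2$, is the invertible theory governed by the Gauss sum, equivalently the central charge $c(\mathcal{C})$ modulo $8$, of $\mathcal{C}$; and this is exactly the framing anomaly appearing in the Reshetikhin--Turaev construction \cite{turaev,bakalovkirillov}. Thus $W^Z$ is (equivalent to) the Reshetikhin--Turaev anomaly, which is moreover trivial in the case at hand, and in particular it agrees with the anomaly of $Z^{\rm RT}_\mathcal{C}$. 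Consequently $Z^{\rm RT}_\mathcal{C}$ and $\Omega(Z^{(\mathcal{A},q)})$ are morphisms $\underline{1}\to W^Z$ of the same type and can be compared in the $(\infty,1)$-category of such morphisms.

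Next I would compute $\Omega(Z^{(\mathcal{A},q)})$ by unwinding the constrained dimensional reduction. By Theorem \ref{luriehopkins2} the boundary condition $\tilde Z$ of $Z$ attached to the $1$-morphism $(\mathcal{A},q)$ is determined by its value $\tilde Z([\underline{0},1])=(\mathcal{A},q)$ on the decorated interval, and by Proposition \ref{boundaryanomalous} the induced anomalous theory is $Z^{(\mathcal{A},q)}=\tilde Z(-\times[\underline{0},1])$. Looping collapses the $[\underline{0},1]$-direction; since $S^1$ lies in the closed sector, $\Omega(Z^{(\mathcal{A},q)})(S^1)\simeq Z(S^1)\simeq\mathcal{Z}(\mathcal{A})\simeq\mathcal{C}$, which is the value of $Z^{\rm RT}_\mathcal{C}$ on the circle. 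For a closed surface $\Sigma$, the constrained bordism $\Sigma\times[\underline{0},1]$ is assembled out of $\mathcal{A}$-decorated cells exactly as in the Turaev--Viro state sum, so $\Omega(Z^{(\mathcal{A},q)})(\Sigma)$ should be the Turaev--Viro vector space of $\mathcal{A}$ attached to $\Sigma$, and the $3$-dimensional bordisms are sent to the corresponding Turaev--Viro state-sum invariants; this is the ``gapped boundary of Crane--Yetter is Turaev--Viro'' picture of \cite{walker}, compatibly with \cite{fsv1,fsv2}. By the Balsam--Kirillov and Turaev--Virelizier comparison, the Turaev--Viro theory of $\mathcal{A}$ coincides, as a once-extended $3$-dimensional TQFT, with $Z^{\rm RT}_{\mathcal{Z}(\mathcal{A})}\simeq Z^{\rm RT}_\mathcal{C}$; this yields the sought equivalence on objects, $1$-morphisms and $2$-morphisms.

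The remaining task is to upgrade these level-by-level matches to an honest natural equivalence of symmetric monoidal functors, and this is where I expect the main difficulty to be. Two points are delicate. First, one must make rigorous the operation ``dimensional reduction of a fully extended relative theory along the constrained interval'' at the level of symmetric monoidal $(\infty,n)$-categories; this requires Theorem \ref{luriehopkins2} in full strength --- the cobordism hypothesis for manifolds with singularities of \cite{lurie} --- together with explicit control of the lax and monoidal structure of $\tilde Z(-\times[\underline{0},1])$. Second, one must identify the looped boundary theory with the Turaev--Viro theory of $\mathcal{A}$ \emph{as a functor} on all of $\mathrm{Cob}_2^\infty(3)$, not merely on a chosen set of generators --- in effect reproving the Turaev--Virelizier and Balsam--Kirillov theorem within this formalism, or deducing it from a uniqueness (generators-and-relations) statement for once-extended anomalous $3$-dimensional TQFTs with prescribed anomaly and prescribed modular value on $S^1$ --- and checking that the resulting equivalence does not depend on the chosen Morita ``square root'' $\mathcal{A}$ of $\mathcal{C}$. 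I expect the second point to be the genuine obstacle: the objectwise data are essentially forced, but the coherence witnessing $\Omega(Z^{(\mathcal{A},q)})\simeq Z^{\rm RT}_\mathcal{C}$ is precisely the kind of statement whose clean formulation awaits a fully robust theory of lax transformations between symmetric monoidal $(\infty,n)$-functors, which is why it is offered here as a conjecture rather than a theorem.
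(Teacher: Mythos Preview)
The paper does not prove this statement: it is explicitly presented as a conjecture, offered as ``very tempting'' and supported only by the remark that it is ``compatible with findings in \cite{fsv1}''. There is therefore no proof in the paper against which to compare your proposal.

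Your proposal is not a proof either, and you are honest about this: you label it a ``strategy'' and end by explaining why the statement ``is offered here as a conjecture rather than a theorem''. As a strategy it is sensible and goes well beyond what the paper provides. Your key reduction---that an \emph{isomorphism} $(\mathcal{A},q)$ in ${\rm Braid}^\otimes$ forces $\mathcal{C}\simeq\mathcal{Z}(\mathcal{A})$ to be Witt-trivial, hence anomaly-free, so that the conjecture collapses to an extended Turaev--Viro\,$\simeq$\,Reshetikhin--Turaev statement---is a genuine observation not made explicit in the paper, and it correctly delimits the scope of the conjecture as stated. Your identification of the remaining obstacles (making the constrained dimensional reduction rigorous at the level of symmetric monoidal $(\infty,n)$-functors, and promoting the Balsam--Kirillov/Turaev--Virelizier comparison to an equivalence of once-extended anomalous theories) is accurate.

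One small caution: the line ``since $S^1$ lies in the closed sector, $\Omega(Z^{(\mathcal{A},q)})(S^1)\simeq Z(S^1)$'' is too quick. The left-hand side is a component of a natural transformation $\underline{1}\to W^Z$, hence an \emph{object} of $W^Z(S^1)$, not the category $Z(S^1)$ itself; what you need is rather $\tilde{Z}(S^1\times[\underline{0},1])$, and identifying this with (an object equivalent to) $\mathcal{C}$ requires an argument, not just the observation that $S^1$ has no constrained boundary. This is exactly the kind of step that would be absorbed into the ``dimensional reduction as a symmetric monoidal operation'' machinery you flag as missing, but as written the sentence conflates the anomaly functor with the anomalous transformation.
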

The conjecture above is compatible with findings in \cite{fsv1}, which studies obstructions to the existence of boundary conditions for Reshetikhin-Turaev TQFTs.
\begin{remark}
In Conjecture \ref{conject}, Reshetikhin-Turaev TQFT is regarded as an anomalous theory with respect to the 4-dimensional Crane-Yetter theory, i.e. a natural transformation of (higher) functors, rather than a functor on a central extension of ${\rm Cob}^{or}_{2}(3)$. In other words, we trade the additional structures on 1-, 2-, and 3-manifolds needed to define Reshetikhin-Turaev TQFT as functors, as for instance in \cite{turaev,walker}, with looking at them as natural transformations.
\end{remark}
\subsection{Further applications and outlook} An interesting playground to test and apply the language and results developed in this article is provided by the quantisation of classical Lagrangian field theories, as in \cite{fhlt,morton,nuiten}. In this case the TQFT is obtained via a linearisation of the (higher) stack of classical fields over $\infty$-categories of groupoid correspondences: we expect therefore the anomalous theory to retain some ``classical'' properties concerning the anomaly. A particularly amenable situation is given by (higher) Dijkgraaf-Witten theories: indeed, in this case we expect to reproduce the results obtained in \cite{fsv2} in 3-dimensions, which would provide a purely quantum field theoretic support to the ansatz therein proposed.\\
On a closely related topic, we remark that there is a version of the cobordism hypothesis to incorporate \emph{defects} between fully extended TQFTs. Indeed, a boundary condition for $Z$ as presented in this article can be regarded as a defect between the trivial theory and $Z$. One can then investigate morphisms between two arbitrary $n$-dimensional TQFTs of moduli level $m$, with $m>0$: we expect the structure involved in this case to be richer than the case $m=0$,  where the $(\infty, n-1)$-category of morphisms forms a groupoid.  

\begin{thebibliography}{99}
\bibliographystyle{alpha} 

\bibitem{alexeevski-natanzon}{A. Alexeevski and S.Natanzon \emph{Noncommutative two-dimensional topological field theories
and Hurwitz numbers for real algebraic curves},
Seleta Math., New ser. v.12, n.3, (2006), 307-377 [arXiv:math.GT/0202164]}

\bibitem{au}{J.E. Andersen and K. Ueno, \emph{Modular functors are determined by
their genus zero data}, Quantum Topology {\bf 3}, (2012), 255-291, [arXiv:math/0611087]}

\bibitem{au2}{J.E. Andersen and K. Ueno, \emph{Construction of the Witten-Reshetikhin-Turaev TQFT from Conformal Field Theory},  Inventiones Mathematicae, (DOI)  10.1007/s00222-014-0555-7, [arXiv:1110.5027]}

\bibitem{atiyah}{M. Atiyah, \emph{Topological quantum field theories}, Inst. Hautes \'Etudes Sci. Publ. Math., 68 (1989), 175-186}

\bibitem{bakalovkirillov}{B. Bakalov and A. Kirillov, Jr. \emph{Lectures on tensor categories and modular functors.} University Lecture Series, 21. American Mathematical Society, Providence, RI, 2001.}

\bibitem{baezdolan}{J. Baez and J. Dolan, \emph{Higher-dimensional algebra and topological quantum field theory}, J. Math. Phys. 36 (1995), 6073-05}

\bibitem{barwick}{C. Barwick, \emph{$(\infty, n)$-Cat as a closed model category}, PhD Thesis, University of Pennsylvania (2005). Available from ProQuest. Paper AAI3165639, http://repository.upenn.edu/dissertations/AAI3165639}

\bibitem{barwick-schommer-pries}{C. Barwick and C. Schommer-Pries \emph{On the Unicity of the Homotopy Theory of Higher Categories}, [arXiv:1112.0040]}

\bibitem{benabou}{J. B\'enabou, \emph{Introduction to bicategories}, Springer-Verlag, Lecture Notes in Mathematics, Volume 47, 1967}

\bibitem{cmr}{A. Cattaneo, P. Mnev, and N. Reshetikhin, \emph{Classical and Quantum Lagrangian Field Theories with Boundary}, arXiv:1207.0239}

\bibitem{craneyetter} {L. Crane and D. Yetter, \emph{A Categorical construction of 4-D topological quantum field theories}, In Dayton 1992, Proceedings, Quantum topology,  120-130, [arXiv: hep-th/9301062]}
  
\bibitem{deligne}{P. Deligne, \emph{Cat\'egories tannakiennes}. The Grothendieck Festschrift, Vol. II, 111--195, Progr. Math., 87, Birkh\"auser Boston, Boston, MA, 1990.} 

\bibitem{dsps}{C. Douglas, C. Schommer-Pries, N. Snyder, \emph{Dualizable tensor categories}, [arXiv:1312.7188]}

\bibitem{eno}{P. Etingof, D. Nikshych, and V. Ostrik, \emph{Fusion categories and homotopy theory}, Quantum Topology, 1(2010), 209-273, [arXiv:0909.3140]}

\bibitem{fss}{D. Fiorenza, H. Sati, and U. Schreiber, \emph{Super Lie n-algebra extensions, higher WZW models, and super p-branes with tensor multiplet fields}, to appear in International
      Journal of Geometric Methods in Modern Physics, [arXiv:1308.5264]}

%\bibitem{fiorenza-schreiber-valentino}{D. Fiorenza, U. Schreiber and A. Valentino} \emph{Central extensions of mapping class groups from characteristic classes}, in preparation.

\bibitem{freed1}{D. Freed, \emph{Anomalies and Invertible Field Theories}, [arXiv:1404.7224]}

\bibitem{freed2}{D. Freed, \emph{Short-range entanglement and invertible field theories}, [arXiv:1406.7278]}

\bibitem{freed3}{D.~Freed, \emph{Bordism: old and new}, Lecture 15, available at http://www.ma.utexas.edu/users/dafr/M392C/index.html}

\bibitem{freed4}{D.~Freed, \emph{4-3-2 8-7-6}, talk at ASPECTS of Topology, 2012, available at https://people.maths.ox.ac.uk/tillmann/ASPECTSfreed.pdf}

\bibitem{fhlt}{D. Freed, M. Hopkins, J. Lurie and C. Teleman, \emph{Topological quantum field theories from compact Lie groups}, A celebration of the mathematical legacy of Raoul Bott (2009), 367-403, [arXiv:0905.0731]} 

\bibitem{fpsv}{J. Fuchs, J. Priel, C. Schweigert, and A. Valentino, \emph{On the Brauer groups of symmetries of abelian Dijkgraaf-Witten theories}, [arXiv:1404.6646]}


\bibitem{fsv1}{ J. Fuchs, C. Schweigert, and A. Valentino, \emph{Bicategories for boundary conditions and for surface defects in 3-d TFT}, Comm. Math. Phys. 321 (2013), no. 2, 543-575, [arXiv:1203.4568]} 

\bibitem{fsv2}{J. Fuchs, C. Schweigert, and A. Valentino, \emph{A geometric approach to boundaries and surface defects in Dijkgraaf-Witten theories}, Comm. Math. Phys.  332 (2014), no. 3,  981-1015, [arXiv:1307.3632]}

\bibitem{ft}{D. Freed and C. Teleman, \emph{Relative quantum field theory}, Comm. Math. Phys. 
326 (2014), no. 2 , 459-476, [arXiv:1212.1692]}

\bibitem{kapustin}{A. Kapustin, \emph{Topological Field Theories, Higher Categories and Their Applications}, Proc. Intern. Congr. Math. (2010), [arXiv:1004.2307]}

\bibitem{kap}{A. Kapustin, \emph{Symmetry Protected Topological Phases, Anomalies, and Cobordisms: Beyond Group
Cohomology}, [arXiv:1403.0617]},

\bibitem{ks1}{A. Kapustin and N. Saulina, \emph{Topological boundary conditions in abelian Chern-Simons theory}, Nucl. Phys. B845 (2011), 393-435, [arXiv:1008.0654]}

\bibitem{kt}{A. Kapustin and R. Thorngren, \emph{Anomalies of discrete symmetries in three dimensions and group
cohomology}, [arXiv:1404.3230]}

\bibitem{kv}{M. Kapranov and V. Voevodsky, \emph{2-categories and Zamolodchikov tetrahedra equations in Algebraic groups and their generalization: quantum and infinite-dimensional methods}, Proc. Sympos. Pure Math. 56 (1994), 177-259}

\bibitem{kock}{J. Kock, \emph{Frobenius algebras and 2D topological quantum field theories}. London Mathematical Society Student Texts, 59. Cambridge University Press, Cambridge, 2004.}

\bibitem{kriz}{I. Kriz, \emph{On spin and modularity in conformal field theory}, Annales scientifiques de l'\'Ecole Normale Sup\'erieure, S\'er. 4, 36 no. 1 (2003), p. 57-112, [numdam:ASENS\_2003\_4\_36\_1\_57\_0]}

\bibitem{laudapfeiffer}{A. D. Lauda and H. Pfeiffer, \emph{Open-closed strings: two-dimensional extended TQFTs and Frobenius algebras}. Topology Appl. 155 (2008), no. 7, 623--666.}

\bibitem{lazaroiu}{C. I. Lazaroiu, \emph{On the structure of open - closed topological field theory in two-dimensions}.  Nucl.Phys. B603 (2001) 497-530, [hep-th/0010269]}
 
\bibitem{lee}{G. Lee, \emph{The Classification of Two Dimensional topological Field Theories}, [arXiv:1403.7578]}

\bibitem{lurie}{J. Lurie, \emph{On the Classification of Topological Field Theories}, Current Developments in Mathematics Volume 2008 (2009), 129-280, [arXiv:0905.0465]}

\bibitem{lurie2}{J. Lurie, \emph{Higher Topos Theory}, Annals of Mathematics Studies, 2009}
 
\bibitem{mooresegal}{G. W. Moore and G. Segal, \emph{D-branes and K-theory in 2D topological field theory}, [arXiv: hep-th/0609042]} 

\bibitem{morton}{J. Morton, \emph{Cohomological twisting of 2-linearization and extended TQFT}, J. of Homotopy and Related Structures, DOI: 10.1007/s40062-013-0047-2, [arXiv:1003.5603v4]}

\bibitem{nuiten}{J Niuten, \emph{Cohomological quantization of local prequantum boundary field theory}, MSc thesis, Utrecht, 2013, available at http://ncatlab.org/schreiber/files/thesisNuiten.pdf}

\bibitem{rt}{N. Reshetikhin and V. Turaev, \emph{Invariants of 3-manifolds via link polynomials and quantum groups}, Invent. Math. 103 (1991), 547-597}

\bibitem{scheimbauer} C. I. Scheimbauer, \emph{Factorization Homology as a Fully Extended Topological Field Theory}, PhD Thesis, ETH (2014), Diss. ETH No. 22130.

\bibitem{schom}{C. Schommer-Pries, \emph{The Classification of Two-Dimensional Extended Topological Field Theories}, [arXiv:1112.1000]}

\bibitem{schreiber-aqft}{U. Schreiber, \emph{AQFT from n-Functorial QFT}, Comm. Math. Phys. 291 (2009), no. 2, 357-401, [arXiv:0806.1079]}


\bibitem{segal}{G. Segal,\emph{
The definition of conformal field theory} , in Differential geometrical methods in theoretical physics (Como, 1987), NATO Adv. Sci. Inst. Ser. C Math. Phys. Sci., 250, Kluwer Acad. Publ., Dordrecht, (1988), 165-171}

\bibitem{stolz-teichner}{S. Stolz and P. Teichner, \emph{Supersymmetric field theories and generalized cohomology}, Available at http://www3.nd.edu/~stolz/survey\_2011.pdf}

\bibitem{turaev}{V. Turaev, \emph{Quantum invariants of knots and 3-manifolds}, W. de Gruyter, Berlin, 1994}

\bibitem{tuvir}{G. Turaev and O. Viro, \emph{State sum invariants of 3-manifolds and quantum 6j-symbols}, Topology 31 (1992), 865-902}

\bibitem{walker}{K. Walker,\emph{TQFTs}, Available at http://canyon23.net/math/}
%\bibitem{walker2}{K. Walker,\emph{TQFTs}, Available at http://canyon23.net/math/}
%%
\bibitem{wgw}{J. Wang, Z. Gu, X. Wen, \emph{Field theory representation of gauge-gravity symmetry-protected topological invariants, group cohomology and beyond}, [arXiv:1405.7689]}
%%
\bibitem{witten}{E. Witten, \emph{Fivebranes and Knots}, Quant. topol. 3 (2012), 1-137, arXiv:1101.3216.}
%%
%%
\end{thebibliography}
\end{document}